\pgfplotsset{compat=1.15}
\newcommand\e\varepsilon
\newcommand\R{\mathbb R}
\newcommand\de\partial
\newcommand\weakto\rightharpoonup
\renewcommand\le\leqslant
\renewcommand\ge\geqslant
\renewcommand\a\alpha
\renewcommand\b\beta
\renewcommand\d\delta
\newcommand\vfi\varphi
\newcommand\g\gamma
\newcommand\gb\gamma
\renewcommand\l\lambda
\newcommand\n\nabla
\newcommand\s\sigma
\renewcommand\t\theta
\renewcommand\O\S
\newcommand\G\Gamma
\renewcommand\S\Sigma
\renewcommand\L\Lambda
\renewcommand\o\S
\def\bbm[#1]{\text{\boldmath $#1$}}
\newcommand\beq{\begin{equation}}
\newcommand\eeq{\end{equation}}
\renewcommand\leq{\leqslant}
\newtheorem{theorem}{Theorem}[section]
\newtheorem{lemma}[theorem]{Lemma}
\newtheorem{definition}[theorem]{Definition}
\newtheorem{proposition}[theorem]{Proposition}
\newtheorem{remark}[theorem]{Remark}
\def\sideremark#1{\ifvmode\leavevmode\fi\vadjust{\vbox to0pt{\vss
\hbox to0pt{\hskip\hsize\hskip1em
\vbox{\hsize3cm\tiny\raggedright\pretolerance10000
\noindent #1\hfill}\hss}\vbox to8pt{\vfil}\vss}}}
\definecolor{edu}{rgb}{0,0,1}
\numberwithin{equation}{section}
\title[Bound and ground states of  double critical ``NLS-KdV" systems]
{Bound and ground states of coupled ``NLS-KdV" equations with Hardy potential and critical power}
\keywords{Systems of elliptic equations, Variational methods, Ground states, Bound states, Compactness principles, Critical Sobolev, Hardy Potential, Doubly critical problems.}%
\subjclass[2010]{Primary  35J47, 35J50, 35J60,  35Q53, 35Q55}
\author{Eduardo Colorado, Rafael L\'opez-Soriano, Alejandro Ortega}
\email[Eduardo Colorado ]{ecolorad@math.uc3m.es}%
\email[Rafael López-Soriano]{ralopezs@math.uc3m.es}%
\email[Alejandro Ortega ]{alortega@math.uc3m.es}
\address[E. Colorado, R. López-Soriano, A. Ortega]{Departamento de Matem\'aticas,
Universidad Carlos III de Madrid, Av. Universidad 30, 28911 Legan\'es (Madrid), Spain}
\begin{document}
\maketitle


\begin{center}
{\it   Dedicated to Antonio Ambrosetti in memoriam}
\end{center}

\begin{abstract}
We consider the existence of bound and ground states for a family of nonlinear elliptic systems in $\R^N$, which involves equations with critical power nonlinearities and Hardy-type singular potentials. The equations are coupled by what we call ``Schr\"odinger-Korteweg-de Vries'' non-symmetric terms, which arise in some phenomena of fluid mechanics. By means of variational methods, ground states are derived for several ranges of the positive coupling parameter $\nu$. Moreover, by using min-max arguments, we seek bound states under some energy assumptions.
\end{abstract}

\

\section{Introduction}
\setcounter{equation}0

In this work we study a system of elliptic equations involving critical power nonlinearities and Hardy-type singular potentials, coupled by the so-called ``Schr\"odinger-Korteweg-de Vries'' non-symmetric terms. Precisely,
we consider the problem

\beq\label{system:SKdV}
\left\{\begin{array}{ll}
-\Delta u - \lambda_1 \frac{u}{|x|^2}-u^{2^*-1}= 2 \nu h(x) uv  &\text{in }\mathbb{R}^N,\vspace{.3cm}\\
-\Delta v - \lambda_2 \frac{v}{|x|^2}-v^{2^*-1}= \nu h(x) u^2 &\text{in }\mathbb{R}^N,\vspace{.3cm}\\
u,v> 0 & \text{in }\mathbb{R}^N\setminus\{0\},
\end{array}\right.
\eeq
where $h \in L^{\infty}(\mathbb{R}^N)$ a positive function, $\lambda_1,\lambda_2\in(0,\Lambda_N)$ with $\Lambda_N=\frac{(N-2)^2}{4}$ the Hardy critical constant, $2^*=\frac{2N}{N-2}$ the critical Sobolev exponent and the coupling parameter $\nu>0$. In addition, we will assume that $3\le N\le 6$.

\

In the last  years, both coupled Nonlinear Schr\"odinger (NLS for short) equations and coupled NLS-Korteweg-de Vries (NLS-KdV) equations, have been extensively studied, (cf., e.g., \cite{AC2, ACR, BW, LW, MMP,POMP, SIR} and \cite{Col, DFO} repectively, among others). Systems  of coupled NLS equations arise naturally in Optics and also in the Hartree-Fock theory for Bose-Einstein
condensates, among other physical phenomena.
The main studied systems of Schr\"odinger equations adopt the form of the vector Schr\"odinger equation, $\displaystyle i\bf{E}_t+\bf{E}_{xx}+{\nu}|\bf{E}|^2\bf{E}=\bf{0}$
where $i$, $\bf{E}$ denote the imaginary unit and the complex envelope of an electrical field respectively, and ${\nu}>0$ (the coupling parameter) is a normalization constant corresponding to the fact that the medium is self-focusing. Written in coordinates, these systems are of the form
\beq\label{original:SKdV}
\left\{\begin{array}{ll}
if_t+f_{xx}+|f|^2f+{2\nu} fg= 0  &\text{in }\mathbb{R}{\times(0,\infty)},\vspace{.3cm}\\
g_t+g_{xxx}+gg_x+{\nu}(|f|^2)_x= 0 &\text{in }\mathbb{R}{\times(0,\infty)},
\end{array}\right.
\eeq
where $f=f(x,t)\in \mathbb{C}$, $g=g(x,t)\in \mathbb{R}$, and $\nu\in \mathbb{R}$ denotes the real coupling coefficient. System \eqref{original:SKdV} modelize the interaction of short and long dispersive waves for instance the interaction of capillary-gravity water waves (cf. \cite{AA,CL,FO} and the references therein).
Looking for solitary ``traveling-wave" solutions
$f(x,t)=e^{iwt}e^{ikx}u_1(x-ct)$, $g(x,t)=u_2(x-ct)$, with $u_j\ge 0$  real functions, and choosing  $\lambda_1=k^2+w$, $\lambda_2=2k$, we get the system
\begin{equation}\label{edo}
\left\{\begin{array}{rcl}
-u_1'' +\lambda_1 u_1 & =  u_1^3+{2\nu} u_1u_2 &\text{in }\mathbb{R},\vspace{.3cm} \\
-u_2'' +\lambda_2 u_2 &  \mkern-15mu =  \frac 12 u_2^2+ {\nu} u_1^2 &\text{in }\mathbb{R},
\end{array}\right.
\end{equation}
where the nonlinear coupling terms are knonw as \textit{non-symmetric Schr\"odinger-Korteweg-de Vries--type coupling}. In what concerns Hamiltonian systems with singular potentials we refer to \cite{BVG,FPR}.\newline
On the other hand, systems like \eqref{system:SKdV} have been studied in \cite{AbFePe,ChenZou} with similar coupling terms:
\beq\label{system:SKdV2}
\left\{\begin{array}{ll}
-\Delta u - \lambda_1 \dfrac{u}{|x|^2}-u^{2^*-1}= \nu \alpha h(x) u^{\alpha-1}v^\beta   &\text{in }\mathbb{R}^N,\vspace{.3cm}\\
-\Delta v - \lambda_2 \dfrac{v}{|x|^2}-v^{2^*-1}= \nu \beta h(x) u^\alpha v^{\beta-1} &\text{in }\mathbb{R}^N,\vspace{.3cm}\\
u,v> 0 & \text{in }\mathbb{R}^N\setminus\{0\},
\end{array}\right.
\eeq
where $\alpha,\beta> 1$. {The authors have recently established new existence results for bound and ground states of \eqref{system:SKdV2}. These results complement the given ones along this paper. See \cite{CLO} for a complete picture of the solvability of system \eqref{system:SKdV2}.}

\

Along this work, we will focus on the existence of positive solutions to system \eqref{system:SKdV} which has a
``Schr\"odinger-Korteweg-de Vries'' nonlinear non-symmetric terms similar to the one comming from the NLS-KdV system
\eqref{edo}. To do so, we shall use variational methods. In particular, let us recall that solutions to  \eqref{system:SKdV} are critical points of the following energy functional
\begin{equation}\label{funct:SKdV}
\begin{split}
\mathcal{J}_\nu (u,v)=&\frac{1}{2} \int_{\mathbb{R}^N} \left( |\nabla u|^2 + |\nabla v|^2  \right) \, dx -\frac{\lambda_1}{2} \int_{\mathbb{R}^N} \dfrac{u^2}{|x|^2} \, dx  -\frac{\lambda_2}{2} \int_{\mathbb{R}^N} \dfrac{v^2}{|x|^2}dx\\
&- \frac{1}{2^*} \int_{\mathbb{R}^N} \left( |u|^{2^*} + |v|^{2^*}  \right) \, dx -\nu \int_{\mathbb{R}^N} h(x) u^2 v \, dx,
\end{split}
\end{equation}
defined in $\mathbb{D}=\mathcal{D}^{1,2} (\mathbb{R}^N)\times
\mathcal{D}^{1,2}(\mathbb{R}^N)$, where $\mathcal{D}^{1,2}(\mathbb{R}^N)$ is the completion of $C^{\infty}_0(\mathbb{R}^N)$ under the norm
\begin{equation*}
\|u\|_{\mathcal{D}^{1,2}(\mathbb{R}^N)}^2=\int_{\mathbb{R}^N} \, |\nabla u|^2  \, dx.
\end{equation*}

A main role in our analysis will be performed by the unique \textit{semi-trivial} solution. Let us stress that for any $\nu \in \mathbb{R}$, problem \eqref{system:SKdV} has the \textit{semi-trivial} positive solution $(0,z_2)$, with $z_2$ satisfying the next problem
$$
-\Delta z_2 - \lambda_2 \frac{z_2}{|x|^2}=z_2^{2^*-1} \qquad \mbox{ and } \qquad z_2>0 \qquad \mbox{ in } \mathbb{R}^N\setminus\{ 0\}.
$$
Also some properties of the \textit{semi-trivial} pair $(z_1,0)$, with
$z_1$ satisfying
$$
-\Delta z_1 - \lambda_1 \frac{z_1}{|x|^2}=z_1^{2^*-1} \qquad \mbox{ and } \qquad z_1>0 \qquad \mbox{ in } \mathbb{R}^N\setminus\{ 0\},
$$
will be crucial in the analysis, although it is not a \textit{semi-trivial} solution.
By the study of the second variation of the energy functional $\mathcal{J}_\nu$, in Proposition~\ref{thm:semitrivial} is
proved the existence of an explicit parameter $\overline{\nu}>0$ which allows the couple $(0,z_2)$ to become either a local
 minimum if $\nu<\overline{\nu}$ or a saddle point in case that $\nu>\overline{\nu}$, as critical point of $\mathcal{J}_\nu$ on the Nehari manifold to be defined.

The parameter $\nu$ dramatically affects the behavior of $\mathcal{J}_\nu$: if $\nu>\overline{\nu}$, the \textit{semi-trivial} solution is a saddle point and it arises a positive ground state, see Theorem~\ref{thm:nugrande}; while in case that $\nu<\overline{\nu}$, the couple $(0,z_2)$ is a local minimum and the energy configuration depends on $\lambda_1,\lambda_2$.
 
The relation between $\lambda_1$ and $\lambda_2$ controls the relation between the energy levels of the  \textit{semi-trivial} solution and $(z_1,0)$: if $\lambda_1\ge \lambda_2$, we find a positive ground state, see Theorem~\ref{thm:lambdaground}; if $\lambda_2>\lambda_1$ and $\nu$ is small enough, then the ground state corresponds to $(0,z_2)$, see Theorem~\ref{thm:groundstates}; while, under the assumption that $\lambda_1$ and $\lambda_2$ are somehow closed, we prove that the energy functional has a {\it Mountain-Pass} geometry on the Nehari manifold, so that a positive bound state is found, see Theorem~\ref{thm:MPgeom}.

To prove the above mentioned results, we first need to establish some compactness properties.  This step is accomplished by Palais-Smale (PS for short) condition relying on the classical \textit{concentration-compactness principle} by Lions (cf. \cite{Lions1,Lions2}). To that end, we have to take into account the failure of the compactness of the embedding of $\mathcal{D}^{1,2}(\mathbb{R}^N)$ in $L^{2^*}(\mathbb{R}^N)$.
Moreover, the coupling term $u^2v$ might be critical depending on the dimension $N$. We shall distinguish between the subcritical dimensions, $3\le N\le 5$, and the critical one, $N=6$. Then, we will assume along the paper that $3\le N\leq 6$.

\
The paper has three more sections. Section \ref{section2} contains the main functional setting and definitions, as well as an analysis of the character as a critical point of the \textit{semi-trivial} solution. In
Section \ref{section:PS}, we prove the PS condition in both  subcritical and critical dimensions.
Finally, Section \ref{section4} is devoted to prove the main results about the existence of bound and ground states of
\eqref{system:SKdV}.

\section{Variational setting}\label{section2}
The energy functional associated to \eqref{system:SKdV} is given by $\mathcal{J}_\nu$ introduced in \eqref{funct:SKdV}. $\mathcal{J}_\nu$ is well defined in $\mathbb{D}=\mathcal{D}^{1,2} (\mathbb{R}^N)\times \mathcal{D}^{1,2} (\mathbb{R}^N)$, endowed with the norm $\displaystyle \|(u,v)\|^2_{\mathbb{D}}=\|u\|^2_{\lambda_1}+\|v\|^2_{\lambda_2}$,
\begin{equation*}
\|u\|^2_{\lambda}=\int_{\mathbb{R}^N} |\nabla u|^2 \, dx - \lambda \int_{\mathbb{R}^N} \frac{u^2}{|x|^2} \, dx.
\end{equation*}

\

\noindent Note that, by Hardy's inequality,
\begin{equation}\label{hardy_inequality}
\Lambda_N \int_{\R^N} \dfrac{u^2}{|x|^2} \, dx \leq \int_{\R^N} |\nabla u|^2 \, dx,
\end{equation}
the norm $\|\cdot\|_{\lambda}$ is equivalent to the norm $\|\cdot\|_{\mathcal{D}^{1,2} (\mathbb{R}^N)}$ for any $\lambda\in (0,\Lambda_N)$.

\

\noindent On the other hand, if either system \eqref{system:SKdV} is decoupled, namely $\nu=0$, or the first component vanishes, then the second component $v$ is a solution of the entire equation
\beq\label{entire}
-\Delta z - \lambda \frac{z}{|x|^2}=z^{2^*-1} \qquad \mbox{ with }   \qquad z>0  \qquad \mbox{ in } \mathbb{R}^N\setminus\{0\}.
\eeq
Observe that if the second component $v=0$, then necessarily  $u=0$ because of the second equation of \eqref{system:SKdV}. That is the reason why there exists only one \textit{semi-trivial} solution.
Positive solutions to equation \eqref{entire} were completely classified by Terracini, (cf.  \cite{Terracini}).
In particular, among other results, it was proved that, if $\lambda\in\left(0,\Lambda_N\right)$, the family of solutions to equation \eqref{entire} is given by
\beq\label{zeta}
z_\mu^{\lambda}(x)= \mu^{-\frac{N-2}{2}}z_1^{\lambda}\left(\frac{x}{\mu}\right) \qquad \mbox{ with } \qquad z_1^{\lambda}(x)=\dfrac{A(N,\lambda)}{|x|^{a_\lambda}\left(1+|x|^{2-\frac{4a_\lambda}{N-2}}\right)^{\frac{N-2}{2}}},
\eeq
with $a_\lambda=\frac{N-2}{2}-\sqrt{\left( \frac{N-2}{2}\right)^2-\lambda}$ and $A(N,\lambda)=\frac{N(N-2-2a_\lambda)^2}{N-2} $.
Solutions of \eqref{entire} are also minimizers of the associated Rayleigh quotient
\beq\label{Slambda}
\mathcal{S}(\lambda)= \inf_{\substack{u\in \mathcal{D}^{1,2}(\mathbb{R}^N)\\
u\not\equiv0}}\frac{\|u\|^2_{\lambda}}{\|u_\mu^\lambda\|_{2^*}^{2}}=
\frac{\|z_\mu^\lambda\|^2_{\lambda}}{\|z_\mu^\lambda\|_{2^*}^{2}}= \left(1-\frac{4\lambda}{(N-2)^2} \right)^{\frac{N-1}{N}}
\mathcal{S}=\left(1-\frac{\lambda}{\Lambda_N} \right)^{\frac{N-1}{N}}
\mathcal{S},
\eeq
with $\mathcal{S}$ being the Sobolev's constant, i.e.,
\begin{equation}\label{sobolev_inequality}
\mathcal{S}\int_{\R^N} |u|^{2^*}dx \leq \int_{\R^N} |\nabla u|^2dx.
\end{equation}
Using \eqref{zeta}, it is easy to see that
\beq\label{normcrit}
\|z_\mu^\lambda\|_{2^*}^{2^*}= \mathcal{S}^{\frac{N}{2}}(\lambda),
\eeq
and, as a consequence, for every $\mu>0$ the pair $(0,z_\mu^{\lambda_2})$ is a \textit{semi-trivial} solution of \eqref{system:SKdV}. Our main aim is then to find neither \textit{semi-trivial} nor trivial solutions, namely solutions $(u,v)$ with $u\not\equiv 0$ and $v\not\equiv 0$ in $\mathbb{R}^N$.
\begin{definition}\label{bound-ground} A pair $(u,v)\in\mathbb{D}$ is said to be a non-trivial {\it bound state} of \eqref{system:SKdV} if it is a non-trivial critical point of $\mathcal{J}_\nu$.
While a bound state $(\tilde{u},\tilde{v})$ is called a ground state if its energy is minimal among all the non-trivial and non-negative bound states, i.e.,
\begin{equation}\label{ctilde}
\tilde{c}_\nu=\mathcal{J}_\nu(\tilde{u},\tilde{v})=\min\{\mathcal{J}_\nu(u,v): (u,v)\in \mathbb{D}\setminus \{ (0,0)\},\; (u,v)\ge (0,0), \mbox{ and } \mathcal{J}_\nu'(u,v)=0\}.
\end{equation}
\end{definition}

The functional $\mathcal{J}_\nu \in \mathcal{C}^2(\mathbb{D},\mathbb{R})$ and $\mathcal{J}_\nu$ is unbounded from below, namely, given $(\tilde u,\tilde v)\in \mathbb{D}$, if $\int_{\R^N} h(x) \tilde u^2 \tilde v \, dx > 0$, then $\displaystyle\mathcal{J}_\nu(t \tilde u,t \tilde v) \to -\infty$ as $t\to\infty$. Therefore,
it is convenient to introduce a proper constraint in order to minimize the energy functional $\mathcal{J}_\nu$. To that end, let us define the Nehari manifold associated to $\mathcal{J}_\nu$ as
\begin{equation*}
\mathcal{N}_\nu=\left\{ (u,v) \in \mathbb{D}\setminus \{ (0,0)\} \, : \,  \Psi(u,v)=0 \right\},
\end{equation*}
where $\displaystyle \Psi(u,v)=\left\langle \mathcal{J}'_\nu(u,v){\big|}(u,v)\right\rangle$. Given $(u,v) \in \mathcal{N}_\nu$,  it holds
\begin{equation} \label{Nnueq1}
 \|(u,v)\|_\mathbb{D}^2=\int_{\mathbb{R}^N} \left( |u|^{2^*} + |v|^{2^*} \right)dx+3\nu \int_{\mathbb{R}^N} h(x) u^2 vdx,
\end{equation}
and
\beq\label{Nnueq2}
\mathcal{J}_{\nu}{\big|}_{\mathcal{N}_\nu} (u,v) = \frac{1}{N} \int_{\mathbb{R}^N} \left(  |u|^{2^*} +  |v|^{2^*}  \right) \, dx + \frac{\nu}{2}  \int_{\mathbb{R}^N} h(x) u^{2} v  \, dx.
\eeq
For every $(u,v)\in\mathbb{D}\setminus \{ (0,0)\}$, there exists a constant $t$ depending on $(u,v)$ such that $(tu,tv) \in \mathcal{N}_\nu$. Indeed, $t_{(u,v)}$ is the unique real solution to the algebraic equation
\beq\label{normH}
\|(u,v)\|_\mathbb{D}^2=t^{2^*-2}  \int_{\mathbb{R}^N} \left(  |u|^{2^*} + |v|^{2^*}  \right) \, dx  + 3\nu \, t \int_{\mathbb{R}^N} h(x) u^{2} v  \, dx.
\eeq
By using \eqref{Nnueq1}, one gets that
\beq\label{criticalpoint1}
\begin{split}
\mathcal{J}_\nu''(u,v)[u,v]^2&=\left\langle \Psi'(u,v){\big|}(u,v)\right\rangle\\
&=-\|(u,v)\|_{\mathbb{D}}^2-(2^*-1)\int_{\mathbb{R}^N} \left(|u|^{2^*} + |v|^{2^*} \right)dx <0,
\end{split}
\eeq
for any $(u,v) \in \mathcal{N}_\nu$. Therefore, $\mathcal{N}_\nu$ is a locally smooth manifold close to every $(u,v)\in \mathbb{D}\setminus \{ (0,0)\}$ with $\Psi(u,v)=0$. In addition,
\begin{equation*}
 \mathcal{J}_\nu''(0,0)[\varphi_1,\varphi_2]^2=\|(\varphi_1,\varphi_2)\|^2_{\mathbb{D}}>0 \quad \text{ for any } (\varphi_1,\varphi_2)\in \mathcal{N}_\nu.
\end{equation*}
Then, $(0,0)$ is a strict minimum for $\mathcal{J}_\nu$ and, thus, it is an isolated point of the set $\displaystyle \mathcal{N}_\nu  \, \cup \, \{ (0,0)\}$. Consequently, the Nehari manifold $\mathcal{N}_\nu$ is a smooth complete manifold of codimension $1$. Furthermore, there exists $\rho>0$ constant such that
\beq\label{criticalpoint2}
\|(u,v)\|_{\mathbb{D}} > \rho\quad\text{for all } (u,v)\in \mathcal{N}_\nu.
\eeq

Let us emphasize that, if $(u,v) \in \mathcal{N}_\nu$ is a critical point of $\mathcal{J}_\nu$ constrained on $\mathcal{N}_\nu$, there exists a Lagrange multiplier $\omega$ such that
\begin{equation*}
\nabla_{\mathcal{N}_\nu}\mathcal{J}_{\nu}(u,v)=\mathcal{J}'_\nu(u,v)-\omega \Psi'(u,v)=0.
\end{equation*}
Testing this expression with $(u,v)$, one gets $\Psi (u,v)=\left\langle \mathcal{J}'_\nu(u,v){\big|}(u,v)\right\rangle = \omega \left\langle \Psi'_\nu(u,v){\big|}(u,v)\right\rangle=0$. By using \eqref{criticalpoint1}, we deduce $\left\langle \Psi'(u,v){\big|}(u,v)\right\rangle<0$. So, $\omega=0$ and hence $\mathcal{J}'_\nu(u,v)=0$. In conclusion,
\vspace{0.15cm}
\begin{equation}\label{eq:critt}
(u,v) \in \mathbb{D}\mbox{ is a critical point of }\mathcal{J}_\nu\Longleftrightarrow (u,v) \in \mathcal{N}_\nu\mbox{ is a critical point of }\mathcal{J}_{\nu}\mbox{ on }\mathcal{N}_\nu.
\end{equation}
Let us also note that, the functional $\mathcal{J}_{\nu}$ on the Nehari manifold $\mathcal{N}_\nu$ reads also as
\beq\label{Nnueq}
\mathcal{J}_{\nu}{\big|}_{\mathcal{N}_\nu} (u,v) = \frac{1}{6} \|(u,v)\|^2_{\mathbb{D}} + \frac{6-N}{6N} \int_{\mathbb{R}^N} \left(  |u|^{2^*} +  |v|^{2^*}  \right) \, dx.
\eeq
Hence, by \eqref{criticalpoint2} and $N\le 6$, we have $ \displaystyle
\mathcal{J}_{\nu} (u,v) > \frac{1}{6} \rho^2$ for all $\displaystyle (u,v)\in \mathcal{N}_\nu$.
Thus, $\mathcal{J}_{\nu}$ is bounded from below on $\mathcal{N}_\nu$, so we can look for solutions of \eqref{system:SKdV} by minimizing the functional on $\mathcal{N}_\nu$.

\subsection{Semi-trivial solution}

\

In this subsection we are going to study the character of the \textit{semi-trivial solution} as critical point of $\mathcal{J}_\nu|_{\mathcal{N}_\nu}$. Let us consider the decoupled energy functionals $\mathcal{J}_i:\mathcal{D}^{1,2} (\mathbb{R}^N)\rightarrow\mathbb{R}$,
\begin{equation}\label{funct:Ji}
\mathcal{J}_i(u) =\frac{1}{2} \int_{\mathbb{R}^N}  |\nabla u|^2 \, dx -\frac{\lambda_i}{2} \int_{\mathbb{R}^N} \dfrac{u^2}{|x|^2}  \, dx - \frac{1}{2^*} \int_{\mathbb{R}^N} |u|^{2^*} \, dx,
\end{equation}
for $i=1,2$ so that $\displaystyle \mathcal{J}_\nu(u,v)=\mathcal{J}_1(u)+\mathcal{J}_2(v)-\nu \int_{\mathbb{R}^N} h(x) u^2 v \, dx$.
Observe that $z_\mu^{\lambda_i}$, defined by \eqref{zeta}, is a global minimum of $\mathcal{J}_i$ constrained on the  Nehari manifold $\mathcal{N}_i$ defined by
\begin{equation}\label{Nnui}
\begin{split}
\mathcal{N}_i&= \left\{ u \in \mathcal{D}^{1,2} (\mathbb{R}^N) \setminus \{0\} \, : \,  \left\langle \mathcal{J}'_i(u){\big|} u\right\rangle=0 \right\}\\
&= \left\{ u \in\mathcal{D}^{1,2} (\mathbb{R}^N) \setminus \{0\} \, : \,  \|u\|_{\lambda_i}=\int_{\mathbb{R}^N} |u|^{2^*} \, dx \right\}.
\end{split}
\end{equation}
Due to the explicit expression \eqref{zeta}, it is easy to prove that the energy levels of $z_\mu^{\lambda_i}$,
are
\beq\label{Jzeta}
\mathcal{J}_1(z_\mu^{\lambda_1})=\dfrac{1}{N}\mathcal{S}^{\frac{N}{2}}(\lambda_1)=\mathcal{J}_\nu(z_\mu^{\lambda_1},0), \qquad \mathcal{J}_2(z_\mu^{\lambda_2})=\dfrac{1}{N}\mathcal{S}^{\frac{N}{2}}(\lambda_2)=\mathcal{J}_\nu(0,z_\mu^{\lambda_2}),
\eeq
for any $\mu>0$ with $\mathcal{S}(\lambda)$ defined in \eqref{Slambda}.\newline
Given $(\tilde{u},\tilde{v})\in\mathcal{N}_\nu$ we denote by $T_{(\tilde{u},\tilde{v})} \, \mathcal{N}_\nu$  the tangent space of $\mathcal{N}_\nu$ at $(\tilde{u},\tilde{v})$. Note that
\beq\label{TNnu2}
\varphi=(\varphi_1,\varphi_2) \in T_{(0,z_\mu^{\lambda_2})} \, \mathcal{N}_\nu\Longleftrightarrow\varphi_2 \in T_{z_\mu^{\lambda_2}} \, \mathcal{N}_2.
\eeq

Next, we determine the character of $(0,z_\mu^{\lambda_2})$ as critical point of $\mathcal{J}_\nu|_{\mathcal{N}_\nu}$.
\begin{proposition}\label{thm:semitrivial}
There exits $\overline{\nu}>0$ such that the following holds:
\begin{enumerate}
\item[i)] if $0<\nu<\overline{\nu}$,  $(0,z_\mu^{\lambda_2})$ is a local minimum of $\mathcal{J}_\nu$ constrained on $\mathcal{N}_\nu$,
\item[ii)] for any $\nu>\overline{\nu}$,  $(0,z_\mu^{\lambda_2})$ is a saddle point of $\mathcal{J}_\nu$
constrained on  $\mathcal{N}_\nu$.
\end{enumerate}

\end{proposition}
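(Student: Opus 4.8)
The plan is to read off the character of $(0,z_\mu^{\lambda_2})$ from the second variation of $\mathcal{J}_\nu$ at this point, restricted to the tangent space $T_{(0,z_\mu^{\lambda_2})}\mathcal{N}_\nu$. Since $(0,z_\mu^{\lambda_2})$ solves \eqref{system:SKdV} it is a free critical point of $\mathcal{J}_\nu$, hence by \eqref{eq:critt} a critical point of $\mathcal{J}_\nu$ on $\mathcal{N}_\nu$ with vanishing Lagrange multiplier, so the constrained Hessian coincides with the restriction of the ambient Hessian $\mathcal{J}_\nu''(0,z_\mu^{\lambda_2})$ to the tangent space. Writing $z=z_\mu^{\lambda_2}$ and using the splitting $\mathcal{J}_\nu(u,v)=\mathcal{J}_1(u)+\mathcal{J}_2(v)-\nu\int_{\R^N}h\,u^2v\,dx$ from \eqref{funct:Ji}, I would compute this Hessian directly. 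The crucial observation is that the mixed second derivative of the coupling $\int h\,u^2v$ is proportional to $u$ and so vanishes at $u=0$; consequently the Hessian is block diagonal,
\[
\mathcal{J}_\nu''(0,z)[\varphi_1,\varphi_2]^2=Q_1(\varphi_1)+Q_2(\varphi_2),
\]
with $Q_1(\varphi_1)=\|\varphi_1\|_{\lambda_1}^2-2\nu\int_{\R^N}h\,z\,\varphi_1^2\,dx$ and $Q_2(\varphi_2)=\mathcal{J}_2''(z)[\varphi_2]^2=\|\varphi_2\|_{\lambda_2}^2-(2^*-1)\int_{\R^N}z^{2^*-2}\varphi_2^2\,dx$.

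Next I would invoke the tangent space description \eqref{TNnu2}: a direction $(\varphi_1,\varphi_2)$ is tangent to $\mathcal{N}_\nu$ at $(0,z)$ exactly when $\varphi_2\in T_z\mathcal{N}_2$, with $\varphi_1\in\mathcal{D}^{1,2}(\R^N)$ arbitrary. Since $z$ is a global minimum of $\mathcal{J}_2$ on $\mathcal{N}_2$ (as recorded after \eqref{Nnui}), its constrained second variation is nonnegative on $T_z\mathcal{N}_2$; and as $z$ is a critical point of $\mathcal{J}_2$ on $\mathcal{N}_2$, this constrained second variation equals $Q_2$ restricted to $T_z\mathcal{N}_2$. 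Hence $Q_2(\varphi_2)\ge0$ there, with strict positivity off the (one–dimensional, scaling) kernel. Therefore the whole sign behaviour of $\mathcal{J}_\nu''(0,z)$ on the tangent space is governed by the first block $Q_1$, which is the only $\nu$–dependent piece.

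I would then define the threshold as the bottom of the spectrum of the linearized first equation,
\[
\overline{\nu}=\frac12\inf_{\varphi\in\mathcal{D}^{1,2}(\R^N)\setminus\{0\}}\frac{\|\varphi\|_{\lambda_1}^2}{\int_{\R^N}h\,z\,\varphi^2\,dx}.
\]
The key point is that $\overline{\nu}>0$, which is the relative boundedness of $\varphi\mapsto\int h\,z\,\varphi^2$ with respect to $\|\cdot\|_{\lambda_1}^2$. I would obtain this by Hölder's inequality, estimating $\int h\,z\,\varphi^2\le\|h\|_{p}\,\|z\|_{2^*}\,\|\varphi\|_{2^*}^2$ with conjugate exponent $p=\tfrac{2N}{6-N}$ (and $p=\infty$ when $N=6$), then using \eqref{normcrit} for $\|z\|_{2^*}$, the Sobolev inequality \eqref{sobolev_inequality}, and the equivalence of $\|\cdot\|_{\lambda_1}$ with $\|\cdot\|_{\mathcal{D}^{1,2}(\R^N)}$ from \eqref{hardy_inequality}; this is exactly where $3\le N\le 6$ and the integrability of $h$ enter, since for $N>6$ the exponent $p$ is meaningless and the coupling supercritical. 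Granting $\overline{\nu}>0$, the dichotomy follows from the definition of the infimum: if $0<\nu<\overline{\nu}$ then $Q_1(\varphi_1)\ge(1-\nu/\overline{\nu})\|\varphi_1\|_{\lambda_1}^2>0$ for every $\varphi_1\neq0$, so with $Q_2\ge0$ the second variation is coercively positive on the tangent space and $(0,z)$ is a local minimum; whereas if $\nu>\overline{\nu}$ there is $\varphi_1^\ast$ with $Q_1(\varphi_1^\ast)<0$, so the admissible direction $(\varphi_1^\ast,0)$ makes the second variation negative while directions $(0,\varphi_2)$ with $\varphi_2\in T_z\mathcal{N}_2$ off the kernel keep it positive, giving a saddle point.

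The main obstacle is establishing $\overline{\nu}>0$, i.e. dominating $\int h\,z\,\varphi^2$ by $\|\varphi\|_{\lambda_1}^2$: since $z$ is singular at the origin and decays only polynomially at infinity (see \eqref{zeta}), this bound genuinely needs the integrability assumption on $h$ within the window $3\le N\le 6$, and not merely $h\in L^\infty$. A secondary, more delicate point is upgrading the nonnegativity of the second variation to a genuine local minimum when $\nu<\overline{\nu}$: because the whole family $\{(0,z_\mu^{\lambda_2})\}_\mu$ consists of critical points of equal energy, the Hessian degenerates along the scaling direction in the second component, so local minimality must be argued transversally to that direction, the coercivity of $Q_1$ controlling the first component and the strict minimality of $z$ for $\mathcal{J}_2$ on $\mathcal{N}_2$ the second.
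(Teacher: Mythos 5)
Your proposal follows essentially the same route as the paper's proof: the same threshold $\overline{\nu}$ as in \eqref{overnu}, the same block-diagonal second variation \eqref{secondvar} evaluated on the tangent space via \eqref{TNnu2}, positivity of the $\mathcal{J}_2$-block from the minimality of $z_\mu^{\lambda_2}$ on $\mathcal{N}_2$, and the same two families of test directions $(\varphi_1,0)$ and $(0,\varphi_2)$ to exhibit the saddle when $\nu>\overline{\nu}$. The two delicate points you flag are genuine but are passed over silently in the paper: positivity of $\overline{\nu}$ is never argued there (your H\"older estimate with exponent $\tfrac{2N}{6-N}$ is exactly what is needed, and it does require integrability of $h$ beyond mere boundedness when $N<6$), and the paper's inequality \eqref{secondmin} asserts strict coercivity of $\mathcal{J}_2''(z_\mu^{\lambda_2})$ on all of $T_{z_\mu^{\lambda_2}}\mathcal{N}_2$, ignoring the degeneracy along the scaling direction of the family $\{z_\mu^{\lambda_2}\}_{\mu>0}$ that you correctly identify.
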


\begin{proof}
To obtain $i)$, let us set
\beq\label{overnu}
\overline{\nu}=\inf_{\substack{\varphi\in \mathcal{D}^{1,2}(\mathbb{R}^N)\\
\varphi\not\equiv0}}\frac{\|\varphi\|^2_{\lambda_1}}{\displaystyle2\int_{\R^N} h(x) \varphi^2 z_\mu^{\lambda_2} \, dx}.
\eeq
Next, given $\varphi=(\varphi_1,\varphi_2)\in T_{(0,z_\mu^{\lambda_2})} \, \mathcal{N}_\nu$, we have
\beq\label{secondvar}
\mathcal{J}_\nu''(0,z_\mu^{\lambda_2})[(\varphi_1,\varphi_2)]^2= \|\varphi_1\|^2_{\lambda_1}+\mathcal{J}_{2}''(z_\mu^{\lambda_2})[\varphi_2]^2-2\nu\int_{\R^N} h(x) \varphi_1^2 z_\mu^{\lambda_2}  \, dx.
\eeq
As $z_\mu^{\lambda_2}$ is a minimum of $\mathcal{J}_{2}$ on $\mathcal{N}_2$ and $\varphi_2\in T_{z_\mu^{\lambda_2}} \, \mathcal{N}_2$, by \eqref{TNnu2}, there exists $C>0$ such that
\beq\label{secondmin}
\mathcal{J}_{2}''(z_\mu^{\lambda_2})[\varphi_2]^2\ge C \|\varphi_2\|^2_{\lambda_2}.
\eeq
Then, if $\nu<\overline{\nu}$, there exists $c>0$ such that $\displaystyle \mathcal{J}_\nu''(0,z_\mu^{\lambda_2})[(\varphi_1,\varphi_2)]^2\ge c (\| \varphi_1\|^2_{\lambda_1} + \|\varphi_2\|^2_{\lambda_2} )$, which proves that $(0,z_\mu^{\lambda_2})$ is a local strict minimum of $\mathcal{J}_\nu$ constrained on $\mathcal{N}_\nu$.

To prove $ii)$, first we note that, by \eqref{secondvar} and \eqref{secondmin},
\begin{equation}\label{saddle0}
\mathcal{J}_\nu''(0,z_\mu^{\lambda_2})[(0,\varphi_2)]^2=\mathcal{J}_{2}''(z_\mu^{\lambda_2})[\varphi_2]^2\ge C \| \varphi_2\|^2_{\lambda_2}.
 \end{equation}
On the other hand, if we take $\varphi=(\varphi_1,0)$ such that
\begin{equation*}
\nu> \frac{\|\varphi_1\|^2_{\lambda_1}}{\displaystyle 2\int_{\R^N} h(x) \varphi_1^2 z_\mu^{\lambda_2}  \, dx }>\overline{\nu},
\end{equation*}
we get
\beq\label{saddle2}
\mathcal{J}_\nu''(0,z_\mu^{\lambda_2})[(\varphi_1,0)]^2=\|\varphi_1\|^2_{\lambda_1}-2\nu\int_{\R^N} h(x) \varphi_1^2 z_\mu^{\lambda_2}  \, dx<0 \qquad \mbox{ for any } \nu>\overline{\nu}.
\eeq
Thus, by \eqref{saddle0} and \eqref{saddle2}, we conclude that $(0,z_\mu^{\lambda_2})$ is saddle point of $\mathcal{J}_\nu $ on $\mathcal{N}_\nu$.
\end{proof}

\begin{remark}\label{zeta1Nehari}
Although the pair $(z_\mu^{\lambda_1},0)$ is not a critical point of the energy functional $\mathcal{J}_\nu$, this couple does belong to the Nehari manifold $\mathcal{N}_\nu$.
\end{remark}

To conclude this section we recall the following result which will be useful in several proofs.
\begin{lemma}\label{algelemma}{\cite[Lemma 3.3]{AbFePe}}
Assume that $A, B>0$ and $\gamma \ge 2$. We define the set
\begin{equation*}
\Sigma_\nu=\{\sigma \in (0,+\infty)  \, : \, A \sigma^{\frac{N-2}{N}} < \sigma + B \nu \sigma^{\frac{\gamma}{2} \frac{N-2}{N}} \}.
\end{equation*}
Then, for any $\varepsilon>0$ there exists $\tilde{\nu}>0$ such that, for $0<\nu<\tilde{\nu}$, we have $\displaystyle \inf_{\Sigma_\nu} \sigma > (1-\varepsilon) A^{\frac{N}{2}}$.
\end{lemma}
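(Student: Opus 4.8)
The plan is to read this as an elementary perturbation statement centered at the value $A^{N/2}$, which is exactly the infimum of the set obtained when $\nu=0$. First I would lighten the notation by writing $p=\frac{N-2}{N}\in(0,1)$ and $q=\frac{\gamma}{2}\,p$, so that $q\ge p$ since $\gamma\ge 2$, together with the two arithmetic identities $1-p=\frac{2}{N}$ and $\frac{1}{1-p}=\frac{N}{2}$. In this notation the defining condition of $\Sigma_\nu$ becomes $A\sigma^{p}<\sigma+B\nu\,\sigma^{q}$, and a point $\sigma>0$ \emph{fails} to lie in $\Sigma_\nu$ precisely when $A\sigma^{p}-\sigma\ge B\nu\,\sigma^{q}$.

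The key step is the factorization $A\sigma^{p}-\sigma=\sigma^{p}\bigl(A-\sigma^{1-p}\bigr)$, so that the complementary condition is equivalent to $\nu\le\phi(\sigma)$, where
\[
\phi(\sigma):=\frac{A\sigma^{p}-\sigma}{B\,\sigma^{q}}=\frac{1}{B}\,\sigma^{p-q}\bigl(A-\sigma^{1-p}\bigr).
\]
The crucial observation is that $A-\sigma^{1-p}>0$ if and only if $\sigma<A^{\frac{1}{1-p}}=A^{N/2}$; hence $\phi$ is strictly positive on all of $(0,A^{N/2})$, which is the mechanism forcing the infimum of $\Sigma_\nu$ toward $A^{N/2}$.

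To get the quantitative statement with a strict inequality, I would fix $\varepsilon>0$ and introduce the strictly interior threshold $\sigma_1:=\bigl(1-\tfrac{\varepsilon}{2}\bigr)A^{N/2}<A^{N/2}$, then bound $\phi$ away from $0$ uniformly on $(0,\sigma_1]$. Since $\sigma\mapsto\sigma^{1-p}$ is increasing, one has $A-\sigma^{1-p}\ge A-\sigma_1^{1-p}=A\bigl(1-(1-\tfrac{\varepsilon}{2})^{2/N}\bigr)>0$ on $(0,\sigma_1]$, using $\sigma_1^{1-p}=(1-\tfrac{\varepsilon}{2})^{2/N}A$; and since $p-q\le 0$ the map $\sigma\mapsto\sigma^{p-q}$ is nonincreasing, so $\sigma^{p-q}\ge\sigma_1^{p-q}>0$ there (with $\sigma^{p-q}\equiv 1$ in the borderline case $\gamma=2$). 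Multiplying gives $\phi(\sigma)\ge m$ on $(0,\sigma_1]$, where $m:=\frac{A}{B}\,\sigma_1^{p-q}\bigl(1-(1-\tfrac{\varepsilon}{2})^{2/N}\bigr)>0$ depends only on $A,B,N,\gamma,\varepsilon$. Setting $\tilde\nu:=m$, for any $0<\nu<\tilde\nu$ and any $\sigma\in(0,\sigma_1]$ we get $\nu<m\le\phi(\sigma)$, i.e. $A\sigma^{p}>\sigma+B\nu\,\sigma^{q}$, so $\sigma\notin\Sigma_\nu$. Thus $\Sigma_\nu\cap(0,\sigma_1]=\emptyset$, and therefore $\inf_{\Sigma_\nu}\sigma\ge\sigma_1=\bigl(1-\tfrac{\varepsilon}{2}\bigr)A^{N/2}>(1-\varepsilon)A^{N/2}$, which is the claim. (The infimum is meaningful because $A\sigma^{p}<\sigma$ for every $\sigma>A^{N/2}$, so $\Sigma_\nu\ne\emptyset$.)

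The computation is entirely elementary; the only point needing care is upgrading the pointwise positivity of $\phi$ on $(0,A^{N/2})$ to a \emph{uniform} positive lower bound on $(0,\sigma_1]$. This is precisely where working with the strictly interior threshold $\sigma_1<A^{N/2}$ is essential: it keeps $A-\sigma^{1-p}$ bounded below by a positive constant \emph{and} leaves the margin required for the strict inequality in the conclusion. The behaviour as $\sigma\to 0^{+}$ is harmless, since the factorization shows $\phi(\sigma)\to+\infty$ when $\gamma>2$ and $\phi(\sigma)\to A/B$ when $\gamma=2$.
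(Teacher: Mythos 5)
Your proof is correct. There is, in fact, no internal proof to compare against: the paper states this lemma as a quotation of \cite[Lemma 3.3]{AbFePe} and gives no argument of its own, so your write-up serves as a self-contained substitute. The mechanism you use is the natural one: rewriting $\sigma\notin\Sigma_\nu$ as $\nu\le\phi(\sigma)$ with $\phi(\sigma)=\tfrac{1}{B}\,\sigma^{p-q}\bigl(A-\sigma^{1-p}\bigr)$, observing that $A-\sigma^{1-p}>0$ exactly on $\bigl(0,A^{N/2}\bigr)$, and then obtaining a \emph{uniform} lower bound $\phi\ge m>0$ on the strictly interior interval $\bigl(0,\sigma_1\bigr]$ with $\sigma_1=\bigl(1-\tfrac{\varepsilon}{2}\bigr)A^{N/2}$, so that $\tilde\nu:=m$ works. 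The monotonicity claims (the factor $A-\sigma^{1-p}$ bounded below via $\sigma_1^{1-p}=(1-\tfrac{\varepsilon}{2})^{2/N}A$, and $\sigma^{p-q}\ge\sigma_1^{p-q}$ since $p-q\le 0$) are accurate, including the borderline case $\gamma=2$, and your remark that $(A^{N/2},+\infty)\subset\Sigma_\nu$ correctly settles nonemptiness of the set over which the infimum is taken. One cosmetic point: the definition of $\sigma_1$ implicitly requires $\varepsilon<2$ (otherwise $\sigma_1\le 0$ and the interval $\bigl(0,\sigma_1\bigr]$ is empty or ill-defined); since the conclusion for large $\varepsilon$ follows from the conclusion for any smaller $\varepsilon$, you should open the proof with ``without loss of generality $0<\varepsilon<1$.'' This is a one-line fix, not a gap.
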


\section{The Palais-Smale condition}\label{section:PS}

As commented in the introduction, a crucial step to obtain existence of solution  to \eqref{system:SKdV} is the PS condition.

\begin{definition}
Let $V$ be a Banach space. We say that $\{u_n\} \subset V$ is a PS sequence for an energy functional $\mathfrak{F}:V\rightarrow\mathbb{R}$ if
\begin{equation}\label{PSc}
\mathfrak{F}(u_n)\to c\quad\mbox{and}\quad  \mathfrak{F}'(u_n) \to 0\quad\mbox{in}\ V^*\quad \hbox{as}\quad n\to + \infty,
\end{equation}
where $V^*$ is the dual space of $V$. Moreover, we say that $\{u_n\}$ satisfies a PS condition if
\begin{equation*}
\{u_n\}\quad \mbox{has a strongly convergent subsequence.}
\end{equation*}
\end{definition}
Even more, we say that $\{u_n\}\subset V$ is a PS sequence at level $c$ if  \eqref{PSc} holds. Also, the functional
$\mathfrak{F}$ satisfies the PS  condition at level $c$ if every PS sequence at level $c$
 for $\mathfrak{F}$ satisfies the PS condition.

\begin{lemma}\label{critical Constrained}
  Assume that $\{(u_n,v_n)\}\subset \mathcal{N}_\nu$ is a PS sequence of $\mathcal{J}_{\nu}$ constrained on $\mathcal{N}_\nu$. Then  $\{(u_n,v_n)\}$ is a PS sequence of $\mathcal{J}_\nu$.
\end{lemma}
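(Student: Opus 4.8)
The plan is to exploit the Lagrange multiplier representation of the constrained gradient, together with the strict sign information in \eqref{criticalpoint1} and the uniform lower bound \eqref{criticalpoint2}. Since $\mathcal{N}_\nu = \Psi^{-1}(0)$ is a smooth manifold of codimension one and $\mathbb{D}$ is a Hilbert space, at each point $(u_n,v_n)$ there is a multiplier $\omega_n\in\mathbb{R}$ with
$$\mathcal{J}'_\nu(u_n,v_n) = \omega_n\,\Psi'(u_n,v_n) + r_n, \qquad \|r_n\|_{\mathbb{D}^*} = \varepsilon_n,$$
where $\varepsilon_n := \|\nabla_{\mathcal{N}_\nu}\mathcal{J}_\nu(u_n,v_n)\|_{\mathbb{D}^*}\to 0$ by hypothesis. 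The goal thus reduces to proving $\omega_n\to 0$ and that $\Psi'(u_n,v_n)$ stays bounded in $\mathbb{D}^*$; together these give $\mathcal{J}'_\nu(u_n,v_n)\to 0$.

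First I would establish boundedness of the sequence. From \eqref{Nnueq} and $\mathcal{J}_\nu(u_n,v_n)\to c$, using $N\le 6$ so that $\frac{6-N}{6N}\ge 0$, both summands on the right of \eqref{Nnueq} are nonnegative, whence $\frac{1}{6}\|(u_n,v_n)\|^2_{\mathbb{D}}\le c+o(1)$ and the sequence is uniformly bounded in $\mathbb{D}$. By the Sobolev inequality \eqref{sobolev_inequality} and Hölder, the critical integrals $\int_{\R^N}(|u_n|^{2^*}+|v_n|^{2^*})\,dx$ and the coupling integral are then bounded as well. Since $\mathcal{J}_\nu\in\mathcal{C}^2(\mathbb{D},\mathbb{R})$, the map $\Psi'$ is continuous, hence bounded on bounded sets, so $\|\Psi'(u_n,v_n)\|_{\mathbb{D}^*}$ is bounded.

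Next I would pin down $\omega_n$. Testing the multiplier identity against $(u_n,v_n)$ and using $(u_n,v_n)\in\mathcal{N}_\nu$, so that $\langle\mathcal{J}'_\nu(u_n,v_n)\,|\,(u_n,v_n)\rangle=\Psi(u_n,v_n)=0$, yields
$$0 = \omega_n\,\langle\Psi'(u_n,v_n)\,|\,(u_n,v_n)\rangle + \langle r_n\,|\,(u_n,v_n)\rangle.$$
By \eqref{criticalpoint1}, $\langle\Psi'(u_n,v_n)\,|\,(u_n,v_n)\rangle = -\|(u_n,v_n)\|^2_{\mathbb{D}} - (2^*-1)\int_{\R^N}(|u_n|^{2^*}+|v_n|^{2^*})\,dx$, which by \eqref{criticalpoint2} is bounded above by $-\rho^2<0$, hence bounded away from zero. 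Since $|\langle r_n\,|\,(u_n,v_n)\rangle|\le \varepsilon_n\|(u_n,v_n)\|_{\mathbb{D}} = o(1)$ by boundedness, solving for $\omega_n$ gives $|\omega_n|\le \varepsilon_n\|(u_n,v_n)\|_{\mathbb{D}}/\rho^2\to 0$.

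Finally, combining $\omega_n\to 0$ with the boundedness of $\Psi'(u_n,v_n)$,
$$\|\mathcal{J}'_\nu(u_n,v_n)\|_{\mathbb{D}^*}\le |\omega_n|\,\|\Psi'(u_n,v_n)\|_{\mathbb{D}^*} + \varepsilon_n \to 0,$$
which together with $\mathcal{J}_\nu(u_n,v_n)\to c$ shows $\{(u_n,v_n)\}$ is a (free) PS sequence for $\mathcal{J}_\nu$. The main obstacle is the uniform control of the Lagrange multiplier, and its resolution hinges precisely on the strict negativity in \eqref{criticalpoint1} reinforced by the uniform lower bound \eqref{criticalpoint2}: without this non-degeneracy of $\mathcal{N}_\nu$ the multiplier $\omega_n$ could fail to vanish. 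The role of $N\le 6$ is likewise essential, as it is what delivers boundedness of the sequence directly from \eqref{Nnueq}.
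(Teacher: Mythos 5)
Your proof is correct and follows essentially the same route as the paper's: a Lagrange multiplier representation of the constrained gradient, tested against $(u_n,v_n)$, with the strict negativity in \eqref{criticalpoint1} forcing $\omega_n\to 0$ and hence $\mathcal{J}'_\nu(u_n,v_n)\to 0$. In fact your write-up is more complete than the paper's (which is quite terse), since you explicitly supply the boundedness of $\|(u_n,v_n)\|_{\mathbb{D}}$ via \eqref{Nnueq}, the uniform lower bound $\left|\left\langle \Psi'(u_n,v_n)\,\big|\,(u_n,v_n)\right\rangle\right|\ge\rho^2$ from \eqref{criticalpoint2}, and the boundedness of $\Psi'(u_n,v_n)$ in $\mathbb{D}^*$ (for the latter, justify it from the explicit form of $\Psi'$ via Sobolev and H\"older rather than from continuity alone, which in infinite dimensions does not by itself imply boundedness on bounded sets).
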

\begin{proof}
Since $\{(u_n,v_n)\}\subset \mathcal{N}_\nu$ is a PS sequence of $\mathcal{J}_\nu$ constrained on $\mathcal{N}_\nu$ we have
$$\mathcal{J}_\nu (u_n,v_n)\to c\qquad\text{and}\qquad\nabla_{\mathcal{N}_\nu}\mathcal{J}_\nu (u_n,v_n) =\mathcal{J}'_\nu(u_n,v_n)-\omega_n\Psi' (u_n,v_n)\to 0,$$
where $\omega_n$ is the corresponding Lagrange multiplier sequence. Testing the above expression with $(u_n,v_n)$, we have $\Psi (u_n,v_n)=(\mathcal{J}'_nu (u_n,v_n)|(u_n,v_n))=0$, while by \eqref{criticalpoint1}
$\Psi' (u_n,v_n)<0$, then we conclude that $\omega_n\to 0$. As a consequence, we obtain
$\mathcal{J}'_\nu (u_n,v_n)\to 0$.
\end{proof}
\begin{remark}
By Lemma~\ref{critical Constrained} and \eqref{eq:critt},
 it is enough to show that the PS condition for $\mathcal{J}_\nu$
 holds instead of proving the PS condition for $\mathcal{J}_\nu|_{\mathcal{N}_\nu}$.
\end{remark}
Now, we address the boundedness of PS sequences that, together with the compact embedding of the space $\mathcal{D}^{1,2}$ in the subcritical regime, will provide compactness of PS sequences.

\begin{lemma}\label{lemmaPS0}
If $\{(u_n,v_n)\} \subset \mathbb{{D}}$ is a PS sequence for $\mathcal{J}_\nu$ at level
$c\in\mathbb{R}$, then $\|(u_n,v_n)\|_{\mathbb{D}}<C$.
\end{lemma}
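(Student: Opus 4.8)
The plan is to combine the two pieces of information contained in the PS hypothesis \eqref{PSc} into a single coercive estimate on $\|(u_n,v_n)\|_{\mathbb{D}}$. Writing the PS conditions explicitly, I would first record that $\mathcal{J}_\nu(u_n,v_n)=c+o(1)$, while, since $\mathcal{J}'_\nu(u_n,v_n)\to 0$ in $\mathbb{D}^*$, testing against $(u_n,v_n)$ gives $\left\langle \mathcal{J}'_\nu(u_n,v_n)\big|(u_n,v_n)\right\rangle=o\!\left(\|(u_n,v_n)\|_{\mathbb{D}}\right)$. The source of difficulty is that both the critical terms $\int(|u_n|^{2^*}+|v_n|^{2^*})$ and the non-symmetric coupling term $\int h\,u_n^2 v_n$ are a priori uncontrolled; the key idea is to choose a linear combination of the two PS identities that annihilates the sign-indefinite coupling term entirely.

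Comparing \eqref{funct:SKdV} with \eqref{Nnueq1}, the coupling term $\int h\,u^2 v$ enters $\mathcal{J}_\nu$ with coefficient $-\nu$, but enters $\left\langle \mathcal{J}'_\nu(u,v)\big|(u,v)\right\rangle$ with coefficient $-3\nu$. Hence I would form the combination
\begin{equation*}
\mathcal{J}_\nu(u_n,v_n)-\tfrac{1}{3}\left\langle \mathcal{J}'_\nu(u_n,v_n)\big|(u_n,v_n)\right\rangle,
\end{equation*}
in which the coupling contribution cancels exactly. A direct computation, which is nothing but the identity already recorded in \eqref{Nnueq}, shows that this combination equals
\begin{equation*}
\tfrac{1}{6}\|(u_n,v_n)\|_{\mathbb{D}}^2+\tfrac{6-N}{6N}\int_{\R^N}\left(|u_n|^{2^*}+|v_n|^{2^*}\right)dx.
\end{equation*}
Here the hypothesis $N\le 6$ is essential: it makes the coefficient $\tfrac{6-N}{6N}$ nonnegative, so the last (nonnegative) integral may simply be discarded, leaving the lower bound $\tfrac{1}{6}\|(u_n,v_n)\|_{\mathbb{D}}^2$.

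Putting the two steps together yields
\begin{equation*}
\tfrac{1}{6}\|(u_n,v_n)\|_{\mathbb{D}}^2\le \mathcal{J}_\nu(u_n,v_n)-\tfrac{1}{3}\left\langle \mathcal{J}'_\nu(u_n,v_n)\big|(u_n,v_n)\right\rangle = c+o(1)+o\!\left(\|(u_n,v_n)\|_{\mathbb{D}}\right).
\end{equation*}
Setting $X_n=\|(u_n,v_n)\|_{\mathbb{D}}$, this is an inequality of the form $\tfrac{1}{6}X_n^2\le C+\varepsilon_n X_n$ with $\varepsilon_n\to 0$, in which the quadratic term dominates the linear one; hence $\{X_n\}$ is bounded, which is the claim. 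I do not expect a genuine obstacle once the right combination is identified: the whole point is the algebraic cancellation of the coupling term, after which boundedness follows from the elementary fact that a quadratic lower bound cannot be overtaken by a sublinear upper bound. The only structural input required is the sign condition $N\le 6$ used to drop the critical integral; in the borderline critical dimension $N=6$ that term vanishes identically and the estimate is even cleaner.
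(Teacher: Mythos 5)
Your proposal is correct and follows essentially the same route as the paper: both form the combination $\mathcal{J}_\nu(u_n,v_n)-\tfrac{1}{3}\left\langle \mathcal{J}'_\nu(u_n,v_n)\big|(u_n,v_n)\right\rangle$ to cancel the coupling term, use $N\le 6$ to discard the nonnegative critical integral, and conclude from the resulting inequality $\tfrac{1}{6}\|(u_n,v_n)\|_{\mathbb{D}}^2\le c+o(1)\,\|(u_n,v_n)\|_{\mathbb{D}}$ that the sequence is bounded. The only cosmetic difference is that the identity you invoke is the off-manifold version of \eqref{Nnueq}, which the paper derives directly as \eqref{eq:limit2}.
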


\begin{proof}
Let $\{(u_n,v_n)\} \subset \mathbb{D}$ be a PS sequence for $\mathcal{J}_{\nu}$ at level $c$, i.e.,
\begin{equation*}
\mathcal{J}_{\nu}(u_n,v_n)\to c\quad\text{and}\quad \mathcal{J}_{\nu}'(u_n,v_n)\to0\quad\text{as }n\to+\infty.
\end{equation*}
Since $\mathcal{J}_{\nu}'(u_n,v_n)\to0$ in $\mathbb{D}'$, we have $\displaystyle \left\langle \mathcal{J}_{\nu}'(u_n,v_n)\left|\frac{(u_n,v_n)}{\|(u_n,v_n)\|_{\mathbb{D}}} \right.\right\rangle\to0$.
Hence, there exists a subsequence (still denoted by $\{(u_n,v_n)\}$) such that
\begin{equation*}
\|(u_n,v_n)\|_{\mathbb{D}}^2-\int_{\mathbb{R}^N}\left(|u_n|^{2^*}+|v_n|^{2^*}\right)dx-3\nu\int_{\mathbb{R}^N}
h(x)u_n^2v_ndx=\|(u_n,v_n)\|_{\mathbb{D}}\cdot o(1).
\end{equation*}
Since $\mathcal{J}_{\nu}(u_n,v_n)\to c$, one obtains
\begin{equation*}
\frac12\|(u_n,v_n)\|_{\mathbb{D}}^2-\frac{1}{2^*}\int_{\mathbb{R}^N}\left(|u_n|^{2^*}+|v_n|^{2^*}\right)dx-\nu
\int_{\mathbb{R}^N}h(x)u_n^2v_ndx=c+o(1).
\end{equation*}
Therefore
\begin{equation}\label{eq:limit}
\mathcal{J}_{\nu}(u_n,v_n)-\frac13\left\langle \mathcal{J}_{\nu}'(u_n,v_n)\left|\frac{(u_n,v_n)}{\|(u_n,v_n)\|_{\mathbb{D}}}
\right.\right\rangle=c+\|(u_n,v_n)\|_{\mathbb{D}}\cdot o(1),
\end{equation}
and, hence,
\begin{equation}\label{eq:limit2}
\frac{1}{6}\|(u_n,v_n)\|_{\mathbb{D}}^2+\frac{6-N}{6N}\int_{\mathbb{R}^N}\left(|u_n|^{2^*}+|v_n|^{2^*}\right)dx=c+\|(u_n,v_n)\|_{\mathbb{D}}\cdot o(1).
\end{equation}
As a consequence, $\displaystyle \frac16\|(u_n,v_n)\|_{\mathbb{D}}^2\leq c+\|(u_n,v_n)\|_{\mathbb{D}}\cdot o(1)$. Thus, the sequence $\{(u_n,v_n)\}$ is bounded in $\mathbb{D}$.
\end{proof}
\subsection{Subcritical dimension $3\le N\le5$}
\begin{lemma}\label{lemmaPS2}
Assume $3\le N\le5$.  Then,  $\mathcal{J}_\nu$ satisfies the PS condition at every level $c$ satisfying
\beq\label{hyplemmaPS2}
c<\frac{1}{N} \min\{ \mathcal{S}^{\frac{N}{2}}(\lambda_1),\mathcal{S}^{\frac{N}{2}}(\lambda_2) \}.
\eeq
\end{lemma}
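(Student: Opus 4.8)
The plan is to follow the classical blow-up scheme for critical problems: start from a PS sequence, extract a weak limit that turns out to be a critical point, and control the mass carried away by the remainder through the Sobolev--Hardy inequality \eqref{Slambda}, ruling out any escape of energy below the threshold in \eqref{hyplemmaPS2}. First I would take a PS sequence $\{(u_n,v_n)\}\subset\mathbb{D}$ at level $c$. By Lemma~\ref{lemmaPS0} it is bounded in $\mathbb{D}$, so up to a subsequence $(u_n,v_n)\rightharpoonup(u,v)$ weakly in $\mathbb{D}$, pointwise a.e., and strongly in $L^p_{loc}$ for every $p<2^*$. The decisive feature of the subcritical regime $3\le N\le5$ is that the coupling exponent $3$ is strictly below $2^*=\frac{2N}{N-2}$: invoking the concentration--compactness principle of Lions \cite{Lions1,Lions2}, adapted to account for concentration at the origin (the singular set of the Hardy potential) and for escape of mass to infinity, a profile concentrating at scale $\mu\to0$ contributes a cubic mass of order $\mu^{(6-N)/2}\to0$ when $N<6$, so the bubbles detaching from the sequence carry no mass for the subcritical functional. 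I therefore expect
$$\int_{\mathbb{R}^N}h(x)u_n^2v_n\,dx\longrightarrow\int_{\mathbb{R}^N}h(x)u^2v\,dx,$$
and, likewise, convergence of the bilinear/quadratic coupling terms in $\langle\mathcal{J}_\nu'(u_n,v_n)|\cdot\rangle$ when tested against fixed functions. Passing to the weak limit in $\mathcal{J}_\nu'(u_n,v_n)\to0$ then yields $\mathcal{J}_\nu'(u,v)=0$, so $(u,v)$ is a critical point; by \eqref{Nnueq} together with $N\le6$ (and trivially if $(u,v)=(0,0)$) it satisfies $\mathcal{J}_\nu(u,v)\ge0$.

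Next I would set $w_n=u_n-u$, $\omega_n=v_n-v$ and exploit the Brezis--Lieb lemma for the critical terms, together with the Hilbert decomposition $\|u_n\|_{\lambda_1}^2=\|u\|_{\lambda_1}^2+\|w_n\|_{\lambda_1}^2+o(1)$, whose cross term vanishes because $|x|^{-2}u$ defines an element of $\mathcal{D}^{-1,2}$ and $w_n\rightharpoonup0$ (and analogously for $v_n$). Testing $\mathcal{J}_\nu'(u_n,v_n)$ against $(u_n,0)$ and $(0,v_n)$ and subtracting the corresponding identities for the critical point $(u,v)$, using the compactness of the coupling term, I obtain the decoupled asymptotics
$$\|w_n\|_{\lambda_1}^2-\int_{\mathbb{R}^N}|w_n|^{2^*}\,dx\to0,\qquad \|\omega_n\|_{\lambda_2}^2-\int_{\mathbb{R}^N}|\omega_n|^{2^*}\,dx\to0,$$
while the energy splits, after substituting these relations, as
$$c=\mathcal{J}_\nu(u,v)+\frac1N\lim_{n}\int_{\mathbb{R}^N}\left(|w_n|^{2^*}+|\omega_n|^{2^*}\right)dx.$$

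Finally I would run the threshold dichotomy. Writing $\ell_1=\lim_n\int_{\mathbb{R}^N}|w_n|^{2^*}\,dx$ and $\ell_2=\lim_n\int_{\mathbb{R}^N}|\omega_n|^{2^*}\,dx$ along a subsequence, the Sobolev--Hardy inequality \eqref{Slambda} combined with the decoupled asymptotics gives $\mathcal{S}(\lambda_1)\ell_1^{2/2^*}\le\ell_1$ and $\mathcal{S}(\lambda_2)\ell_2^{2/2^*}\le\ell_2$, whence each $\ell_i$ is either $0$ or $\ge\mathcal{S}^{\frac{N}{2}}(\lambda_i)$. If some $\ell_i>0$, then, since $\mathcal{J}_\nu(u,v)\ge0$,
$$c\ge\frac1N\mathcal{S}^{\frac{N}{2}}(\lambda_i)\ge\frac1N\min\left\{\mathcal{S}^{\frac{N}{2}}(\lambda_1),\mathcal{S}^{\frac{N}{2}}(\lambda_2)\right\},$$
contradicting \eqref{hyplemmaPS2}. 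Hence $\ell_1=\ell_2=0$, the remainder norms $\|w_n\|_{\lambda_1}$, $\|\omega_n\|_{\lambda_2}$ vanish by the decoupled asymptotics, and $(u_n,v_n)\to(u,v)$ strongly in $\mathbb{D}$.

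I expect the main obstacle to be the two compactness statements of the first paragraph: the identification of $(u,v)$ as a genuine critical point and, above all, the convergence of the coupling term. Both amount to showing that the failure of compactness of the critical Sobolev embedding, now complicated by the Hardy singularity at the origin and by possible escape of mass to infinity, does not leak into the \emph{subcritical} coupling functional $\int_{\mathbb{R}^N}h(x)u^2v\,dx$. This is exactly where the restriction $N\le5$ is used, and where the concentration--compactness analysis must be carried out with care; the critical dimension $N=6$ is excluded precisely because there the coupling becomes critical and the concentrating bubbles no longer have vanishing cubic mass, so a separate argument is required.
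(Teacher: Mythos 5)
Your skeleton (Brezis--Lieb splitting, decoupled asymptotics for the remainders, then the dichotomy $\ell_i=0$ or $\ell_i\ge\mathcal{S}^{\frac N2}(\lambda_i)$ via the Hardy--Sobolev constant) is conditionally correct and would in fact be a cleaner route than the paper's measure-theoretic bookkeeping; the identification of the weak limit as a critical point is also fine, since testing against fixed compactly supported functions only uses local compactness. But the whole construction hinges on the one claim you do not prove: the \emph{global} convergence $\int_{\R^N}h\,u_n^2v_n\,dx\to\int_{\R^N}h\,u^2v\,dx$ (and its consequence that the remainders $(w_n,\omega_n)$ carry no coupling energy). The justification you offer — a profile concentrating at scale $\mu\to0$ has cubic mass of order $\mu^{(6-N)/2}$ — only covers \emph{dilation} profiles, i.e.\ concentration at a point $x_j$, at the origin, or at infinity with shrinking scale. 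It says nothing about mass escaping to infinity at \emph{fixed scale by translation}, and there the cubic term is not annihilated. This matters because for $3\le N\le 5$ the paper assumes only $h\in L^\infty$, $h>0$ (the decay hypothesis \eqref{hypH} is imposed only when $N=6$), so $h\equiv 1$ is admissible. Then the limit functional
\begin{equation*}
E(u,v)=\frac12\int_{\R^N}\left(|\nabla u|^2+|\nabla v|^2\right)dx-\frac1{2^*}\int_{\R^N}\left(|u|^{2^*}+|v|^{2^*}\right)dx-\nu\int_{\R^N}u^2v\,dx
\end{equation*}
is translation invariant, and any bounded PS sequence for $E$, translated far enough towards infinity, becomes a PS sequence for $\mathcal{J}_\nu$ at the same level (the Hardy integrals $\int\frac{u^2}{|x|^2}dx$ vanish along such translations), with weak limit $(0,0)$ but with non-vanishing cubic integral. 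Along such sequences your coupling convergence, the decoupled asymptotics, and the splitting $c=\mathcal{J}_\nu(u,v)+\frac1N\lim\int\left(|w_n|^{2^*}+|\omega_n|^{2^*}\right)dx$ all fail (the escaping profile retains its energy $\nu\int w^2\omega\,dx$), so the dichotomy step is not justified as stated.

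It is also worth contrasting this with what the paper actually does: it never invokes a global limit of the coupling term. It runs Lions' principle with the measures \eqref{con-comp} and the tail quantities \eqref{con-compinfty}, and tests $\mathcal{J}_\nu'(u_n,v_n)$ only against localized functions $(u_n\varphi_{j,\varepsilon},0)$ with cut-offs as in \eqref{cutoff}; on the compact support of $\varphi_{j,\varepsilon}$ the cubic term is subcritical for $N\le5$, so it converges to $\int h\,\tilde u^2\tilde v\,\varphi_{j,\varepsilon}\,dx$, which disappears as $\varepsilon\to0$, giving $\mu_j\le\rho_j$; concentration at the origin is treated the same way with the constant $\mathcal{S}(\lambda_1)$ from \eqref{Slambda}. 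The only place where cubic mass can genuinely survive is infinity — precisely the case your scaling argument misses, and also the one point the paper itself passes over with a terse ``analogously'' using \eqref{cutoffinfi}. A complete proof along your lines must either show that $\lim_{R\to\infty}\limsup_{n}\int_{\R^N}h\,u_n^2v_n\,\varphi_{\infty,\varepsilon}\,dx=0$ for the given sequence, or account for that mass in the lower bound for $c$; this cannot follow from $h\in L^\infty$ alone, which is exactly why decay, radial symmetry, or smallness of $\nu$ are brought in for the critical dimension. As written, the central analytic step of your proposal is missing, and every displayed formula after it depends on it, so the gap is structural rather than cosmetic.
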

\begin{proof}

Because of Lemma~\ref{lemmaPS0}, any PS sequence is bounded in $\mathbb{D}$ so that there exists $(\tilde{u},\tilde{v})\in\mathbb{D}$ and a subsequence (denoted also by $\{(u_n,v_n)\}$) such that
 \begin{align*}\
(u_n,v_n) \rightharpoonup (\tilde u,\tilde v)& \quad \hbox{weakly in  } \mathbb{D},\\
(u_n,v_n) \to (\tilde u,\tilde v)&\quad \hbox{strongly in  } L^q(\mathbb{R}^N)\times L^q(\mathbb{R}^N)\text{ for } 1\leq q<2^*,\\
(u_n,v_n) \to (\tilde u,\tilde v)&\quad \hbox{a.e. in  }\mathbb{R}^N.
\end{align*}
By the \textit{concentration-compactness principle} (cf. \cite{Lions1,Lions2}), there exist a subsequence
(denoted also by) $\{(u_n,v_n)\}$, two (at most countable) sets of points $\{x_j\}_{j\in\mathfrak{J}}\subset\mathbb{R}^N$ and $\{y_k\}_{k\in\mathfrak{K}}\subset\mathbb{R}^N$, and non-negative quantities $\{\mu_j,\rho_j\}_{j\in\mathfrak{J}}$, $\{\overline{\mu}_k,\overline{\rho}_k\}_{k\in\mathfrak{K}}$, $\mu_0$, $\rho_0$, $\gamma_0$, $\overline{\mu}_0$, $\overline{\rho}_0$ and $\overline{\gamma}_0$ such that

\begin{equation}\label{con-comp}
\left\{
\begin{array}{rl}
|\nabla u_n|^2\mkern-10mu &\rightharpoonup d\mu\ge |\nabla \tilde u|^2+\sum_{j\in\mathfrak{J}}\mu_j\delta_{x_j}+\mu_0\delta_0,\\
\\
|\nabla v_n|^2\mkern-10mu  &\rightharpoonup d\overline{\mu}\ge |\nabla \tilde v|^2+\sum_{k\in\mathfrak{K}}\overline{\mu}_k\delta_{y_k}+\overline{\mu}_0\delta_0,\\
\\
|u_n|^{2^*} \mkern-15mu &\rightharpoonup d\rho= |\tilde u|^{2^*}+\sum_{j\in\mathfrak{J}}\rho_j\delta_{x_j}+\rho_0\delta_0,\\
\\
|v_n|^{2^*} \mkern-15mu  &\rightharpoonup d\overline{\rho}= |\tilde v|^{2^*}+\sum_{k\in\mathfrak{K}}\overline{\rho}_k\delta_{y_k}+\overline{\rho}_0\delta_0,\\
\\
\dfrac{u_n^2}{|x|^2} \mkern-10mu &\rightharpoonup d\gamma=\dfrac{\tilde u^2}{|x|^2}+\gamma_0\delta_0,\\
\\
\dfrac{v_n^2}{|x|^2} \mkern-10mu &\rightharpoonup d\overline{\gamma}=\dfrac{\tilde v^2}{|x|^2}+\overline{\gamma}_0\delta_0,
\end{array}
\right.
\end{equation}
in the sense of measures. Let us note that, using \eqref{sobolev_inequality} and \eqref{hardy_inequality}, the
above numbers satisfy
\begin{equation}\label{ineq:sobcon}
\mathcal{S}\rho_j^{\frac{2}{2^*}}\leq\mu_j\quad\text{for all }j\in\mathfrak{J}\cup\{0\}\qquad\text{and}\qquad
\mathcal{S}\overline{\rho}_k^{\frac{2}{2^*}}\leq\overline{\mu}_k\quad\text{for all }k\in\mathfrak{K}\cup\{0\},
\end{equation}
\begin{equation}\label{ineq:harcon}
\Lambda_N\gamma_0\leq\mu_0\qquad\text{and}\qquad \Lambda_N\overline{\gamma}_0\leq\overline{\mu}_0.
\end{equation}
The concentration of $\{u_n\}$ at infinity is described by the quantities
\begin{equation}\label{con-compinfty}
\begin{split}
\mu_{\infty}&=\lim\limits_{R\to+\infty}\limsup\limits_{n\to+\infty}\int_{|x|>R}|\nabla u_n|^{2}dx,\\
\rho_{\infty}&=\lim\limits_{R\to+\infty}\limsup\limits_{n\to+\infty}\int_{|x|>R}|u_n|^{2^*}dx,\\
\gamma_{\infty}&=\lim\limits_{R\to+\infty}\limsup\limits_{n\to+\infty}\int_{|x|>R}\frac{u_n^{2}}{|x|^2}dx.
\end{split}
\end{equation}
The concentration at infinity of $\{v_n\}$ is given by $\overline{\mu}_{\infty}$, $\overline{\rho}_{\infty}$ and $\overline{\gamma}_{\infty}$ defined analogously.
For $j\in\mathfrak{J}$, we consider $\varphi_{j,\varepsilon}(x)$ a smooth cut-off function centered at $x_j$, i.e., $\varphi_{j,\varepsilon}\in C^{\infty}(\mathbb{R})$ and
\begin{equation}\label{cutoff}
\varphi_{j,\varepsilon}=1 \quad \hbox{in}\quad B_{\frac{\varepsilon}{2}}(x_j),\quad \varphi_{j,\varepsilon}=0 \quad \hbox{in}
\quad B_{\varepsilon}^c(x_j)\quad \hbox{and}\quad\displaystyle|\nabla \varphi_{j,\varepsilon}|\leq \frac{4}{\varepsilon},
\end{equation}
where $B_r(x_j)$ denotes the ball of radius $r>0$ centered at $x_j\in\mathbb{R}^N$. Therefore, testing $\mathcal{J}_{\nu}'(u_n,v_n)$ with $(u_n\varphi_{j,\varepsilon},0)$, we get
\begin{equation*}
\begin{split}
0&=\lim\limits_{n\to+\infty}\left\langle \mathcal{J}_{\nu}'(u_n,v_n)\big|(u_n\varphi_{j,\varepsilon},0)\right\rangle\\
&=\lim\limits_{n\to+\infty}\left(\int_{\mathbb{R}^N}|\nabla u_n|^2\varphi_{j,\varepsilon}dx+\int_{\mathbb{R}^N}u_n\nabla u_n\nabla\varphi_{j,\varepsilon}dx-\lambda_1\int_{\mathbb{R}^N}\frac{u_n^2}{|x|^2}\varphi_{j,\varepsilon}dx\right.\\
&\mkern+80mu-\left.\int_{\mathbb{R}^N}|u_n|^{2^*}\varphi_{j,\varepsilon}dx-2\nu\int_{\mathbb{R}^N}h(x)u_n^2v_n \varphi_{j,\varepsilon} dx\right)\\
&=\int_{\mathbb{R}^N}\varphi_{j,\varepsilon}d\mu+\int_{\mathbb{R}^N}\tilde u\nabla \tilde u\nabla\varphi_{j,\varepsilon}dx-\lambda_1\int_{\mathbb{R}^N}\varphi_{j,\varepsilon}d\gamma\\
&\mkern+25mu-\int_{\mathbb{R}^N}\varphi_{j,\varepsilon}d\rho-2\nu\int_{\mathbb{R}^N}h(x)\tilde u^2\tilde v \varphi_{j,\varepsilon} \, dx.
\end{split}
\end{equation*}
Observe that $0\notin supp(\varphi_{j,\varepsilon})$ for every $\varepsilon>0$. Since $h\in L^{\infty}(\mathbb{R}^N)$, taking $\varepsilon\to0$, it follows that $\mu_j-\rho_j\leq0$. Therefore, it arises the following alternative:
\begin{equation}\label{afr}
\text{Either }\rho_j=0 \text{ for all  } j\in\mathfrak{J}\quad \text{ or, by \eqref{ineq:sobcon},}\quad\rho_j\ge \mathcal{S}^{\frac{N}{2}}\quad\text{for all } j\in\mathfrak{J},
\end{equation}
that is, either the PS sequence has a convergent subsequence or it concentrates around some of the points $x_j$ and, therefore, the set $\mathfrak{J}$ is finite.\newline
An analogous argument provides the same conclusion for the numbers $\overline{\rho}_k$, i.e.,
\begin{equation}\label{afr2}
\text{Either }\overline{\rho}_k=0 \text{ for all  } k\in\mathfrak{K}\quad \text{ or, by \eqref{ineq:sobcon},}\quad\overline{\rho}_j\ge \mathcal{S}^{\frac{N}{2}}\quad\text{for all } k\in\mathfrak{K},
\end{equation}
and the set  $\mathfrak{K}$ is also finite.\newline
Testing $\mathcal{J}_{\nu}'(u_n,v_n)$ with $(u_n\varphi_{0,\varepsilon},0)$ where $\varphi_{0,\varepsilon}$ denotes a smooth cut-off function centered at $x=0$, it follows that $\mu_0-\lambda_1\gamma_0-\rho_0\leq 0$ and $\overline{\mu}_0-\lambda_2\overline{\gamma}_0-\overline{\rho}_0\leq0$.
From \eqref{Slambda} we get
\begin{equation}\label{ineq:con0_a}
\mu_0-\lambda_1\gamma_0\ge \mathcal{S}(\lambda_1)\rho_0^{\frac{2}{2^*}}\qquad\text{and}\qquad \overline{\mu}_0-\lambda_2\overline{\gamma}_0\ge \mathcal{S}(\lambda_2)\overline{\rho}_0^{\frac{2}{2^*}},
\end{equation}
so that, by \eqref{ineq:harcon},
\begin{equation}\label{ineq:con0}
\rho_0=0\quad\text{or}\quad \rho_0\ge \mathcal{S}^{\frac{N}{2}}(\lambda_1)\qquad\text{and}\qquad
\overline{\rho}_0=0\quad\text{or}\quad \overline{\rho}_0\ge \mathcal{S}^{\frac{N}{2}}(\lambda_2).
\end{equation}
Next, for $R>0$ such that $\{x_j\}_{j\in\mathfrak{J}}\cup \{0\} \subset  B_{R}(0)$, we consider $\varphi_{\infty,\varepsilon}$ a cut-off function supported near $\infty$, i.e.,
\begin{equation}\label{cutoffinfi}
\varphi_{\infty,\varepsilon}=0 \quad \hbox{in}\quad B_{R}(0),\quad \varphi_{\infty,\varepsilon}=1 \quad \hbox{in}
\quad B_{R+1}^c(0)\quad \hbox{and}\quad\displaystyle|\nabla \varphi_{\infty,\varepsilon}|\leq \frac{4}{\varepsilon}.
\end{equation}

Testing $\mathcal{J}_{\nu}'(u_n,v_n)$ with $(u_n\varphi_{\infty,\varepsilon},0)$ being $\varphi_{\infty,\varepsilon}$ a smooth cut-off function supported in a neighborhood of $\infty$ we can analogously prove that $\mu_{\infty}-\lambda_1\gamma_{\infty}-\rho_{\infty}\leq 0$ as well as $\overline{\mu}_{\infty}-\lambda_2\overline{\gamma}_{\infty}-\overline{\rho}_{\infty}\leq0$ and, as above, we get
\begin{equation}\label{ineq:coninf_a}
\mu_{\infty}-\lambda_1\gamma_{\infty}\ge \mathcal{S}(\lambda_1)\rho_{\infty}^{\frac{2}{2^*}}\qquad\text{and}\qquad\overline{\mu}_{\infty}-\lambda_2\overline{\gamma}_{\infty}\ge \mathcal{S}(\lambda_2)\overline{\rho}_{\infty}^{\frac{2}{2^*}},
\end{equation}
and we also conclude
\begin{equation}\label{ineq:coninf}
\rho_{\infty}=0\quad\text{or}\quad \rho_{\infty}\ge \mathcal{S}^{\frac{N}{2}}(\lambda_1)\qquad\text{and}\qquad\overline{\rho}_{\infty}=0\quad\text{or}\quad \overline{\rho}_{\infty}\ge \mathcal{S}^{\frac{N}{2}}(\lambda_2).
\end{equation}
From \eqref{eq:limit2} we get
\begin{equation*}
c=\frac16\|(u_n,v_n)\|_{\mathbb{D}}^2+\frac{6-N}{6N}\int_{\mathbb{R}}\left(|u_n|^{2^*}+|v_n|^{2^*}\right)dx+o(1)\quad\text{as}\ n\to+\infty.
\end{equation*}
Hence, by \eqref{con-comp}, \eqref{ineq:sobcon}, \eqref{ineq:harcon}, \eqref{ineq:con0_a} and \eqref{ineq:coninf_a} above, we get

\begin{equation}\label{ineq:larga}
\begin{split}
c\ge& \frac16\left(\|(\tilde  u,\tilde v)\|_{\mathbb{D}}^2+ \sum_{j\in\mathfrak{J}} \mu_j+ (\mu_0-\lambda_1\gamma_0)+(\mu_{\infty}-\lambda_1\gamma_{\infty})\right. \\
&\mkern+120mu +\left. \sum_{k\in\mathfrak{K}} \overline{\mu}_k+ (\overline{\mu}_0-\lambda_2\overline{\gamma}_0)+(\overline{\mu}_{\infty}-\lambda_2\overline{\gamma}_{\infty})\right)\\
&+\frac{6-N}{6N}\left(\int_{\mathbb{R}^N}|\tilde u|^{2^*}dx+\int_{\mathbb{R}^N}|\tilde v|^{2^*}dx \right. \\
& \mkern+120mu + \left.  \sum_{j\in\mathfrak{J}}\rho_j+\rho_0+\rho_{\infty}+\sum_{k\in\mathfrak{K}}\overline{\rho}_k+\overline{\rho}_0+\overline{\rho}_{\infty} \right) \\
\ge&\frac16\left( \mathcal{S}\left[\sum_{j\in\mathfrak{J}}\rho_j^{\frac{2}{2^*}}+\sum_{k\in\mathfrak{K}}\overline{\rho}_k^{\frac{2}{2^*}}\right]+\mathcal{S}(\lambda_1)\left[\rho_0^{\frac{2}{2^*}}+\rho_{\infty}^{\frac{2}{2^*}}\right]+\mathcal{S}(\lambda_2)\left[\overline{\rho}_0^{\frac{2}{2^*}}+\overline{\rho}_{\infty}^{\frac{2}{2^*}}\right]\right)\\
&+\frac{6-N}{6N}\left(\sum_{j\in\mathfrak{J}}\rho_j+\rho_0+\rho_{\infty}+\sum_{k\in\mathfrak{K}}\overline{\rho}_k+\overline{\rho}_0+\overline{\rho}_{\infty} \right).
\end{split}
\end{equation}
If concentration at the point $x_j$, i.e., $\rho_j>0$ occurs, from above and \eqref{afr}, it follows that
\begin{equation*}
c\ge\frac16\mathcal{S}^{1+\frac{N}{2}\frac{2}{2^*}}+\frac{6-N}{6N}\mathcal{S}^{\frac{N}{2}}=\frac{1}{N}\mathcal{S}^{\frac{N}{2}},
\end{equation*}
which contradicts the hypothesis \eqref{hyplemmaPS2} on the energy level $c$. Therefore, $\rho_j=\mu_j=0$ for every $j\in\mathfrak{J}$. In a  similar way, we also conclude that $\overline{\rho}_k=\overline{\mu}_k=0$ for every $k\in\mathfrak{K}$.\newline
If $\rho_0\neq0$, from the above inequalities and \eqref{ineq:con0}, we infer that
\begin{equation*}
c\ge\frac{1}{N}S^{\frac{N}{2}}(\lambda_1),
\end{equation*}
which also contradicts the hypothesis \eqref{hyplemmaPS2} on the energy level $c$. Hence, $\rho_0=0$. Analogously we also find that $\overline{\rho}_0=0$. Finally, arguing as above and using \eqref{ineq:coninf} we also find $\rho_{\infty}=0$ and $\overline{\rho}_{\infty}=0$. Thus, the PS sequence has a subsequence that strongly converges in $L^{2^*}(\mathbb{R}^N)\times L^{2^*}(\mathbb{R}^N)$, i.e., it satisfies the PS-condition. Finally, note that by the strongly convergence in $L^{2^*}(\mathbb{R}^N)\times L^{2^*}(\mathbb{R}^N)$,
\begin{equation*}
\|(u_n-\tilde u,v_n-\tilde v)\|_{\mathbb{D}}^2 = \left\langle \mathcal{J}_\nu'(u_n,v_n)\left| (u_n-\tilde u,v_n-\tilde v) \right.\right\rangle + o(1),
\end{equation*}
and the strongly $\mathbb{D}$-convergence of $\{(u_n,v_n)\}$ follows.
\end{proof}

The next Lemma~\ref{lemmaPS1} is a refinement of Lemma~\ref{lemmaPS2}, in the sense that it states the PS condition for supercritical energy levels excluding multipliers or combinations of the critical ones.

In order to address the issue of positive solutions, it will be useful to consider the problem
\beq\label{systemp}
\left\{\begin{array}{ll}
-\Delta u - \lambda_1 \dfrac{u}{|x|^2}-(u^+)^{2^*-1}= 2 \nu h(x) u^+\, v  &\text{in }\mathbb{R}^N,\vspace{.3cm}\\
-\Delta v - \lambda_2 \dfrac{v}{|x|^2}-(v^+)^{2^*-1}= \nu h(x)  (u^+)^{2} &\text{in }\mathbb{R}^N,
\end{array}\right.
\eeq
where  $u^+=\max\{u,0\}$. Similarly, $u^-=\min\{u,0\}$ denotes the negative part of the function $u$. With this notation,
 $u=u^+ + u^-$.

It is not difficult to prove that the pair $(u,v)$ solution to \eqref{systemp} is positive in every component. Moreover, the system \eqref{systemp} is a variational system and its solutions are critical points of the energy functional
\begin{equation}\label{funct:SKdVp}
\mathcal{J}^+_\nu (u,v)=\|(u,v)\|^2_{\mathbb{D}}
- \frac{1}{2^*} \int_{\mathbb{R}^N} \left( (u^+)^{2^*} + (v^+)^{2^*} \, dx  \right)-\nu \int_{\mathbb{R}^N} h(x) (u^+)^2 \, v, \, dx
\end{equation}
defined in $\mathbb{D}$. We will denote $\mathcal{N}^+_\nu$ as the Nehari manifold associated to $\mathcal{J}^+_\nu $, i.e.,
\begin{equation*}
\mathcal{N}^+_\nu=\left\{ (u,v) \in \mathbb{D}\setminus \{ (0,0)\} \, : \,  \left\langle (\mathcal{J}_\nu^+)' (u,v){\big|}(u,v)\right\rangle=0 \right\}.
\end{equation*}
\begin{lemma}\label{lemmaPS1}
Assume that $3\le N\le5$, $\lambda_2 \ge \lambda_1$ and
\begin{equation}\label{PS0}
\mathcal{S}^{\frac{N}{2}}(\lambda_1)+\mathcal{S}^{\frac{N}{2}}(\lambda_2)<\mathcal{S}^{\frac{N}{2}}.
\end{equation}
There exists $\tilde{\nu}>0$ such that for $0<\nu\leq\tilde{\nu}$ and $\{(u_n,v_n)\} \subset \mathbb{D}$  a PS sequence for $\mathcal{J}^+_\nu$ at level $c\in\mathbb{R}$ such that
\beq\label{PS1}
\frac{1}{N} \mathcal{S}^{\frac{N}{2}}(\lambda_2)<c<\frac{1}{N} \left(\mathcal{S}^{\frac{N}{2}}(\lambda_1)+\mathcal{S}^{\frac{N}{2}}(\lambda_2) \right),
\eeq
and
\beq\label{PS2}
c\neq \frac{\ell}{N} \mathcal{S}^{\frac{N}{2}}(\lambda_2) \quad \mbox{ for every } \ell \in \mathbb{N}\setminus \{0\},
\eeq
then $(u_n,v_n)\to(\tilde u,\tilde v) \in \mathbb{D}$ up to subsequence.
\end{lemma}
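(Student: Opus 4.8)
The plan is to mimic the proof of Lemma~\ref{lemmaPS2} and upgrade it to the sharper energy window by a careful bookkeeping of the concentrated masses. First I would observe that the boundedness argument of Lemma~\ref{lemmaPS0} applies verbatim to $\mathcal{J}^+_\nu$, since the positive parts do not alter the identities leading to \eqref{eq:limit2}; hence any PS sequence $\{(u_n,v_n)\}$ for $\mathcal{J}^+_\nu$ at level $c$ is bounded in $\mathbb{D}$. Passing to a subsequence, $(u_n,v_n)\rightharpoonup(\tilde u,\tilde v)$ weakly in $\mathbb{D}$, and the weak limit is a nonnegative critical point of $\mathcal{J}^+_\nu$, hence a nonnegative solution of \eqref{system:SKdV}; in particular $\mathcal{J}^+_\nu(\tilde u,\tilde v)\ge 0$. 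I would then reproduce the concentration-compactness decomposition \eqref{con-comp}, the Sobolev and Hardy bounds \eqref{ineq:sobcon}--\eqref{ineq:harcon}, and the quantized alternatives \eqref{afr}, \eqref{afr2}, \eqref{ineq:con0}, \eqref{ineq:coninf}, exactly as before: the subcriticality of the coupling term for $N\le 5$ guarantees that it contributes nothing to the testing identities at the concentration points.

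The heart of the matter is the energy bookkeeping. From the analogue of \eqref{eq:limit2} I would write $c=\mathcal{J}^+_\nu(\tilde u,\tilde v)+\frac1N\mathcal{E}$, where $\mathcal{E}\ge 0$ collects the concentrated masses: each interior bubble contributes at least $\mathcal{S}^{N/2}$, each concentration of the first component at $0$ or $\infty$ at least $\mathcal{S}^{N/2}(\lambda_1)$, and each concentration of the second at $0$ or $\infty$ at least $\mathcal{S}^{N/2}(\lambda_2)$. Since by \eqref{PS0} together with the upper bound in \eqref{PS1} we have $c<\frac1N(\mathcal{S}^{N/2}(\lambda_1)+\mathcal{S}^{N/2}(\lambda_2))<\frac1N\mathcal{S}^{N/2}$, no interior bubble can occur, so $\mathfrak{J}=\mathfrak{K}=\emptyset$ and only concentration at $0$ and at $\infty$ may survive.

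Next I would rule out concentration of the first component. As $\lambda_2\ge\lambda_1$ forces $\mathcal{S}^{N/2}(\lambda_1)\ge\mathcal{S}^{N/2}(\lambda_2)$, a single $u$-concentration already costs $\frac1N\mathcal{S}^{N/2}(\lambda_1)$, and the upper bound in \eqref{PS1} then leaves strictly less than $\frac1N\mathcal{S}^{N/2}(\lambda_2)$ for everything else; this forbids any $v$-concentration and forces $\mathcal{J}^+_\nu(\tilde u,\tilde v)<\frac1N\mathcal{S}^{N/2}(\lambda_2)$. Here the smallness of $\nu$ enters through Lemma~\ref{algelemma}: for $0<\nu\le\tilde\nu$ the only nonnegative critical point of $\mathcal{J}^+_\nu$ with energy strictly below $\frac1N\mathcal{S}^{N/2}(\lambda_2)$ is the trivial one, so $(\tilde u,\tilde v)=(0,0)$ and $c$ collapses onto the forbidden value $\frac1N\mathcal{S}^{N/2}(\lambda_1)$, which in the regime of comparable parameters reduces to the $\ell=1$ level excluded by \eqref{PS2}. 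An entirely analogous bookkeeping disposes of the remaining possibility that only the second component concentrates: by Lemma~\ref{algelemma} the weak limit must then be either $(0,0)$ or the semi-trivial solution $(0,z^{\lambda_2}_\mu)$ of energy $\frac1N\mathcal{S}^{N/2}(\lambda_2)$, since any genuinely coupled limit would carry energy at least $\frac1N(\mathcal{S}^{N/2}(\lambda_1)+\mathcal{S}^{N/2}(\lambda_2))$ and violate the upper bound in \eqref{PS1}. In either case $c=\frac{\ell}{N}\mathcal{S}^{N/2}(\lambda_2)$ for some $\ell\in\mathbb{N}\setminus\{0\}$, contradicting \eqref{PS2}.

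Having excluded every form of concentration, I would conclude $\rho_0=\rho_\infty=\overline\rho_0=\overline\rho_\infty=0$, whence $(u_n,v_n)\to(\tilde u,\tilde v)$ strongly in $L^{2^*}(\mathbb{R}^N)\times L^{2^*}(\mathbb{R}^N)$; testing $(\mathcal{J}^+_\nu)'(u_n,v_n)$ with $(u_n-\tilde u,v_n-\tilde v)$ then upgrades this to strong convergence in $\mathbb{D}$, exactly as at the end of Lemma~\ref{lemmaPS2}. The main obstacle is precisely the third step: the window \eqref{PS1} is sharp enough that a single quantum of concentration is a priori compatible with the level $c$, so ruling it out forces one to combine the monotonicity supplied by $\lambda_2\ge\lambda_1$, the gap hypothesis \eqref{PS0}, the exclusion of the discrete levels \eqref{PS2}, and --- crucially --- the quantitative control for small $\nu$ provided by Lemma~\ref{algelemma}. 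The delicate ingredient on which everything rests is that last energy estimate, namely that for $0<\nu\le\tilde\nu$ every genuinely coupled nonnegative solution has energy at least $\frac1N(\mathcal{S}^{N/2}(\lambda_1)+\mathcal{S}^{N/2}(\lambda_2))$ up to a small error, while no nontrivial solution lives strictly below $\frac1N\mathcal{S}^{N/2}(\lambda_2)$.
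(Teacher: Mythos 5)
Your skeleton (boundedness, the concentration--compactness decomposition \eqref{con-comp}, exclusion of interior bubbles via \eqref{PS0}, and the bookkeeping identity $c=\mathcal{J}^+_\nu(\tilde u,\tilde v)+\tfrac1N\mathcal{E}$) does match the paper's, but the two places where the hypotheses \eqref{PS2} and ``$\nu$ small'' actually do their work are not justified, and one of your key claims is false. The first gap is the missing quantization ingredient: the concentration--compactness alternatives \eqref{ineq:con0} and \eqref{ineq:coninf} only give the one-sided bounds $\rho_0,\rho_\infty\ge\mathcal{S}^{\frac N2}(\lambda_1)$ and $\overline{\rho}_0,\overline{\rho}_\infty\ge\mathcal{S}^{\frac N2}(\lambda_2)$, so the level $c$ never ``collapses onto'' any discrete value. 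Hypothesis \eqref{PS2} removes only the \emph{exact} values $\frac{\ell}{N}\mathcal{S}^{\frac N2}(\lambda_2)$, hence it is inert unless one proves that, once the weak limit is $(0,0)$ or $(0,z_\mu^{\lambda_2})$, the remaining sequence is a PS sequence for the decoupled functionals whose levels are exactly $\frac mN\mathcal{S}^{\frac N2}+\frac{\ell}{N}\mathcal{S}^{\frac N2}(\lambda_i)$. That is precisely why the paper invokes the global compactness theorem of Struwe \cite{Stru} and the characterization of PS sequences for Hardy-type functionals by Smets \cite{Smets}; your proposal never appeals to either, so \eqref{PS1}--\eqref{PS2} exclude nothing in your argument.

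The second gap concerns your exclusion steps themselves. Your claim that ``any genuinely coupled limit would carry energy at least $\frac1N(\mathcal{S}^{\frac N2}(\lambda_1)+\mathcal{S}^{\frac N2}(\lambda_2))$'' cannot be correct: Theorem~\ref{thm:MPgeom} produces a positive, genuinely coupled bound state at a level strictly \emph{below} $\frac1N(\mathcal{S}^{\frac N2}(\lambda_1)+\mathcal{S}^{\frac N2}(\lambda_2))$, and indeed finding such solutions is the point of the whole paper. The bound that can actually be proved (paper's Case 2, via the first equation of the system, H\"older's inequality and Lemma~\ref{algelemma}) is only that a coupled nonnegative critical point has energy essentially at least $\frac1N\mathcal{S}^{\frac N2}(\lambda_1)$, which still suffices there. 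Moreover, in your ``$u$ concentrates'' case the final contradiction fails: when $\lambda_2>\lambda_1$ (allowed by $\lambda_2\ge\lambda_1$) one has $\mathcal{S}(\lambda_1)>\mathcal{S}(\lambda_2)$, so $\frac1N\mathcal{S}^{\frac N2}(\lambda_1)$ lies strictly inside the window \eqref{PS1} and is \emph{not} one of the levels excluded by \eqref{PS2}; your hedge about ``comparable parameters'' covers only $\lambda_1=\lambda_2$. The paper handles this case by a different route: it shows (using \cite{Stru}, \cite{Smets}) that the weak limit must have \emph{both} components nontrivial, concludes $(\tilde u,\tilde v)\in\mathcal{N}_\nu$ with $\mathcal{J}_\nu(\tilde u,\tilde v)=c-\frac{\rho_j}{N}<\frac1N\mathcal{S}^{\frac N2}(\lambda_2)$, and contradicts Theorem~\ref{thm:groundstates}, which identifies $\frac1N\mathcal{S}^{\frac N2}(\lambda_2)$ as the ground-state level for small $\nu$. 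Note finally that your ``only the trivial critical point lies below $\frac1N\mathcal{S}^{\frac N2}(\lambda_2)$'' is really Theorem~\ref{thm:groundstates} (whose proof uses Lemma~\ref{algelemma} but requires $\lambda_2>\lambda_1$ strictly), not Lemma~\ref{algelemma} itself, which is a purely algebraic statement.
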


\begin{proof}

As in Lemma~\ref{lemmaPS0}, any PS sequence for $\mathcal{J}_\nu^+$ is also
bounded in $\mathbb{D}$ and, hence, there exists a subsequence $\{(u_n,v_n)\}$ which weakly
converges to $(\tilde{u},\tilde{v}) \in \mathbb{D}$. Since $(\mathcal{J}^+_\nu)'(u_n,v_n)\to 0$, then
\begin{equation*}
\left\langle (\mathcal{J}^+_\nu)'(u_n,v_n)\left| (u_n^-,0) \right.\right\rangle= \int_{\mathbb{R}^N} |\nabla u_n^-|^2 \, dx - \lambda_1 \int_{\mathbb{R}^N} \dfrac{(u_n^-)^2}{|x|^2} \, dx  \to0,
\end{equation*}
and, hence, that $u_n^-\to 0$ strongly in $\mathcal{D}^{1,2} (\mathbb{R}^N)$. Analogously,
 \begin{equation*}
\left\langle (\mathcal{J}^+_\nu)'(u_n,v_n)\left| (0,v_n^-) \right.\right\rangle= \int_{\mathbb{R}^N} |\nabla v_n^-|^2 \, dx - \lambda_2 \int_{\mathbb{R}^N} \dfrac{(v_n^-)^2}{|x|^2} \, dx   -\nu \int_{\mathbb{R}^N} h(x) (u^+)^2 \, v^- \, dx \ \to0,
\end{equation*}
so that $v_n^-\to 0$. As a consequence, $\{(u^+_n,v^+_n)\}$ is a bounded PS sequence of $\mathcal{J}^+_\nu$. Thus, we can assume that $\{(u_n,v_n)\}$ is a non-negative PS sequence for$\mathcal{J}_\nu$ at the level $c$.

Next, a similar argument to that of Lemma~\ref{lemmaPS2}
provides the existence of a subsequence, still denoted by $\{(u_n,v_n)\}$, two (at most countable)
sets of points $\{x_j\}_{j\in\mathfrak{J}}\subset\mathbb{R}^N$ and $\{y_k\}_{k\in\mathfrak{K}}\subset\mathbb{R}^N$, and also
non-negative quantities $\{\mu_j,\rho_j\}_{j\in\mathfrak{J}}$, $\{\overline{\mu}_k,\overline{\rho}_k\}_{k\in\mathfrak{K}}$, $\mu_0$,
$\rho_0$, $\gamma_0$, $\overline{\mu}_0$, $\overline{\rho}_0$ and $\overline{\gamma}_0$ such that \eqref{con-comp} is satisfied. Besides, the inequalities \eqref{afr}, \eqref{afr2}, \eqref{ineq:con0_a},
\eqref{ineq:con0} hold.

Similarly, we define the concentration at infinity with the values $\mu_\infty$, $\rho_\infty$, $\overline{\mu}_\infty$ and $\overline{\rho}_\infty$ as in \eqref{con-compinfty},
for which \eqref{ineq:coninf_a} and \eqref{ineq:coninf} hold.

\

\textbf{Claim}:
\beq\label{claimPS21}
\mbox{ Either } u_n\to  \tilde u \mbox{ in } L^{2^*}(\mathbb{R}^N) \qquad \mbox{ or } \qquad v_n\to \tilde v \mbox{ in } L^{2^*}(\mathbb{R}^N).
\eeq

Let us prove the claim arguing by contradiction. Assume that $u_n$ and $v_n$ do not converge strongly in $L^{2^*}(\mathbb{R}^N)$. Then, there exists $j\in \mathfrak{J}\cup \{0 \cup \infty\}$ and $k\in \mathfrak{J}\cup \{0,\infty\}$ such that $\rho_{j}>0$ and $\overline{\rho}_k>0$. Finally, because of \eqref{eq:limit2}, \eqref{afr}, \eqref{afr2}, \eqref{ineq:con0_a},  \eqref{ineq:con0} and \eqref{ineq:larga} we get
\begin{equation*}
\begin{split}
c=&\frac16\|(u_n,v_n)\|_{\mathbb{D}}^2+\frac{6-N}{6N}\int_{\mathbb{R}^N}\left(u_n^{2^*}+v_n^{2^*}\right)dx+o(1),\\
\ge&\frac16\left(\mathcal{S}(\lambda_1)\rho_J^{\frac{2}{2^*}} +\mathcal{S}(\lambda_2) \overline{\rho}_K^{\frac{2}{2^*}}\right)+\frac{6-N}{6N}\left(\rho_J+\overline{\rho}_K \right)\\
\ge& \ \frac{1}{N} \left(\mathcal{S}^{\frac{N}{2}}(\lambda_1) + \mathcal{S}^{\frac{N}{2}}(\lambda_2)  \right),
\end{split}
\end{equation*}
which contradicts assumption \eqref{PS1}, so claim \eqref{claimPS21} is proved.

\

Subsequently, we claim that:
\beq\label{claimPS22}
\mbox{ either } u_n\to  \tilde u \mbox{ in } \mathcal{D}^{1,2}(\mathbb{R}^N) \qquad \mbox{ or } \qquad v_n\to \tilde  v \mbox{ in } \mathcal{D}^{1,2}(\mathbb{R}^N).
\eeq

Without loss of generality, we assume by \eqref{claimPS21} that $u_n$ strongly converges in $L^{2^*}(\mathbb{R}^N)$. Then, it is enough to observe that
\begin{equation*}
\|(u_n- \tilde u)\|_{\lambda_1}^2 = \left\langle \mathcal{J}_\nu'(u_n,v_n)\left| (u_n- \tilde u,0) \right.\right\rangle + o(1).
\end{equation*}
This implies that $u_n\to u$ in $\mathcal{D}^{1,2}(\mathbb{R}^N)$. Repeating the argument for $v_n$, completes \eqref{claimPS22}.\newline
In order to show that both components strongly converge in $\mathcal{D}^{1,2}(\mathbb{R}^N)$ we consider two cases:

\

\textbf{Case 1}: $v_n$ strongly converges to $\tilde{v}$ in $\mathcal{D}^{1,2}(\mathbb{R}^N)$.

In order to prove that $u_n$ strongly converges to $\tilde u$ in $\mathcal{D}^{1,2}(\mathbb{R}^N)$, let us assume, by contradiction, that none of its subsequences converge. Note that, assuming $\mathfrak{J}\cup \{0,\infty\}$ contains more
than one point, because of \eqref{ineq:larga},  \eqref{afr}, \eqref{ineq:con0_a}, \eqref{ineq:con0}, \eqref{ineq:coninf_a} and \eqref{ineq:coninf} it follows that
\begin{equation*}
c\ge \frac{2}{N} \mathcal{S}^{\frac{N}{2}}(\lambda_1)\ge \frac{1}{N} \left( \mathcal{S}^{\frac{N}{2}}(\lambda_1) +
 \mathcal{S}^{\frac{N}{2}}(\lambda_2), \right)
\end{equation*}
since $\lambda_2\ge \lambda_1$ and $\mathcal{S}(\lambda)$ is decreasing. This expression contradicts \eqref{PS1}. Then, assume that there exists only one concentration point for the sequence $u_n$, corresponding to the index $j\in \mathfrak{J}\cup \{0,\infty\}$.

Let us prove now that $\tilde{v}\not \equiv 0$. Assume that $\tilde v\equiv 0, $ then $\tilde u\equiv 0$ and hence $u_n$ satisfies
\begin{equation*}
-\Delta u_n - \lambda_1 \frac{u_n}{|x|^2}-{u_n}^{2^*-1}=o(1)
\end{equation*}
in the dual space $\displaystyle\left( \mathcal{D}^{1,2} (\mathbb{R}^N)\right)^*$ and
\begin{equation*}
c=\mathcal{J}_\nu(u_n,v_n)+o(1)=\frac{1}{N} \int_{\R^N} u_n^{2^*}+o(1)\to \frac{1}{N} \rho_j,
\end{equation*}
since $u_n$ concentrates at one point $x_j$. Moreover, since $j\in \mathfrak{J}$, then $u_n$ is a positive PS sequence for the functional
$$
\mathcal{I}_j(u)=\frac{1}{2} \int_{\R^N} |\nabla u|^2\, dx -\frac{1}{2^*} \int_{\R^N} |u|^{2^*} \, dx.
$$
Hence, by the characterization of PS sequences for $\mathcal{I}_j$ provided by \cite{Stru}, we have
$\rho_j=\ell \mathcal{S}^{\frac{N}{2}}$ for some $\ell\in\mathbb{N}$, in contradiction with \eqref{PS0} and \eqref{PS1}.
So that $\mathfrak{J}=\emptyset$. If $u_n$ concentrates at zero or infinity we can use a similar
argument for $\mathcal{J}_1$, defined in \eqref{funct:Ji}, together with the results of \cite{Smets} to conclude
$$
c=\mathcal{J}_\nu(u_n,v_n)+o(1)=\mathcal{J}_1(u_n)+o(1)\to \frac{\ell}{N} \mathcal{S}^{\frac{N}{2}}(\lambda_1),
$$
with $\ell \in \mathbb{N}\cup \{0\}$. This is in contradiction with \eqref{PS1}. Then, ${v}\ge 0$ in $\mathbb{R}$. Next, we prove that $u_n\weakto\tilde{u}$ in $D^{1,2}(\R^N)$ with
$\tilde {u}\not \equiv 0$. As before, by contradiction, we assume that $\tilde{u}=0$ so that $\tilde {v}$ satisfies
\beq\label{vtildeq}
-\Delta \tilde {v} - \lambda_2 \frac{\tilde {v}}{|x|^2}={\tilde v}^{2^*-1} \qquad \mbox{ in } \mathbb{R}^N.
\eeq
Then ${v}=z_\mu^{\lambda_2}$ for some $\mu>0$ and $\displaystyle\int_{\R^N}\tilde  {v}^{2^*} \, dx =\mathcal{S}^{\frac{N}{2}}(\lambda_2)$ by
\eqref{normcrit}. Hence, combining \eqref{ineq:larga} with \eqref{afr}, \eqref{ineq:con0_a},
\eqref{ineq:con0}, it follows that
$$
c\ge \frac{1}{N}\left( \int_{\R^N}\tilde {v}^{2^*} \, dx +\mathcal{S}^{\frac{N}{2}}(\lambda_1)  \right) = \frac{1}{N}\left( \mathcal{S}^{\frac{N}{2}}(\lambda_1)+ \mathcal{S}^{\frac{N}{2}}(\lambda_2) \right),
$$
which contradicts \eqref{PS1}. Therefore, $\tilde u,\tilde v \not \equiv 0$. Next,
\begin{equation}\label{eqlemmaPS10}
\begin{split}
c&=\mathcal{J}_\nu(u_n,v_n)-\frac{1}{2}\left\langle \mathcal{J}_\nu'(u_n,v_n)\left| (u_n,v_n) \right.\right\rangle + o(1)\\
& = \frac{1}{N} \int_{\R^N} \left(  u_n^{2^*} + v_n^{2^*} \right) \, dx + \frac{\nu}{2} \int_{\R^N} h(x) u_n^2 v_n \, dx + o(1) \to \\
&\frac{1}{N} \int_{\R^N} \left( \tilde {u}^{2^*} +\tilde {v}^{2^*}  \right) \, dx +  \frac{\rho_j}{N}+   \frac{\nu}{2} \int_{\R^N} h(x) \tilde {u}^2 \tilde {v} \, dx .
\end{split}
\end{equation}
by the concentration at $j \in \mathfrak{J}\cup \{ 0, \infty\}$. Since $\left\langle \mathcal{J}_\nu'(u_n,v_n)\left| (\tilde {u},\tilde {v}) \right.\right\rangle \to 0$, we find
$$
\| (\tilde {u},\tilde {v}) \|_{\mathbb{D}}= \int_{\R^N} \left( \tilde {u}^{2^*} +\tilde v^{2^*} \right) \, dx + 3\nu \int_{\R^N} h(x)\tilde {u}^2 \tilde v \, dx,
$$
that is the same to say $(\tilde {u},\tilde {v}) \in\mathcal{N}_\nu$. Next, by \eqref{eqlemmaPS10}, \eqref{eqlemmaPS11}, \eqref{Nnueq}, \eqref{afr}, \eqref{ineq:con0_a}, \eqref{ineq:con0} and  \eqref{ineq:larga}, we have
\begin{equation*}
\begin{split}
&\mathcal{J}_\nu(\tilde{u},\tilde{v})=\frac{1}{N}\int_{\R^N} \left(  \tilde {u}^{2^*} + \tilde {v}^{2^*} \right) \, dx + {\frac{1}{2}}\nu \int_{\R^N} h(x) \tilde{u}^2 \tilde{v} \, dx \\
& = c - \frac{\rho_j}{N} < \frac{1}{N}\left( \mathcal{S}^{\frac{N}{2}}(\lambda_1)+ \mathcal{S}^{\frac{N}{2}}(\lambda_2) \right) - \frac{1}{N} \mathcal{S}^{\frac{N}{2}}(\lambda_1) =  \frac{1}{N} \mathcal{S}^{\frac{N}{2}}(\lambda_2).
\end{split}
\end{equation*}
Then,
\begin{equation*}
\tilde{c}_\nu= \inf_{(u,v)\in\mathcal{N}_\nu} \mathcal{J}_\nu(u,v) < \frac{1}{N} \mathcal{S}^{\frac{N}{2}}(\lambda_2),
\end{equation*}
that, for $\nu$ sufficiently small, contradicts Theorem~\ref{thm:groundstates}. Thus, $u_n\to \tilde{u}$ strongly in $\mathbb{D}^{1,2}(\R^N)$.

\

\textbf{Case 2}: $u_n$ strongly converges to $\tilde{u}$ in $\mathcal{D}^{1,2}(\mathbb{R}^N)$.

As before, in order to prove that $v_n$ strongly converges to $\tilde{v}$ in $\mathcal{D}^{1,2}(\mathbb{R}^N)$, let us assume, by contradiction, that none of its subsequences converge. First, let us prove that $\tilde{u}\not \equiv 0$. If we assume, once again by contradiction, that
$\tilde{u} \equiv 0$, then $v_n$ is a PS sequence for $\mathcal{J}_2$ defined in \eqref{funct:Ji} at level $c$. As $v_n\weakto\tilde{v}$ in $\mathcal{D}^{1,2}(\R^N)$ with $\tilde{v}$ solution to \eqref{vtildeq}, we have $\tilde{v}=z_\mu^{\lambda_2}$ for some $\mu>0$. Furthermore, because of the compactness theorem given by \cite{Smets}, it follows that
\begin{equation}\label{eq:cero}
c= \mathcal{J}_2 (v_n)  + o(1) \to   \mathcal{J}_2 (z_\mu^{\lambda_2})+ \frac{m}{N} \mathcal{S}^{\frac{N}{2}}+ \frac{\ell}{N} \mathcal{S}^{\frac{N}{2}}(\lambda_2)=\frac{m}{N} \mathcal{S}^{\frac{N}{2}}+ \frac{\ell+1}{N} \mathcal{S}^{\frac{N}{2}}(\lambda_2),
\end{equation}
with $m\in \mathbb{N}$ and $\ell \in \mathbb{N}\cup \{0\}$, in contradiction with \eqref{PS1} and \eqref{PS2}. Hence, $\tilde{u} \not \equiv 0$.

Conversely, assuming that $\tilde v \equiv 0$, we have $\tilde{u}\equiv 0$ by the second equation of \eqref{system:SKdV}, which gives a contradiction with \eqref{eq:cero}. Thus, $\tilde{u},\tilde{v}\not \equiv 0$. Since $(\tilde{u},\tilde{v})$ is a solution of \eqref{system:SKdV}, we get
\begin{equation}\label{eqlemmaPS11}
{ \mathcal{J}_\nu(\tilde{u},\tilde{v})=\frac{1}{N}\int_{\R^N} \left(  \tilde{u}^{2^*} + \tilde{v}^{2^*} \right) \, dx + \frac{\nu}{2} \int_{\R^N} h(x) \tilde{u}^2 \tilde{v} \, dx \leq c.}
\end{equation}

Since by assumption $v_n$ does not strongly converge in $\mathcal{D}^{1,2}(\R^N)$, using again \eqref{eqlemmaPS10}, it follows that there exists at least one $k\in\mathfrak{K}\cup \{0,\infty\}$ such that $\overline{\rho}_k>0$
$$
c= \frac{1}{N} \left( \int_{\R^N}  \left(  \tilde{u}^{2^*} + \tilde{v}^{2^*} \right)  \, dx + \sum_{k\in\mathfrak{K}} \overline{\rho}_k+ \overline{\rho}_0 + \overline{\rho}_\infty \right)  + \frac{\nu}{2} \int_{\R^N} h(x) \tilde{u}^2 \tilde{v} \, dx .
$$
By \eqref{eqlemmaPS11}, \eqref{afr2}, \eqref{ineq:con0_a}, \eqref{ineq:con0} and \eqref{PS1}, one gets
\beq\label{eqlemmaPS12}
\begin{split}
\mathcal{J}_\nu(\tilde{u},\tilde{v})&= c - \frac{1}{N} \sum_{k\in\mathfrak{K}} \overline{\rho}_k+ \overline{\rho}_0 + \overline{\rho}_\infty \\
& < \frac{1}{N}\left( \mathcal{S}^{\frac{N}{2}}(\lambda_1)+ \mathcal{S}^{\frac{N}{2}}(\lambda_2) \right) - \frac{1}{N} \mathcal{S}^{\frac{N}{2}}(\lambda_2)\\
& =  \frac{1}{N} \mathcal{S}^{\frac{N}{2}}(\lambda_1). \end{split}
\eeq
Using the first equation of \eqref{system:SKdV} and the definition of $\mathcal{S}^{\frac{N}{2}}(\lambda_1)$, we find
\beq\label{eqlemmaPS13}
\sigma_1 + \nu \int_{\R^N} h(x) \tilde{u}^2 \tilde{v} \, dx = \int_{\R^N} |\nabla \tilde{u}|^2 \, dx - \lambda_1 \int_{\R^N} \frac{\tilde{u}^2}{|x|^2} \, dx \ge \mathcal{S}^{\frac{N}{2}}(\lambda_1) \sigma^{2/2^*}_1,
\eeq
where $\sigma_1= \int_{\R^N} \tilde{u}^{2^*} \, dx$. Using Hölder's inequality, one gets
\beq\label{Holder}
\int_{\mathbb{R}^N} h(x)  \,  \tilde{u}^{2} \,  \tilde{v}  \, dx \leq ||h||_{L^{\infty}(\mathbb{R}^N)} \left( \int_{\mathbb{R}^N} \tilde{u}^{2^*}   \, dx \right)^{\frac{2}{2^*}}\left( \int_{\mathbb{R}^N} \tilde{v}^{2^*}  \, dx   \right)^{\frac{1}{2^*}}.
\eeq

Combining \eqref{Holder} and \eqref{eqlemmaPS11}, we can transform \eqref{eqlemmaPS13} into
\beq\label{eqlemmaPS14}
\sigma_1 + C \nu \sigma_1^\frac{2}{2^*}\ge \mathcal{S}(\lambda_1) \sigma_1^\frac{2}{2^*}.
\eeq

Since $\tilde{v}\not \equiv 0$, there exits $\tilde \varepsilon$ such that  $\int_{\R^N} \tilde{v}^{2^*} \, dx\ge \tilde \varepsilon$. Taking  $\varepsilon>0$ such that $\tilde \varepsilon\ge \varepsilon \mathcal{S}^{\frac{N}{2}}(\lambda_1)$, because of \eqref{eqlemmaPS14} and Lemma~\ref{algelemma}, we find some $\tilde \nu>0$ such that
$$
\sigma_1\ge (1-\varepsilon)\mathcal{S}^{\frac{N}{2}}(\lambda_1) \quad \mbox{ for any } 0<\nu\leq \tilde{\nu}.
$$
The above estimates and \eqref{eqlemmaPS11}, provide us with
$$
\mathcal{J}_\nu(\tilde{u},\tilde{v}) \ge \frac{1}{N} \left((1-\varepsilon) \mathcal{S}^{\frac{N}{2}}(\lambda_1)+ \tilde{\varepsilon} \right)\ge  \frac{1}{N} \mathcal{S}^{\frac{N}{2}}(\lambda_1),
$$
which contradicts \eqref{eqlemmaPS12}. Hence, $v_n\to \tilde{v}$ strongly in $\mathcal{D}^{1,2}(\R^N)$.
\end{proof}

\subsection{Critical dimension $N=6$}

\

In the critical case, more hypothess on the function $h$ are supposed:
\beq\label{hypH}\tag{H}
h\in L^{\infty}(\mathbb{R}^N), \, h \mbox{ continuous around $0$ and $\infty$ and } h(0)=\lim_{x\to+\infty} h(x)=0.
\eeq
We also split the results in the cases in which either $h$ is radial or $h$ is non-radial but $\nu>0$ is sufficiently small.

To obtain the existence of {\it Mountain-Pass} solutions claimed in Theorem~\ref{thm:MPgeom} for the critical regime, we need the following Lemma, analogous to \cite[Lemma 4.1]{AbFePe}.

\begin{lemma}\label{lemcritic}
Assume that $N=6$ and \eqref{hypH} holds. Let $\{(u_n,v_n)\} \subset \mathbb{D}_r$ be a PS sequence for $\mathcal{J}_\nu$ at level $c\in\mathbb{R}$ such that either \eqref{hyplemmaPS2} or \eqref{PS1} and \eqref{PS2} hold, then there exists $\overline{\nu}>0$ such that for every $\nu\leq \overline{\nu}$ then $(u_n,v_n)\to(\tilde{u},\tilde{v}) \in \mathbb{D}_r$ up to subsequence.
\end{lemma}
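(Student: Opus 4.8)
The plan is to mimic the concentration--compactness analysis of Lemmas~\ref{lemmaPS2} and \ref{lemmaPS1}, with the crucial simplification that in the radial space $\mathbb{D}_r$ the only possible concentration of a bounded sequence takes place at the origin and at infinity. First I would invoke Lemma~\ref{lemmaPS0} to obtain that the PS sequence is bounded in $\mathbb{D}_r$, so that up to a subsequence $(u_n,v_n)\rightharpoonup(\tilde u,\tilde v)$ weakly in $\mathbb{D}_r$ and a.e., and I would set up the concentration--compactness measures exactly as in \eqref{con-comp}. Since every $u_n,v_n$ is radial, no concentration can occur at a point $x_j\neq 0$ without destroying the symmetry; hence the families $\{x_j\}$, $\{y_k\}$ are empty and only the atoms at $0$ (with weights $\mu_0,\rho_0,\gamma_0$ and their barred analogues) and the concentration at infinity ($\mu_\infty,\rho_\infty,\dots$) survive. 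Consequently the relevant balances reduce to those encoded in \eqref{ineq:con0_a}--\eqref{ineq:con0} and \eqref{ineq:coninf_a}--\eqref{ineq:coninf}.

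The main difficulty, and the point where $N=6$ genuinely differs from the subcritical case, is that the coupling term $h(x)u_n^2v_n$ is now \emph{critical} (the product $u^2v$ has total degree $2^*=3$), so it could a priori carry mass to the concentration atoms. Here hypothesis \eqref{hypH} is decisive. Testing $\mathcal{J}_\nu'(u_n,v_n)$ with $(u_n\varphi_{0,\varepsilon},0)$, the coupling contribution is controlled, via H\"older with the critical exponents $2^*/2$ and $2^*$, by
\[
\left|\int_{\mathbb{R}^N} h(x)\,u_n^2\, v_n\,\varphi_{0,\varepsilon}\,dx\right|
\le\Big(\sup_{B_\varepsilon(0)}|h|\Big)\,\|u_n\|_{2^*}^2\,\|v_n\|_{2^*}
\le C\,\sup_{B_\varepsilon(0)}|h|,
\]
and since $h$ is continuous near $0$ with $h(0)=0$, this tends to $0$ as $\varepsilon\to0$. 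The same estimate with the cut-off $\varphi_{\infty,\varepsilon}$ and $\lim_{|x|\to\infty}h(x)=0$ annihilates the coupling contribution at infinity. Therefore the local balances at $0$ and at $\infty$ are precisely those of the decoupled critical Hardy--Sobolev problem, which again yield the dichotomies $\rho_0\in\{0\}\cup[\mathcal{S}^{\frac{N}{2}}(\lambda_1),\infty)$, $\rho_\infty\in\{0\}\cup[\mathcal{S}^{\frac{N}{2}}(\lambda_1),\infty)$ and their barred counterparts with $\lambda_2$. This is the heart of the argument: without $h(0)=h(\infty)=0$ the critical coupling could deposit energy at the atoms and spoil the thresholds.

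With the concentration thus reduced to the decoupled picture, I would close the proof along the two regimes of the hypothesis. Under \eqref{hyplemmaPS2} I would repeat the energy identity of Lemma~\ref{lemmaPS2}: by \eqref{eq:limit2}, any nonzero atom forces $c\ge\frac1N\mathcal{S}^{\frac{N}{2}}(\lambda_i)$, contradicting \eqref{hyplemmaPS2}, so all weights vanish and strong $L^{2^*}$, hence $\mathbb{D}_r$, convergence follows. Under \eqref{PS1}--\eqref{PS2} I would reproduce the scheme of Lemma~\ref{lemmaPS1}: the dichotomy that at least one component converges strongly in $L^{2^*}$ (otherwise two distinct critical masses would push $c$ beyond $\frac1N(\mathcal{S}^{\frac{N}{2}}(\lambda_1)+\mathcal{S}^{\frac{N}{2}}(\lambda_2))$), the subsequent upgrade to $\mathcal{D}^{1,2}$-convergence, and finally the case distinction according to which component concentrates, using the profile decompositions of \cite{Stru,Smets} together with the algebraic Lemma~\ref{algelemma} to rule out the remaining concentration. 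The smallness threshold $\overline{\nu}$ enters exactly at this last step, where $\nu\le\overline{\nu}$ is needed to force $\int_{\mathbb{R}^N}\tilde u^{2^*}\,dx\ge(1-\varepsilon)\mathcal{S}^{\frac{N}{2}}(\lambda_1)$ and thereby reach the contradiction.
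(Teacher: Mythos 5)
Your proposal is correct and follows essentially the same route as the paper's proof: radial symmetry reduces possible concentration to $x=0$ and $\infty$, and the decisive step in both arguments is that hypothesis \eqref{hypH} (via H\"older and $h(0)=\lim_{|x|\to\infty}h(x)=0$) annihilates the critical coupling term at those two points, after which the conclusion follows from the arguments of Lemmas~\ref{lemmaPS2} and \ref{lemmaPS1}. Your sup-norm bound $\sup_{B_\varepsilon(0)}|h|\,\|u_n\|_{2^*}^2\|v_n\|_{2^*}$ is only a cosmetic variant of the paper's estimate, which instead distributes $h$ inside both H\"older factors and passes to the limit measures.
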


\begin{proof}
As in Lemma~\ref{lemmaPS2} and Lemma~\ref{lemmaPS1}, to exclude concentration at the $x=0$, it is enough to prove that
\begin{equation}\label{radial:at0}
\lim\limits_{\varepsilon\to0}\limsup\limits_{n\to+\infty}\int_{\mathbb{R}^N}h(x)u_n^2v_n\varphi_{0,\varepsilon}(x)dx=0,
\end{equation}
for $\varphi_{j,\varepsilon}$ a smooth cut--off function centered at the origin defined as in \eqref{cutoff}. To exclude concentration at $\infty$, it is enough to show that
\begin{equation}\label{radial:atinf}
\lim\limits_{R\to+\infty}\limsup\limits_{n\to+\infty}\int_{|x|>R}h(x)u_n^2v_n\varphi_{\infty,\varepsilon}(x)dx=0,
\end{equation}
where $\varphi_{\infty,\varepsilon}$ is a cut--off function supported near $\infty$, see \eqref{cutoffinfi}. To prove \eqref{radial:at0}, observe that, because of H\"older's inequality,
\begin{equation}\label{ineq:holder}
\begin{split}
\int_{\mathbb{R}^N}h(x)u_n^2v_n\varphi_{0,\varepsilon}(x)dx&\leq\left(\int_{\mathbb{R}^N}h(x)|u_n|^{2^*}\varphi_{0,\varepsilon}dx\right)^{\frac{2}{2^*}}\left(\int_{\mathbb{R}^N}h(x)|v_n|^{2^*}\varphi_{0,\varepsilon}dx\right)^{\frac{1}{2^*}}.
\end{split}
\end{equation}
Hence, by \eqref{con-comp} and \eqref{hypH}, it follows that
\begin{equation*}
\lim\limits_{n\to+\infty}\int_{\mathbb{R}^N}h|u_n|^{2^*}\varphi_{0,\varepsilon}dx=\int_{\mathbb{R}^N}h|\tilde{u}|^{2^*}\varphi_{0,\varepsilon}dx+\rho_0h(0)\leq\int_{|x|\leq\varepsilon}h|\tilde{u}|^{2^*}dx,
\end{equation*}
\begin{equation*}
\lim\limits_{n\to+\infty}\int_{\mathbb{R}^N}h|v_n|^{2^*}\varphi_{0,\varepsilon}dx=\int_{\mathbb{R}^N}h|\tilde{v}|^{2^*}\varphi_{0,\varepsilon}dx+\overline{\rho}_0h(0)\leq\int_{|x|\leq\varepsilon}h|\tilde{v}|^{2^*}dx.
\end{equation*}
Thus, we conclude
\begin{equation*}
\lim\limits_{\varepsilon\to0}\limsup\limits_{n\to+\infty}\int_{\mathbb{R}^N}h(x)u_n^2v_n\varphi_{0,\varepsilon}(x)dx\leq\lim\limits_{\varepsilon\to0}\left(\int_{|x|\leq\varepsilon}h|\tilde{u}|^{2^*}dx\right)^{\frac{2}{2^*}}\left(\int_{|x|\leq\varepsilon}h|\tilde{v}|^{2^*}dx\right)^{\frac{1}{2^*}}=0.
\end{equation*}
Since $\lim\limits_{|x|\to+\infty}h(x)=0$, the proof of \eqref{radial:atinf} follows analogously.
\end{proof}

The PS condition for the non-radial case follows assuming that $\nu$ is small enough.
\begin{lemma}\label{lemcritic2}
Suppose $N=6$ and \eqref{hypH} holds. Let $\{(u_n,v_n)\} \subset \mathbb{D}$ be  a PS sequence for $\mathcal{J}_\nu$ at level $c\in\mathbb{R}$ such that
\begin{equation*}
c<\frac{1}{N} \min\{ \mathcal{S}(\lambda_1),\mathcal{S}(\lambda_2) \}^{\frac{N}{2}}.
\end{equation*}
Then, there exists $\overline{\nu}>0$ such that, for every $\nu\leq \overline{\nu}$,  $(u_n,v_n)\to(\tilde{u},\tilde{v}) \in \mathbb{D}$ up to subsequence.
\end{lemma}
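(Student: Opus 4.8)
The plan is to run the concentration--compactness scheme of Lemma~\ref{lemmaPS2}, the essential new feature being that for $N=6$ one has $2^*=3$, so the coupling density $h u^2 v$ is itself critical and does not disappear automatically in the localization argument as it did for $3\le N\le 5$. First I would invoke Lemma~\ref{lemmaPS0} to deduce that $\{(u_n,v_n)\}$ is bounded in $\mathbb{D}$; after passing to a subsequence, $(u_n,v_n)\weakto(\tilde u,\tilde v)$ weakly in $\mathbb{D}$ and a.e.\ in $\R^N$, and the concentration--compactness principle furnishes the measures and numbers of \eqref{con-comp} together with \eqref{ineq:sobcon}, \eqref{ineq:harcon}, \eqref{ineq:con0_a}, \eqref{ineq:con0}, \eqref{ineq:coninf_a} and \eqref{ineq:coninf}. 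Since $N=6$, the coefficient $\tfrac{6-N}{6N}$ in \eqref{eq:limit2} is zero, so the level collapses to $c=\tfrac16\lim_n\|(u_n,v_n)\|_{\mathbb{D}}^2+o(1)$, and by \eqref{ineq:sobcon}--\eqref{ineq:coninf} every atom of the concentration measures contributes a nonnegative amount to $c$.

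Next I would rule out concentration at the origin and at infinity, which works for \emph{every} $\nu>0$ thanks to \eqref{hypH}. Indeed, since $h(0)=\lim_{|x|\to\infty}h(x)=0$, the estimates \eqref{radial:at0} and \eqref{radial:atinf} established in Lemma~\ref{lemcritic} (whose derivation uses only H\"older's inequality and the vanishing of $h$ at $0$ and $\infty$, never radial symmetry) show that the coupling term contributes nothing when $\mathcal{J}'_\nu(u_n,v_n)$ is tested against $(u_n\varphi_{0,\varepsilon},0)$ and $(u_n\varphi_{\infty,\varepsilon},0)$, with $\varphi_{0,\varepsilon},\varphi_{\infty,\varepsilon}$ the cut-offs of \eqref{cutoff} and \eqref{cutoffinfi}. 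Hence the localized identities reduce to the uncoupled ones and the dichotomies \eqref{ineq:con0}, \eqref{ineq:coninf} persist. If some of $\rho_0,\rho_\infty$ were positive it would be $\ge\mathcal{S}^{\frac N2}(\lambda_1)$, forcing $c\ge\frac1N\mathcal{S}^{\frac N2}(\lambda_1)\ge\frac1N\min\{\mathcal{S}(\lambda_1),\mathcal{S}(\lambda_2)\}^{\frac N2}$, contradicting the hypothesis on $c$; symmetrically for $\overline\rho_0,\overline\rho_\infty$. Thus there is no concentration at $0$ or $\infty$.

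The step where the smallness of $\nu$ is genuinely needed is the exclusion of interior atoms $x_j\neq 0$ (and $y_k\neq 0$), where $h$ need not vanish. Testing $\mathcal{J}'_\nu(u_n,v_n)$ with $(u_n\varphi_{j,\varepsilon},0)$ and letting $n\to\infty$ then $\varepsilon\to0$ yields $\mu_j=\rho_j+2\nu C_j$ with $C_j=\lim_{\varepsilon\to0}\lim_n\int_{\R^N}h\,u_n^2 v_n\,\varphi_{j,\varepsilon}\,dx\ge 0$. By H\"older's inequality and \eqref{con-comp} one gets $C_j\le\|h\|_{L^\infty(\R^N)}\,\rho_j^{2/2^*}\,\overline\rho_{(j)}^{\,1/2^*}$, where $\overline\rho_{(j)}$ is the mass charged by $d\overline\rho$ at $x_j$ (equal to $\overline\rho_k$ if $x_j=y_k$, and $0$ otherwise) and $\overline\rho_{(j)}\le\sup_n\|v_n\|_{2^*}^{2^*}\le C(c)$ because the sequence is bounded. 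Combining this with the Sobolev bound $\mathcal{S}\rho_j^{2/2^*}\le\mu_j$ gives, as soon as $\rho_j>0$ and recalling $1-\tfrac{2}{2^*}=\tfrac13$ for $N=6$, the inequality $\mathcal{S}\le\rho_j^{1/3}+2\nu\|h\|_{L^\infty(\R^N)}C(c)^{1/2^*}$, whence $\rho_j^{2/2^*}\ge(\mathcal{S}-\delta(\nu))^2$ with $\delta(\nu)=2\nu\|h\|_{L^\infty(\R^N)}C(c)^{1/2^*}\to0$ as $\nu\to0$. Inserting this into $c\ge\frac16\mu_j\ge\frac16\mathcal{S}\rho_j^{2/2^*}$ produces $c\ge\frac16\mathcal{S}(\mathcal{S}-\delta(\nu))^2$. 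Since $\mathcal{S}(\lambda)<\mathcal{S}$ for $\lambda\in(0,\Lambda_N)$ by \eqref{Slambda}, the gap $\mathcal{S}^{\frac N2}-\min\{\mathcal{S}(\lambda_1),\mathcal{S}(\lambda_2)\}^{\frac N2}$ is a fixed positive number, so I can fix $\overline\nu>0$ (depending only on $c$, $\|h\|_{L^\infty}$, $\lambda_1$ and $\lambda_2$, not on $j$) making $\frac16\mathcal{S}(\mathcal{S}-\delta(\nu))^2>\frac1N\min\{\mathcal{S}(\lambda_1),\mathcal{S}(\lambda_2)\}^{\frac N2}$ for all $\nu\le\overline\nu$, contradicting the bound on $c$. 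Therefore $\rho_j=0$ for all $j\in\mathfrak{J}$, and the identical computation gives $\overline\rho_k=0$ for all $k\in\mathfrak{K}$.

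Once all atoms vanish, $\int_{\R^N}|u_n|^{2^*}\to\int_{\R^N}|\tilde u|^{2^*}$ and $\int_{\R^N}|v_n|^{2^*}\to\int_{\R^N}|\tilde v|^{2^*}$, so $u_n\to\tilde u$ and $v_n\to\tilde v$ strongly in $L^{2^*}(\R^N)$; the critical coupling term then also converges by H\"older. Finally, exactly as at the end of Lemma~\ref{lemmaPS2}, testing $\mathcal{J}'_\nu(u_n,v_n)\to0$ against $(u_n-\tilde u,v_n-\tilde v)$ gives $\|(u_n-\tilde u,v_n-\tilde v)\|_{\mathbb{D}}^2\to0$, i.e.\ strong convergence in $\mathbb{D}$. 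I expect the main obstacle to be precisely the interior-atom estimate: in dimension six the critical coupling cannot be made to disappear, and one must absorb it, through the smallness of $\nu$, into the fixed Sobolev gap above, taking care that $\overline\nu$ depends only on the fixed data and not on the concentration point.
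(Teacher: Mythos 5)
Your proposal follows the paper's strategy in all essentials: boundedness via Lemma~\ref{lemmaPS0}, the concentration--compactness setup \eqref{con-comp}, exclusion of concentration at $0$ and $\infty$ for \emph{every} $\nu>0$ using only \eqref{hypH} (exactly what the paper does by invoking the arguments of Lemma~\ref{lemcritic}, which indeed never use radial symmetry), and exclusion of interior atoms by taking $\nu$ small. Where you diverge is the mechanics of the interior-atom step. The paper tests $\mathcal{J}_\nu'$ against \emph{both} $(u_n\varphi_{j,\varepsilon},0)$ and $(0,v_n\varphi_{j,\varepsilon})$ at a joint atom $j\in\mathfrak{J}\cap\mathfrak{K}$, adds the two localized identities, and uses $\rho_j^{2/2^*}\overline\rho_j^{1/2^*}\le\rho_j+\overline\rho_j$ plus superadditivity of $s\mapsto s^{2/2^*}$ to obtain the symmetric dichotomy: $\rho_j+\overline\rho_j=0$ or $\rho_j+\overline\rho_j\ge\bigl(\mathcal{S}/(1+2^*\nu\tilde C)\bigr)^{N/2}$. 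You instead test only the $u$-equation and absorb the foreign mass $\overline\rho_{(j)}$ into a constant $C(c)$ coming from the uniform bound on the PS sequence; this is legitimate, and your observation that $C(c)$ is independent of $\nu$ and of $j$ is correct, since for $N=6$ the combination $\mathcal{J}_\nu-\frac13\langle\mathcal{J}_\nu'|\cdot\rangle$ annihilates both the critical and the coupling terms. Either route kills every interior atom $\rho_j$ of the first component.

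There is, however, one inaccuracy to repair: your closing claim that ``the identical computation gives $\overline\rho_k=0$'' is not literally true, because the coupling $hu^2v$ is asymmetric. Testing the second equation with $(0,v_n\varphi_{k,\varepsilon})$ produces a coupling contribution bounded by $\nu\|h\|_{L^\infty}\rho_{(k)}^{2/2^*}\overline\rho_k^{1/2^*}$, so after dividing by $\overline\rho_k^{2/2^*}$ you are left with $\mathcal{S}\le\overline\rho_k^{1/3}+\nu\|h\|_{L^\infty}C(c)^{2/3}\overline\rho_k^{-1/3}$, whose right-hand side blows up as $\overline\rho_k\to0$; small atoms of $\overline\rho$ produce no contradiction this way. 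The fix is immediate with what you have already proved: since $\rho_j=0$ for every $j\in\mathfrak{J}$, the measure $d\rho$ charges no interior point, hence $\rho_{(k)}=0$ and the coupling contribution at $y_k$ vanishes in the limit; then $\mathcal{S}\overline\rho_k^{2/2^*}\le\overline\mu_k\le\overline\rho_k$ gives the clean dichotomy $\overline\rho_k=0$ or $\overline\rho_k\ge\mathcal{S}^{N/2}$, and the latter forces $c\ge\frac1N\mathcal{S}^{N/2}>\frac1N\min\{\mathcal{S}(\lambda_1),\mathcal{S}(\lambda_2)\}^{N/2}$, a contradiction. This asymmetry is precisely what the paper's device of summing the two localized identities is designed to sidestep; with that one-line correction your argument is complete and the rest (strong $L^{2^*}$ convergence of both components and the final test against $(u_n-\tilde u,v_n-\tilde v)$) matches the paper.
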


\begin{proof}
Concentration at the points $0$ and $\infty$ can be excluded by similar arguments to those of Lemma~\ref{lemcritic}, so we only have to consider concentration at $x_j\neq0, \infty$. Furthermore, we can also assume that $j\in\mathfrak{J}\cap\mathfrak{K}$. Otherwise, for $\varphi_{j,\varepsilon}(x)$ a cut-off function centered at $x_j\in\mathbb{R}^N$ defined as in \eqref{cutoff} we have
\begin{equation*}
\lim\limits_{\varepsilon\to0}\limsup\limits_{n\to+\infty}\int_{\mathbb{R}^N}h(x)u_n^2v_n\varphi_{j,\varepsilon}(x)dx=0,
\end{equation*}
and, then, there is no concentration at $x_j\in\mathbb{R}^N$ with $j\in\mathfrak{J}$ and $j\notin\mathfrak{K}$ or $x_k\in\mathbb{R}^N$ with $k\notin\mathfrak{J}$ and $k\in\mathfrak{K}$. Therefore, assuming $j\in\mathfrak{J}\cap\mathfrak{K}$ and testing $\mathcal{J}_{\nu}'(u_n,v_n)$ with $(u_n\varphi_{j,\varepsilon},0)$ we get
\begin{equation}\label{limitu}
\begin{split}
0&=\lim\limits_{n\to+\infty}\left\langle \mathcal{J}_{\nu}'(u_n,v_n)\big|(u_n\varphi_{j,\varepsilon},0)\right\rangle\\
&=\lim\limits_{n\to+\infty}\left(\int_{\mathbb{R}^N}|\nabla u_n|^2\varphi_{j,\varepsilon}dx+\int_{\mathbb{R}^N}u_n\nabla u_n\nabla\varphi_{j,\varepsilon}dx-\lambda_1\int_{\mathbb{R}^N}\frac{u_n^2}{|x|^2}\varphi_{j,\varepsilon}dx\right.\\
&\mkern+80mu-\left.\int_{\mathbb{R}^N}|u_n|^{2^*}\varphi_{j,\varepsilon}dx-2\nu\int_{\mathbb{R}^N}h(x)u_n^2v_n{\varphi_{j,\varepsilon}}dx\right),
\end{split}
\end{equation}
and testing $\mathcal{J}_{\nu}'(u_n,v_n)$ with $(0,v_n\varphi_{j,\varepsilon})$ we get
\begin{equation}\label{limitv}
\begin{split}
0&=\lim\limits_{n\to+\infty}\left\langle \mathcal{J}_{\nu}'(u_n,v_n)\big|(0,v_n\varphi_{j,\varepsilon})\right\rangle\\
&=\lim\limits_{n\to+\infty}\left(\int_{\mathbb{R}^N}|\nabla v_n|^2\varphi_{j,\varepsilon}dx+\int_{\mathbb{R}^N}v_n\nabla v_n\nabla\varphi_{j,\varepsilon}dx-\lambda_2\int_{\mathbb{R}^N}\frac{v_n^2}{|x|^2}\varphi_{j,\varepsilon}dx\right.\\
&\mkern+80mu-\left.\int_{\mathbb{R}^N}|v_n|^{2^*}\varphi_{j,\varepsilon}dx-\nu\int_{\mathbb{R}^N}h(x)u_n^2v_n{\varphi_{j,\varepsilon}}dx\right).
\end{split}
\end{equation}
Hence, as $h\in L^{\infty}(\mathbb{R}^N)$, by \eqref{ineq:holder}, we get
\begin{equation}\label{conuv}
\lim\limits_{\varepsilon\to0}\limsup\limits_{n\to+\infty}\int_{\mathbb{R}^N}h(x)u_n^2v_n\varphi_{j,\varepsilon}(x)dx\leq \tilde{C}\rho_j^{\frac{2}{2^*}}\overline{\rho}_j^{\frac{1}{2^*}}.
\end{equation}
Therefore, letting $\varepsilon\to0$, from \eqref{limitu}, \eqref{limitv} and \eqref{conuv} it follows that
\begin{equation*}
\mu_j-\rho_j-2\nu\tilde{C}\rho_j^{\frac{2}{2^*}}\overline{\rho}_j^{\frac{1}{2^*}}\leq0\qquad\text{and}\qquad
\overline{\mu}_j-\overline{\rho}_j-\nu\tilde{C}\rho_j^{\frac{2}{2^*}}\overline{\rho}_j^{\frac{1}{2^*}}\leq0.
\end{equation*}
Thus, because of \eqref{ineq:sobcon}, we get
\begin{equation*}
\mathcal{S}\left(\rho_j^{\frac{2}{2^*}}+\overline{\rho}_j^{\frac{1}{2^*}}\right)\leq\rho_j+\overline{\rho}_j+2^*\nu \tilde{C}\rho_j^{\frac{2}{2^*}}\overline{\rho}_j^{\frac{1}{2^*}},
\end{equation*}
so $\mathcal{S}\left(\rho_j+\overline{\rho}_j\right)^{\frac{2}{2^*}}\leq(\rho_j+\overline{\rho}_j)(1+2^*\nu \tilde{C})$. Then, either $\rho_j+\overline{\rho}_j=0$ or $\displaystyle\rho_j+\overline{\rho}_j\ge\left(\frac{\mathcal{S}}{1+2^*\nu \tilde{C}}\right)^{\frac{N}{2}}$. As in Lemma~\ref{lemmaPS2}, in case of having concentration, we get
\begin{equation*}
c\ge\frac16(\mu_j+\overline{\mu}_j)\ge S\frac16(\rho_j+\overline{\rho}_j)^{\frac{2}{2^*}}
\ge\frac{1}{N}\left(\frac{\mathcal{S}}{1+2^*\nu \tilde{C}}\right)^{\frac{N}{2}}.
\end{equation*}
Hence, for $\nu>0$ sufficiently small, we find
\begin{equation*}
c\ge\frac{1}{N}\left(\frac{\mathcal{S}}{1+2^*\nu \tilde{C}}\right)^{\frac{N}{2}}\ge\frac{1}{N}\min\{\mathcal{S}(\lambda_1),\mathcal{S}(\lambda_2)\}^{\frac{N}{2}},
\end{equation*}
in contradiction with the hypothesis on the energy level $c$.
\end{proof}

\section{Main Results}\label{section4}
We prove now the main theorems regarding the solvability of the system \eqref{system:SKdV}. In this section, we shall assume one of the following
\begin{equation}\tag{C}\label{alternative}
 \mbox{ Either } 3\le N \le 5 \qquad  \mbox{ or } \qquad N=6  \mbox{ and } h  \mbox{ is radial and satisfies \eqref{hypH}, }
\end{equation}
\begin{equation}\tag{D}\label{alternative2}
{ N=6, \: \nu \mbox{ satisfies Lemma~\ref{lemcritic2}} \quad \mbox{and} \quad \eqref{hypH} \: \mbox{holds}.}
\end{equation}

The first result addresses the case $\nu>\overline{\nu}$.  By Proposition~\ref{thm:semitrivial},  {the semi-trivial solution $(0,z_\mu^{\lambda_2})$ is a saddle point of $\mathcal{J}_\nu$ constrained to $\mathcal{N}_\nu$. See Figure~1 for a scheme of this situation.}

\begin{theorem}\label{thm:nugrande}
Assume that $\nu>\overline{\nu}$ defined by \eqref{overnu}. If \eqref{alternative} holds,
 then system \eqref{system:SKdV} admits a positive ground state solution $(\tilde{u},\tilde{v}) \in \mathbb{D}$.  
\end{theorem}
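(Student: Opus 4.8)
The plan is to realize the ground state as a minimizer of $\mathcal{J}_\nu$ on the Nehari manifold $\mathcal{N}_\nu$, and the crux is to locate the minimizing level $\tilde{c}_\nu=\inf_{\mathcal{N}_\nu}\mathcal{J}_\nu$ strictly below the compactness threshold $\frac{1}{N}\min\{\mathcal{S}^{\frac{N}{2}}(\lambda_1),\mathcal{S}^{\frac{N}{2}}(\lambda_2)\}$ from Lemma~\ref{lemmaPS2} (and from Lemma~\ref{lemcritic} in the radial critical case). Recall from \eqref{Nnueq} and \eqref{criticalpoint2} that $\mathcal{J}_\nu$ is bounded below on $\mathcal{N}_\nu$ by $\frac{1}{6}\rho^2>0$, and that along every ray $t\mapsto\mathcal{J}_\nu(tu,tv)$ the functional attains a unique positive maximum exactly at the intersection with $\mathcal{N}_\nu$; hence $\tilde{c}_\nu=\inf_{(u,v)\neq(0,0)}\max_{t>0}\mathcal{J}_\nu(tu,tv)$. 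Since $(z_\mu^{\lambda_1},0)$ and $(0,z_\mu^{\lambda_2})$ both lie on $\mathcal{N}_\nu$ (Remark~\ref{zeta1Nehari}) with energies $\frac{1}{N}\mathcal{S}^{\frac{N}{2}}(\lambda_1)$ and $\frac{1}{N}\mathcal{S}^{\frac{N}{2}}(\lambda_2)$ by \eqref{Jzeta}, one has $\tilde{c}_\nu\le\frac{1}{N}\min\{\mathcal{S}^{\frac{N}{2}}(\lambda_1),\mathcal{S}^{\frac{N}{2}}(\lambda_2)\}$; the whole point is to upgrade both inequalities to strict ones.

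For the level $\frac{1}{N}\mathcal{S}^{\frac{N}{2}}(\lambda_1)$, I would test with $(z_\mu^{\lambda_1},s\psi)$ for a fixed $\psi\ge0$, $\psi\not\equiv0$, and small $s>0$, studying $F(s)=\max_{t>0}\mathcal{J}_\nu(tz_\mu^{\lambda_1},ts\psi)$. As the maximum at $s=0$ is attained at $t=1$ (because $(z_\mu^{\lambda_1},0)\in\mathcal{N}_\nu$), a direct computation, using $\partial_t\mathcal{J}_\nu=0$ at the maximizer, gives $F'(0)=-\nu\int_{\mathbb{R}^N}h(x)(z_\mu^{\lambda_1})^2\psi\,dx<0$ since $h>0$. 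Thus $F(s)<\frac{1}{N}\mathcal{S}^{\frac{N}{2}}(\lambda_1)$ for small $s>0$, whence $\tilde{c}_\nu<\frac{1}{N}\mathcal{S}^{\frac{N}{2}}(\lambda_1)$; this holds for every $\nu>0$ and merely reflects that the coupling term lowers the energy. The hypothesis $\nu>\overline{\nu}$ enters only to break the second level: by Proposition~\ref{thm:semitrivial}(ii) the pair $(0,z_\mu^{\lambda_2})$ is a saddle point, so there is $\varphi_1$ with $(\varphi_1,0)\in T_{(0,z_\mu^{\lambda_2})}\mathcal{N}_\nu$ and $\mathcal{J}_\nu''(0,z_\mu^{\lambda_2})[(\varphi_1,0)]^2<0$. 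Because $(0,z_\mu^{\lambda_2})$ is a free critical point of $\mathcal{J}_\nu$, the constrained Hessian on $T_{(0,z_\mu^{\lambda_2})}\mathcal{N}_\nu$ agrees with this second variation, so moving along any curve of $\mathcal{N}_\nu$ tangent to $(\varphi_1,0)$ decreases the energy to second order, producing points of $\mathcal{N}_\nu$ with energy $<\frac{1}{N}\mathcal{S}^{\frac{N}{2}}(\lambda_2)$, i.e. $\tilde{c}_\nu<\frac{1}{N}\mathcal{S}^{\frac{N}{2}}(\lambda_2)$. Combining the two bounds yields $\tilde{c}_\nu<\frac{1}{N}\min\{\mathcal{S}^{\frac{N}{2}}(\lambda_1),\mathcal{S}^{\frac{N}{2}}(\lambda_2)\}$.

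With this strict gap I would take a minimizing sequence on $\mathcal{N}_\nu$ and, via Ekeland's variational principle, turn it into a PS sequence for $\mathcal{J}_\nu$ constrained to $\mathcal{N}_\nu$ at level $\tilde{c}_\nu$; by Lemma~\ref{critical Constrained} it is then a PS sequence for $\mathcal{J}_\nu$ on $\mathbb{D}$. Under \eqref{alternative}, the PS condition holds at $\tilde{c}_\nu$ by Lemma~\ref{lemmaPS2} when $3\le N\le5$, and by Lemma~\ref{lemcritic} when $N=6$ with $h$ radial (working in $\mathbb{D}_r$ and invoking the principle of symmetric criticality). Hence the sequence converges strongly to some $(\tilde{u},\tilde{v})\in\mathcal{N}_\nu$ attaining $\tilde{c}_\nu$, which by \eqref{eq:critt} is a genuine critical point of $\mathcal{J}_\nu$.

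It remains to verify the qualitative properties. Non-triviality of both components follows from the strict energy gap: if $\tilde{u}\equiv0$ then $\tilde{v}$ solves \eqref{entire}, so $(\tilde{u},\tilde{v})=(0,z_\mu^{\lambda_2})$ up to scaling, with energy $\frac{1}{N}\mathcal{S}^{\frac{N}{2}}(\lambda_2)>\tilde{c}_\nu$, a contradiction; whereas $\tilde{v}\equiv0$ forces $\tilde{u}\equiv0$ through the second equation of \eqref{system:SKdV}, contradicting $(\tilde{u},\tilde{v})\in\mathcal{N}_\nu$. For positivity I would run the whole argument with the truncated functional $\mathcal{J}^+_\nu$ of \eqref{funct:SKdVp} on $\mathcal{N}^+_\nu$ (or replace the minimizer by $(|\tilde{u}|,|\tilde{v}|)$, which does not increase the energy and can be reprojected onto $\mathcal{N}_\nu$): testing against the negative parts gives $\tilde{u},\tilde{v}\ge0$, and the strong maximum principle upgrades this to $\tilde{u},\tilde{v}>0$ in $\mathbb{R}^N\setminus\{0\}$. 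Finally, since by \eqref{eq:critt} every non-trivial critical point lies on $\mathcal{N}_\nu$, the value $\tilde{c}_\nu=\mathcal{J}_\nu(\tilde{u},\tilde{v})$ is minimal among all non-trivial non-negative bound states, so $(\tilde{u},\tilde{v})$ is the sought positive ground state. The main obstacle is the second strict inequality $\tilde{c}_\nu<\frac{1}{N}\mathcal{S}^{\frac{N}{2}}(\lambda_2)$: it is precisely here that $\nu>\overline{\nu}$ and the saddle geometry of the semi-trivial solution are indispensable, and one must pass correctly from the negative second variation to a genuine energy-lowering deformation inside $\mathcal{N}_\nu$ (equivalently, control the projection onto the Nehari manifold).
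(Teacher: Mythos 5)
Your proposal is correct and follows essentially the same route as the paper: minimize $\mathcal{J}_\nu$ on $\mathcal{N}_\nu$, show $\tilde{c}_\nu<\frac{1}{N}\min\{\mathcal{S}^{\frac{N}{2}}(\lambda_1),\mathcal{S}^{\frac{N}{2}}(\lambda_2)\}$ by exploiting that $(0,z_\mu^{\lambda_2})$ is a saddle point for $\nu>\overline{\nu}$ (Proposition~\ref{thm:semitrivial}) and that $(z_\mu^{\lambda_1},0)\in\mathcal{N}_\nu$ is not a critical point, then recover compactness from Lemma~\ref{lemmaPS2} (resp.\ Lemma~\ref{lemcritic} for $N=6$ radial) and conclude non-triviality and positivity exactly as the paper does. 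The only difference is one of detail, not of strategy: where the paper simply asserts the strict upper bounds on $\tilde{c}_\nu$, you verify them explicitly via the fibering-map derivative $F'(0)<0$ and the second-order deformation along the negative Hessian direction, which are constructive justifications of the same two facts.
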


\begin{proof}
By Proposition~\ref{thm:semitrivial}, the couple $(0,z_\mu^{\lambda_2})$ is a  saddle point of $\mathcal{J}_\nu$ constrained on $\mathcal{N}_\nu$. Recall that $(z_\mu^{\lambda_1},0)$ is not a critical point of $\mathcal{J}_\nu$ on $\mathcal{N}_\nu$. Consequently,
\beq\label{minimumlevel}
\tilde{c}_\nu < \min\{\mathcal{J}_\nu(z_\mu^{\lambda_1},0),\mathcal{J}_\nu(0,z_\mu^{\lambda_2}) \}=\frac{1}{N}\min\{\mathcal{S}(\lambda_1),\mathcal{S}(\lambda_2)\}^{\frac{N}{2}},
\eeq
where $\tilde{c}_\nu $ is defined in \eqref{ctilde}. For a subcritical dimension, $3\le N\le 5$, Lemma~\ref{lemmaPS2} guarantees the existence of $(\tilde{u},\tilde{v}) \in \mathbb{D}$ such that $\mathcal{J}_\nu(\tilde{u},\tilde{v})=\tilde{c}_\nu$. In addition, due to
\beq\label{minimumlevel2}
\mathcal{J}_\nu(|\tilde{u}|,|\tilde{v}|)\leq \mathcal{J}_\nu(\tilde{u},\tilde{v}),
\eeq
\begin{figure}[b]
\begin{tikzpicture}[scale=1.15,line cap=round,line join=round,>=triangle 45,x=1cm,y=1cm]
\draw [line width=0.75pt] (0,-0.2)-- (0,6);
\draw [line width=0.75pt] (-0.2,0)-- (6.5,0);
\draw [fill=black,shift={(0,6)}] (0,0) ++(0 pt,3.75pt) -- ++(3.2475952641916446pt,-5.625pt)--++(-6.495190528383289pt,0 pt) -- ++(3.2475952641916446pt,5.625pt);
\draw [fill=black,shift={(6.5,0)},rotate=270] (0,0) ++(0 pt,3.75pt) -- ++(3.2475952641916446pt,-5.625pt)--++(-6.495190528383289pt,0 pt) -- ++(3.2475952641916446pt,5.625pt);
\draw[line width=0.75pt,smooth,samples=100,domain=0.8:1.25] plot(\x,{0.6568838293036278*(\x)^(6)-9.274898053426458*(\x)^(5)+51.66536946561714*(\x)^(4)-144.44701525508654*(\x)^(3)+213.94517328424803*(\x)^(2)-160.37490356950588*(\x)+49.213535512197545});
\draw[line width=0.75pt,smooth,samples=100,domain=1.7:2.6] plot(\x,{0.6568838293036278*(\x)^(6)-9.274898053426458*(\x)^(5)+51.66536946561714*(\x)^(4)-144.44701525508654*(\x)^(3)+213.94517328424803*(\x)^(2)-160.37490356950588*(\x)+49.213535512197545});
\draw[line width=0.75pt,smooth,samples=100,domain=3:4] plot(\x,{0.6568838293036278*(\x)^(6)-9.274898053426458*(\x)^(5)+51.66536946561714*(\x)^(4)-144.44701525508654*(\x)^(3)+213.94517328424803*(\x)^(2)-160.37490356950588*(\x)+49.213535512197545});
\draw [line width=0.75pt,dotted] (0,4.8)-- (6,4.8);
\draw [fill=black] (3.62,4.79) circle (1pt);
\draw[color=black] (3.9,4.5) node {$\left(0,z_{\mu}^{\lambda_2}\right)$};
\draw[color=black] (5.5,4.5) node {$\rightarrow$ Saddle point};
\draw[color=black] (-0.75,4.9) node {$\frac{1}{N}\mathcal{S}^{\frac{N}{2}}(\lambda_2)$};
\draw [line width=0.75pt,dotted] (0,1.4)-- (6,1.4);
\draw [fill=black] (0.99,1.4) circle (1pt);
\draw[color=black] (1.45,1.7) node {$\left(z_{\mu}^{\lambda_1},0\right)$};
\draw[color=black] (-0.75,1.51) node {$\frac{1}{N}\mathcal{S}^{\frac{N}{2}}(\lambda_1)$};
\draw [line width=0.75pt,dotted] (0,0.54)-- (6,0.54);
\draw [fill=black] (2.08,0.54) circle (1pt);
\draw[color=black] (2.1,0.3) node {$(\tilde{u},\tilde{v})$};
\draw[color=black] (4.2,0.3) node {$\rightarrow$ Positive Ground State};
\draw[color=black] (-0.2,0.55) node {$\tilde{c}_{\nu}$};
\draw[color=black] (6,-0.3) node {$\|(\cdot \, ,\cdot)\|_{\mathbb{D}}$};
\draw[color=black] (-0.5,6) node {$\mathcal{J}_{\nu}{\big|}_{\mathcal{N}_{\nu}}$};

\end{tikzpicture}
\caption{The energy configuration under hypotheses of Theorem~\ref{thm:nugrande}}
\end{figure}
we can assume that $\tilde{u}\ge 0$ and $\tilde{v}\ge 0$ in $\mathbb{R}^N$. By classical regularity results, $\tilde{u}$ and $\tilde{v}$ are smooth in $\R^N\setminus\{0\}$. Moreover, $\tilde{u}\not \equiv 0$ and $\tilde{v}\not \equiv 0$. Otherwise, if $\tilde u\equiv 0$, one obtains that $\tilde{v}$ satisfies \eqref{vtildeq}. Actually, $\tilde{v}=z_\mu^{\lambda_2}$, which violates \eqref{minimumlevel}. The case $\tilde v\not\equiv 0$, can not take place since, on the contrary, both $\tilde{u},\tilde{v}\equiv 0$ and $(0,0)\not\in\mathcal{N}_\nu$. Finally, using the maximum principle in $\R^N\setminus\{0\}$, one derives that $(\tilde{u},\tilde{v}) \in \mathcal{N}_\nu$ is a ground state such that $\tilde{u}> 0$ and $\tilde{v}> 0$ in $\mathbb{R}^N\setminus\{0\}$. The same conclusion holds for the critical dimension $N=6$, by applying Lemma~\ref{lemcritic} instead. Consequently, also we infer that $(\tilde{u},\tilde{v})$ is a positive ground state.
\end{proof}

We point out that the order between the energy levels of the semi-trivial solution and $(z_\mu^{\lambda_1},0)$ is determined by the order of the parameters $\lambda_1$ and $\lambda_2$, since \eqref{Jzeta} and \eqref{Slambda} illustrate. Indeed, if $\lambda_1\ge \lambda_2$, the minimum level between both corresponds to $(z_\mu^{\lambda_1},0)$, which is not a critical point of $\mathcal{J}_\nu$ on $\mathcal{N}_\nu$. As an immediate consequence, the existence of a positive ground state is derived. See Figure~2 for the corresponding energy configuration. Note that, in this figure, $(0,z_\mu^{\lambda_2})$ is assumed to be a local minimum, but it may be a saddle point.
\begin{theorem}\label{thm:lambdaground} Suppose $\lambda_1\ge \lambda_2$. If either \eqref{alternative} or \eqref{alternative2} holds, then system \eqref{system:SKdV} admits a positive ground state $(\tilde{u},\tilde{v})\in\mathbb{D}$.
\end{theorem}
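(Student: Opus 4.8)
The plan is to follow the proof of Theorem~\ref{thm:nugrande} almost verbatim; the only genuinely new point is the reason for the strict inequality at the critical energy level. First I would record that, since $\lambda\mapsto\mathcal{S}(\lambda)$ is strictly decreasing by \eqref{Slambda}, the hypothesis $\lambda_1\ge\lambda_2$ forces $\mathcal{S}(\lambda_1)\le\mathcal{S}(\lambda_2)$, so that
\begin{equation*}
\frac{1}{N}\min\{\mathcal{S}(\lambda_1),\mathcal{S}(\lambda_2)\}^{\frac{N}{2}}=\frac{1}{N}\mathcal{S}^{\frac{N}{2}}(\lambda_1)=\mathcal{J}_\nu(z_\mu^{\lambda_1},0).
\end{equation*}
Thus the lower of the two reference levels is now realized by $(z_\mu^{\lambda_1},0)$, which by Remark~\ref{zeta1Nehari} lies on $\mathcal{N}_\nu$ but, failing the second equation of \eqref{system:SKdV} (as $h>0$, $\nu>0$ and $z_\mu^{\lambda_1}>0$ give $\nu h(x)(z_\mu^{\lambda_1})^2\not\equiv0$), is \emph{not} a critical point of $\mathcal{J}_\nu$, hence by \eqref{eq:critt} not a critical point of $\mathcal{J}_\nu|_{\mathcal{N}_\nu}$.

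The core step is then to prove
\begin{equation}\label{eq:strictground}
\tilde{c}_\nu<\frac{1}{N}\mathcal{S}^{\frac{N}{2}}(\lambda_1)=\frac{1}{N}\min\{\mathcal{S}(\lambda_1),\mathcal{S}(\lambda_2)\}^{\frac{N}{2}}.
\end{equation}
Since $(z_\mu^{\lambda_1},0)$ is a non-critical point of the $\mathcal{C}^2$ functional $\mathcal{J}_\nu$ constrained on the smooth manifold $\mathcal{N}_\nu$, its constrained gradient does not vanish, so moving along a curve in $\mathcal{N}_\nu$ tangent to the negative gradient strictly lowers the energy below $\mathcal{J}_\nu(z_\mu^{\lambda_1},0)$, whence \eqref{eq:strictground}. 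Concretely, I would fix $\phi\ge0$, $\phi\not\equiv0$, project the path $t\mapsto(z_\mu^{\lambda_1},t\phi)$ onto $\mathcal{N}_\nu$ through the unique scaling of \eqref{normH}, and verify that the positive coupling contribution $3\nu\int_{\mathbb{R}^N}h(x)(z_\mu^{\lambda_1})^2(t\phi)\,dx>0$ makes the energy strictly decreasing for small $t>0$. This is the main obstacle of the argument: although lighter than in the saddle-point situation of Theorem~\ref{thm:nugrande} because the lowest reference level is itself attained at a non-critical point, one must still check that the renormalization onto the Nehari manifold does not cancel the gain coming from the coupling term.

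With \eqref{eq:strictground} at hand, the argument closes exactly as before. A minimizing sequence for $\tilde{c}_\nu$ on $\mathcal{N}_\nu$ is a PS sequence for $\mathcal{J}_\nu|_{\mathcal{N}_\nu}$ at the level $\tilde{c}_\nu$ and, by Lemma~\ref{critical Constrained}, a PS sequence for $\mathcal{J}_\nu$. Under \eqref{alternative} the required compactness is provided by Lemma~\ref{lemmaPS2} if $3\le N\le5$ and by Lemma~\ref{lemcritic} if $N=6$ with $h$ radial, while under \eqref{alternative2} it is provided by Lemma~\ref{lemcritic2} for $\nu$ small; in every case the admissible threshold of those lemmas coincides with the right-hand side of \eqref{eq:strictground}, so the sequence converges and $\tilde{c}_\nu$ is attained at some $(\tilde{u},\tilde{v})\in\mathbb{D}$. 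Replacing $(\tilde{u},\tilde{v})$ by $(|\tilde{u}|,|\tilde{v}|)$, which does not increase $\mathcal{J}_\nu$, I may assume $\tilde{u},\tilde{v}\ge0$, and classical regularity makes them smooth on $\mathbb{R}^N\setminus\{0\}$.

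Finally I would exclude the trivial and semi-trivial cases. If $\tilde{u}\equiv0$, then $\tilde{v}$ solves \eqref{vtildeq}, so $\tilde{v}=z_\mu^{\lambda_2}$ and $\mathcal{J}_\nu(0,\tilde{v})=\frac{1}{N}\mathcal{S}^{\frac{N}{2}}(\lambda_2)\ge\frac{1}{N}\mathcal{S}^{\frac{N}{2}}(\lambda_1)>\tilde{c}_\nu$, contradicting \eqref{eq:strictground}; if $\tilde{v}\equiv0$, the second equation of \eqref{system:SKdV} forces $\tilde{u}\equiv0$, impossible since $(0,0)\notin\mathcal{N}_\nu$. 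Hence $\tilde{u},\tilde{v}\not\equiv0$, and the strong maximum principle on $\mathbb{R}^N\setminus\{0\}$ upgrades them to $\tilde{u},\tilde{v}>0$, so that $(\tilde{u},\tilde{v})$ is a positive ground state.
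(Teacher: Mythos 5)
Your proposal is correct and follows essentially the same route as the paper: the strict bound $\tilde{c}_\nu<\frac{1}{N}\mathcal{S}^{\frac{N}{2}}(\lambda_1)=\frac{1}{N}\min\{\mathcal{S}(\lambda_1),\mathcal{S}(\lambda_2)\}^{\frac{N}{2}}$ obtained from the fact that $(z_\mu^{\lambda_1},0)\in\mathcal{N}_\nu$ is not a critical point, followed by Lemma~\ref{lemmaPS2} (resp.\ Lemmas~\ref{lemcritic}, \ref{lemcritic2} when $N=6$), the replacement $(\tilde u,\tilde v)\mapsto(|\tilde u|,|\tilde v|)$, exclusion of trivial and semi-trivial limits, and the maximum principle. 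Your only additions are welcome elaborations of steps the paper leaves implicit, namely the gradient-flow/projected-path justification of the strict inequality and the explicit contradiction ruling out $\tilde u\equiv 0$.
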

\begin{proof}
Since $\lambda_1\ge \lambda_2$ and { $(z_\mu^{\lambda_1},0)$ is not a critical point of $\mathcal{J}_\nu$ constrained on $\mathcal{N}_\nu$},
$$
\tilde{c}_\nu<\mathcal{J}_\nu(z_\mu^{\lambda_1},0)=\frac{1}{N}\mathcal{S}^{\frac{N}{2}}(\lambda_1)=\frac{1}{N}\min \{\mathcal{S}(\lambda_1),\mathcal{S}(\lambda_2)\}^{\frac{N}{2}},
$$
with $\tilde{c}_\nu $ was introduced in \eqref{ctilde}. Therefore, for subcritical dimension, $3\le N\le 5$, Lemma~\ref{lemmaPS2} implies that there exists $(\tilde{u},\tilde{v})\in\mathcal{N}_\nu$ with $\tilde{c}_\nu=\mathcal{J}_\nu(\tilde{u},\tilde{v})$. Using \eqref{minimumlevel2}, one can suppose that $u,v\ge 0$ in $\mathbb{R}^N$. Moreover, arguing by contradiction, it is deduced easily that $(\tilde{u},\tilde{v}) \not \equiv (0,0)$. Applying the maximum principle in $\R^N\setminus\{0\}$, we obtain the desired conclusion.\newline
For the case of critical dimension $N=6$, we arrive at the existence of a positive ground state $(\tilde{u},\tilde{v})$ of  \eqref{system:SKdV}, by using Lemma~\ref{lemcritic} instead. On the other hand, for $\nu>0$ small enough, Lemma~\ref{lemcritic2} provides the conclusion. 
\end{proof}


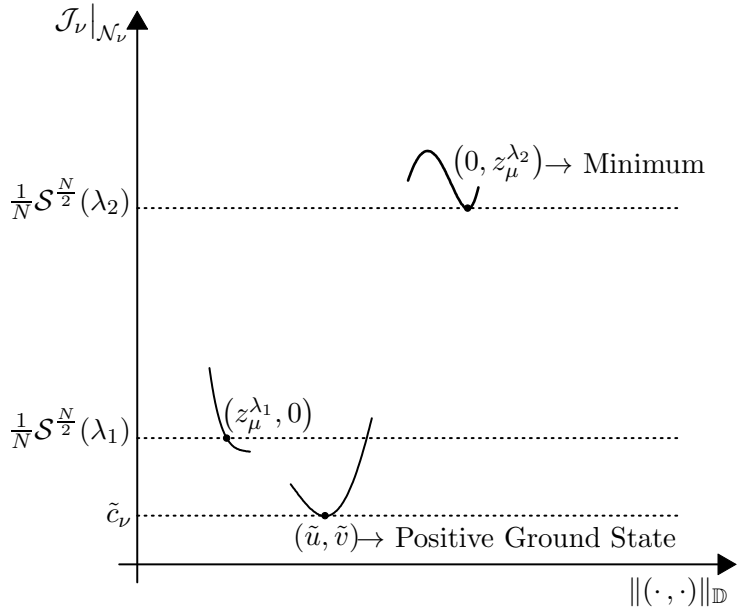
\begin{figure}[b]
\begin{tikzpicture}[scale=1.2,line cap=round,line join=round,>=triangle 45,x=1cm,y=1cm]
\draw [line width=0.75pt] (0,-0.2)-- (0,6);
\draw [line width=0.75pt] (-0.2,0)-- (6.5,0);
\draw [fill=black,shift={(0,6)}] (0,0) ++(0 pt,3.75pt) -- ++(3.2475952641916446pt,-5.625pt)--++(-6.495190528383289pt,0 pt) -- ++(3.2475952641916446pt,5.625pt);
\draw [fill=black,shift={(6.5,0)},rotate=270] (0,0) ++(0 pt,3.75pt) -- ++(3.2475952641916446pt,-5.625pt)--++(-6.495190528383289pt,0 pt) -- ++(3.2475952641916446pt,5.625pt);
\draw[line width=0.75pt,smooth,samples=100,domain=0.8:1.25] plot(\x,{0.6568838293036278*(\x)^(6)-9.274898053426458*(\x)^(5)+51.66536946561714*(\x)^(4)-144.44701525508654*(\x)^(3)+213.94517328424803*(\x)^(2)-160.37490356950588*(\x)+49.213535512197545});
\draw[line width=0.75pt,smooth,samples=100,domain=1.7:2.6] plot(\x,{0.6568838293036278*(\x)^(6)-9.274898053426458*(\x)^(5)+51.66536946561714*(\x)^(4)-144.44701525508654*(\x)^(3)+213.94517328424803*(\x)^(2)-160.37490356950588*(\x)+49.213535512197545});
\draw[line width=1pt,smooth,samples=100,domain=3:3.78] plot(\x,{0.7687058160669185*(\x)^(6)-9.837471556437674*(\x)^(5)+48.667007330076935*(\x)^(4)-117.3768839399743*(\x)^(3)+144.26776095512017*(\x)^(2)-86.0790448192908*(\x)+21.350550835980123});

\draw [line width=0.75pt,dotted] (0,3.95)-- (6,3.95);
\draw [fill=black] (3.66,3.95) circle (1pt);

\draw[color=black] (4,4.45) node {$\left(0,z_{\mu}^{\lambda_2}\right)$};
\draw[color=black] (5.4,4.45) node {$\rightarrow$ Minimum};
\draw[color=black] (-0.75,4.05) node {$\frac{1}{N}\mathcal{S}^{\frac{N}{2}}(\lambda_2)$};
\draw [line width=0.75pt,dotted] (0,1.4)-- (6,1.4);
\draw [fill=black] (0.99,1.4) circle (1pt);
\draw[color=black] (1.45,1.65) node {$\left(z_{\mu}^{\lambda_1},0\right)$};
\draw[color=black] (-0.75,1.51) node {$\frac{1}{N}\mathcal{S}^{\frac{N}{2}}(\lambda_1)$};
\draw [line width=0.75pt,dotted] (0,0.54)-- (6,0.54);
\draw [fill=black] (2.08,0.54) circle (1pt);
\draw[color=black] (2.1,0.3) node {$(\tilde{u},\tilde{v})$};
\draw[color=black] (4.2,0.3) node {$\rightarrow$ Positive Ground State};
\draw[color=black] (-0.2,0.55) node {$\tilde{c}_{\nu}$};
\draw[color=black] (6,-0.3) node {$\|(\cdot \, ,\cdot)\|_{\mathbb{D}}$};
\draw[color=black] (-0.5,6) node {$\mathcal{J}_{\nu}{\big|}_{\mathcal{N}_{\nu}}$};
\end{tikzpicture}
\caption{The energy configuration under hypotheses of Theorem~\ref{thm:lambdaground}}
\end{figure}


Next, we focus on the case that $0<\nu<\overline{\nu}$. In the following result, we infer that if the minimum energy level of the semi-trivial couples corresponds to the semi-trivial solution $(0,z_\mu^{\lambda_2})$, i.e. $\lambda_2>\lambda_1$, it is indeed a ground state to \eqref{system:SKdV} for $\nu$ sufficiently small.

\begin{theorem}\label{thm:groundstates}
Assume $\lambda_2>\lambda_1$. If either \eqref{alternative} or \eqref{alternative2} holds, then there exists $\tilde{\nu}>0$ such that for any $0<\nu<\tilde{\nu}$  the couples $(0,\pm z_\mu^{\lambda_2})$ are critical points of minimal energy for $\mathcal{J}_\nu$ on $\mathcal{N}_\nu$.
Even more, $(0,z_\mu^{\lambda_2})$ is a ground state to \eqref{system:SKdV}.
\end{theorem}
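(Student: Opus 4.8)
The plan is to pin down the ground state level $\tilde{c}_\nu$ directly as $\frac{1}{N}\mathcal{S}^{\frac{N}{2}}(\lambda_2)$ and to exhibit $(0,z_\mu^{\lambda_2})$ itself as a minimizer, without invoking any new compactness argument. First I would record the upper bound: by \eqref{Jzeta} the couple $(0,z_\mu^{\lambda_2})\in\mathcal{N}_\nu$ is a critical point with $\mathcal{J}_\nu(0,z_\mu^{\lambda_2})=\frac{1}{N}\mathcal{S}^{\frac{N}{2}}(\lambda_2)$, and since $\lambda_2>\lambda_1$ forces $\mathcal{S}(\lambda_2)<\mathcal{S}(\lambda_1)$ (because $\mathcal{S}(\cdot)$ is decreasing), this is the smaller of the two semi-trivial energies, so $\tilde{c}_\nu\le\frac{1}{N}\mathcal{S}^{\frac{N}{2}}(\lambda_2)$. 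Choosing $\tilde{\nu}\le\overline{\nu}$ also keeps $(0,z_\mu^{\lambda_2})$ a local minimum on $\mathcal{N}_\nu$ by Proposition~\ref{thm:semitrivial}. It then remains to establish the matching lower bound for small $\nu$.

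The core step is to show that, for $\nu$ small, every non-negative critical point $(u,v)$ of $\mathcal{J}_\nu$ with both $u\not\equiv0$ and $v\not\equiv0$ satisfies $\mathcal{J}_\nu(u,v)>\frac{1}{N}\mathcal{S}^{\frac{N}{2}}(\lambda_2)$. Arguing by contradiction, suppose such a $(u,v)$ has $\mathcal{J}_\nu(u,v)\le\frac{1}{N}\mathcal{S}^{\frac{N}{2}}(\lambda_2)$. Since $(u,v)\in\mathcal{N}_\nu$ and $h\ge0$, all terms in \eqref{Nnueq2} are non-negative; writing $\sigma_1=\int_{\mathbb{R}^N}u^{2^*}$, $\sigma_2=\int_{\mathbb{R}^N}v^{2^*}$, this gives in particular $\frac{1}{N}\sigma_2\le\mathcal{J}_\nu(u,v)\le\frac{1}{N}\mathcal{S}^{\frac{N}{2}}(\lambda_2)$, i.e. the a priori bound $\sigma_2\le\mathcal{S}^{\frac{N}{2}}(\lambda_2)$. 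Testing the first equation of \eqref{system:SKdV} against $u$ yields $\|u\|_{\lambda_1}^2=\sigma_1+2\nu\int_{\mathbb{R}^N}h\,u^2v$, and combining the Sobolev--Hardy bound $\mathcal{S}(\lambda_1)\sigma_1^{2/2^*}\le\|u\|_{\lambda_1}^2$ from \eqref{Slambda} with the H\"older estimate \eqref{Holder} and the bound on $\sigma_2$, I obtain
\begin{equation*}
\mathcal{S}(\lambda_1)\,\sigma_1^{\frac{2}{2^*}}\le\sigma_1+B\nu\,\sigma_1^{\frac{2}{2^*}},\qquad B:=2\|h\|_{L^\infty(\mathbb{R}^N)}\,\mathcal{S}^{\frac{N}{2}\cdot\frac{1}{2^*}}(\lambda_2).
\end{equation*}

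Since $\frac{2}{2^*}=\frac{N-2}{N}$, the number $\sigma_1>0$ satisfies the defining inequality of the set $\Sigma_\nu$ in Lemma~\ref{algelemma} with $A=\mathcal{S}(\lambda_1)$ and $\gamma=2$ (exactly as in \eqref{eqlemmaPS14}). Fixing $\varepsilon>0$ small enough that $(1-\varepsilon)\mathcal{S}^{\frac{N}{2}}(\lambda_1)>\mathcal{S}^{\frac{N}{2}}(\lambda_2)$, which is possible precisely because $\lambda_1<\lambda_2$ makes $\mathcal{S}^{\frac{N}{2}}(\lambda_1)>\mathcal{S}^{\frac{N}{2}}(\lambda_2)$, Lemma~\ref{algelemma} supplies $\tilde{\nu}>0$ so that $\sigma_1\ge\inf_{\Sigma_\nu}\sigma>(1-\varepsilon)\mathcal{S}^{\frac{N}{2}}(\lambda_1)>\mathcal{S}^{\frac{N}{2}}(\lambda_2)$ for all $0<\nu<\tilde{\nu}$. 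Then $\mathcal{J}_\nu(u,v)\ge\frac{1}{N}\sigma_1>\frac{1}{N}\mathcal{S}^{\frac{N}{2}}(\lambda_2)$, contradicting the assumption. This is the heart of the matter, and the place where both the smallness of $\nu$ and the ordering $\lambda_1<\lambda_2$ are indispensable.

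To finish, I would observe that the only non-negative semi-trivial critical points are the couples $(0,z_\mu^{\lambda_2})$: a critical point with $v\equiv0$ forces $u\equiv0$ through the second equation, while a critical point with $u\equiv0$ makes $v$ a non-negative solution of \eqref{entire}, hence $v=z_\mu^{\lambda_2}$ for some $\mu>0$ by Terracini's classification \eqref{zeta}, all of energy $\frac{1}{N}\mathcal{S}^{\frac{N}{2}}(\lambda_2)$ by \eqref{Jzeta}. Together with the previous step, the minimum in \eqref{ctilde} equals $\frac{1}{N}\mathcal{S}^{\frac{N}{2}}(\lambda_2)$ and is attained at $(0,z_\mu^{\lambda_2})$, so this couple is a ground state. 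Finally, since $\mathcal{J}_\nu(0,v)$ depends on $v$ only through $|v|$, the pair $(0,-z_\mu^{\lambda_2})$ is likewise a critical point of the same minimal energy, which yields the full statement. The main obstacle is this lower bound: one must rule out low-energy genuinely coupled solutions, and the only available leverage is the critical-growth ``energy quantization from below'' encoded in Lemma~\ref{algelemma}, whose use hinges on the a priori control $\sigma_2\le\mathcal{S}^{\frac{N}{2}}(\lambda_2)$ to fix the constant $B$.
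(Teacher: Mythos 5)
Your algebraic core is exactly the computation the paper performs: the Nehari identity \eqref{Nnueq2}, testing the first equation with $u$, H\"older's inequality combined with the a priori bound $\sigma_2\le\mathcal{S}^{\frac{N}{2}}(\lambda_2)$, and Lemma~\ref{algelemma} with $A=\mathcal{S}(\lambda_1)$, $\gamma=2$ (compare the paper's \eqref{groundstates3}--\eqref{groundstates6}, which use the same constant $B$). That part is correct, and together with your classification of non-negative semi-trivial critical points it does prove the second assertion of the theorem, namely that $(0,z_\mu^{\lambda_2})$ is a ground state in the sense of Definition~\ref{bound-ground}, i.e.\ of minimal energy among \emph{non-negative} non-trivial critical points. (One cosmetic repair: your inequality for $\sigma_1$ is non-strict, while the set $\Sigma_\nu$ of Lemma~\ref{algelemma} is defined by a strict one; since $h>0$ and a non-negative fully non-trivial critical point is positive, the coupling integral is strictly positive and \eqref{Nnueq2} gives $\sigma_1+\sigma_2<\mathcal{S}^{\frac{N}{2}}(\lambda_2)$ strictly, so this is harmless.)

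The genuine gap concerns the first --- and stronger --- assertion, which is the form the paper proves and later uses: that $(0,\pm z_\mu^{\lambda_2})$ minimize $\mathcal{J}_\nu$ \emph{on the whole Nehari manifold}, i.e.\ $\inf_{\mathcal{N}_\nu}\mathcal{J}_\nu=\frac{1}{N}\mathcal{S}^{\frac{N}{2}}(\lambda_2)$ (the paper's proof ends with ``$(0,\pm z_{\mu}^{\lambda_2})$ is the minimizer of $\mathcal{J}_\nu$ in $\mathcal{N}_\nu$'', and Case 1 of Lemma~\ref{lemmaPS1} draws its contradiction precisely from $\inf_{(u,v)\in\mathcal{N}_\nu}\mathcal{J}_\nu(u,v)<\frac{1}{N}\mathcal{S}^{\frac{N}{2}}(\lambda_2)$). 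Your argument quantifies only over non-negative critical points, so it leaves open two scenarios: (a) the infimum of $\mathcal{J}_\nu$ over $\mathcal{N}_\nu$ could be strictly below $\frac{1}{N}\mathcal{S}^{\frac{N}{2}}(\lambda_2)$ without being attained, since points of $\mathcal{N}_\nu$ need not be critical points; and (b) a sign-changing critical point could have lower energy --- note that for sign-changing $v$ the coupling term in \eqref{Nnueq2} may be negative, so your a priori bound $\sigma_2\le N\mathcal{J}_\nu(u,v)$ breaks down and the algebraic step no longer applies. Closing (a) and (b) is exactly where the paper needs compactness: if $\inf_{\mathcal{N}_\nu}\mathcal{J}_\nu$ were below $\frac{1}{N}\mathcal{S}^{\frac{N}{2}}(\lambda_2)$, this level lies under the PS threshold of Lemma~\ref{lemmaPS2} (resp.\ Lemmas~\ref{lemcritic}, \ref{lemcritic2} when $N=6$), hence the infimum is attained; by \eqref{minimumlevel2} the minimizer can be taken non-negative and it must be fully non-trivial, and only then does the algebraic argument (your core step) produce the contradiction. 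The tell-tale symptom is that your proof never invokes hypotheses \eqref{alternative} or \eqref{alternative2}: they are in the statement precisely to guarantee this compactness, so a proof that does not use them cannot be proving the full claim.
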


\begin{proof}
Let us suppose by contradiction that there exists a sequence $\nu_n \searrow 0$ whose energy level satisfies  $\tilde{c}_{\nu_n} <  \mathcal{J}_{\nu_n} (0,z_\mu^{\lambda_2})$, where $\tilde{c}_{\nu_n}$ defined in \eqref{ctilde} with $\nu=\nu_n$. Moreover, by the assumption $\lambda_2>\lambda_1$, we have
\beq\label{groundstates1}
\tilde{c}_{\nu_n}< \frac{1}{N} \min \{ \mathcal{S}(\lambda_1), \mathcal{S}(\lambda_2) \}^{\frac{N}{2}}= \frac{1}{N} \mathcal{S}^{\frac{N}{2}}(\lambda_2).
\eeq
If $3\le N\le 5$, the PS condition is satisfied at level $\tilde{c}_{\nu_n}$, thanks to Lemma~\ref{lemmaPS2}. If $N=6$, the compactness follows from Lemmas~\ref{lemcritic} and \ref{lemcritic2}.  Thus, we derive the existence of $(\tilde{u}_n,\tilde{v}_n) \in \mathbb{D}$ with $\tilde{c}_{\nu_n}=\mathcal{J}_{\nu_n} (\tilde{u}_n,\tilde{v}_n)$. By \eqref{minimumlevel2}, one can suppose that $\tilde{u}_n \ge 0$ and $\tilde{v}_n \ge 0$. Furthermore, by contradiction, we infer that $\tilde{u}_n \not \equiv 0$ and $\tilde{v}_n \not \equiv 0$ in $\mathbb{R}^N$. Finally, one can conclude that $\tilde{u}_n > 0$ and $\tilde{v}_n > 0$ in $\mathbb{R}^N\setminus \{0\}$ by applying the maximum principle.\newline 
Let us define
\begin{equation*}
\sigma_{1,n}=\int_{\mathbb{R}^N} \tilde{u}_n^{2^*} \, dx \qquad \mbox{ and } \qquad \sigma_{2,n}=\int_{\mathbb{R}^N} \tilde{v}_n^{2^*} \, dx .
\end{equation*}
By \eqref{Nnueq2}, one obtains
\beq\label{groundstates3}
\tilde{c}_{\nu_n} = \mathcal{J}_{\nu_n} (\tilde{u}_n,\tilde{v}_n) =  \frac{1}{N} \left(  \sigma_{1,n} + \sigma_{2,n}\right) + \frac{\nu_n}{2}  \int_{\mathbb{R}^N} h(x)  \,  \tilde{u}_n^{2} \,  \tilde{v}_n \, dx  .
\eeq
Combining \eqref{groundstates1} and \eqref{groundstates3}, we deduce that
\beq\label{groundstates4}
\sigma_{1,n}+\sigma_{2,n}<\mathcal{S}^{\frac{N}{2}}(\lambda_2).
\eeq

Now use that $\tilde{u}_n$ and $\tilde{v}_n$ satisfy \eqref{system:SKdV}. From the first equation of \eqref{system:SKdV} and \eqref{Slambda}, we get
\beq\label{groundstates45}
\mathcal{S}(\lambda_1) (\sigma_{1,n})^{\frac{N-2}{N}} \leq \sigma_{1,n} + 2 \nu_n  \int_{\mathbb{R}^N} h(x)  \,  \tilde{u}_n^{2} \,  \tilde{v}_n \, dx  .
\eeq
Hence, applying H\"older's inequality and \eqref{groundstates4}, it follows that
$$
\int_{\mathbb{R}^N} h(x)  \,  \tilde{u}_n^{2} \,  \tilde{v}_n \, dx    \leq \|h\|_{L^\infty} \left( \int_{\mathbb{R}^N} \tilde{u}_n^{2^*}  \, dx   \right)^{\frac{2}{2^*}}\left( \int_{\mathbb{R}^N} \tilde{v}_n^{2^*}  \, dx    \right)^{\frac{1}{2^*}} \leq { \|h\|_{L^\infty} (\mathcal{S}(\lambda_2))^{\frac{N-2}{4}} (\sigma_{1,n})^{\frac{N-2}{N}}.}
$$
 Introducing the above inequality in \eqref{groundstates45}, it follows that
\begin{equation*}
\mathcal{S}(\lambda_1) (\sigma_{1,n})^{\frac{N-2}{N}} < \sigma_{1,n} + 2\nu_n   C(h) (\mathcal{S}(\lambda_2))^{\frac{N-2}{4}} (\sigma_{1,n})^{\frac{N-2}{N}}.
\end{equation*}
As $\lambda_2> \lambda_1$, there exists $\varepsilon>0$ such that
\beq\label{groundstates6}
(1-\varepsilon) \mathcal{S}^{\frac{N}{2}}(\lambda_1) \ge \mathcal{S}^{\frac{N}{2}}(\lambda_2).
\eeq
Next, we apply Lemma~\ref{algelemma} to $\sigma_{1,n}$ and we deduce the existence of $\tilde{\nu}=\tilde{\nu}(\varepsilon)>0$ such that
$$
\sigma_{1,n}> (1-\varepsilon) \mathcal{S}^{\frac{N}{2}}(\lambda_1) \qquad \mbox{ for any } 0<\nu_n<\tilde{\nu}.
$$
Since parameter $\varepsilon$ satisfies \eqref{groundstates6}, it follows that $\sigma_{1,n}>\mathcal{S}^{\frac{N}{2}}(\lambda_2)$, in contradiction with \eqref{groundstates4}. Thus, for $\nu$ small enough,
$$
\tilde{c}_\nu = { \frac{1}{N} \mathcal{S}^{\frac{N}{2}}(\lambda_2).}
$$
If $(\tilde{u},\tilde{v})$ is a minimizer of $\mathcal{J}_\nu$, repeating the above argument, it follows that $\tilde{u}\equiv 0$. In addition, $\tilde{v}$ solves to
$$
-\Delta \tilde{v} - \lambda_2 \frac{\tilde{v}}{|x|^2}=|\tilde{v}|^{2^*-2}\tilde{v} \qquad \mbox{ in } \mathbb{R}^N.
$$
We prove now that $\tilde{v}$ does not change its sign and, actually, $\tilde{v}= \pm z_{\mu}^{\lambda_2}$. Arguing by contradiction, we shall suppose that $\tilde{v}$ is sign-changing. Then, $\tilde{v}^{\pm} \not \equiv 0$ in $\mathbb{R}^N$. Due to $(0,\tilde{v}) \in \mathcal{N}_\nu$, one obtains $(0,\tilde{v}^\pm) \in \mathcal{N}_\nu$. By using the equality \eqref{groundstates3}, one gets
$$
\tilde{c}_{\nu}= \mathcal{J}_\nu (0,\tilde{v}) = \frac{1}{N} \int_{\mathbb{R}^N} { |\tilde{v}|}^{2^*}  \, dx  = \frac{1}{N} \left( \int_{\mathbb{R}^N} (\tilde{v}^+)^{2^*}  \, dx  + \int_{\mathbb{R}^N}  {|\tilde{v}^-|}^{2^*}  \, dx  \right) > \mathcal{J}_\nu (0,\tilde{v}^+) \ge  \tilde{c}_{\nu},
$$
contradicting the fact that the energy of $(0,\tilde{v})$ is minimum. Hence, $(0,\pm z_{\mu}^{\lambda_2})$ is the minimizer of $\mathcal{J}_\nu$ in $\mathcal{N}_\nu$ if $\lambda_2>\lambda_1$. Furthermore, the ground state to \eqref{system:SKdV} corresponds to $(0,z_{\mu}^{\lambda_2})$.
\end{proof}
\begin{remark}
If $\lambda_2-\lambda_1$ increases, the interval for admissible $\nu$ in Theorem~\ref{thm:groundstates} increases. Indeed, the greater the difference $\lambda_2-\lambda_1$, the greater the range of $\varepsilon$ whose satisfies \eqref{groundstates6}. Consequently, we can choose a bigger $\tilde{\nu}$ in Lemma~\ref{algelemma}.
\end{remark}

Finally, we deduce the existence of bound states by applying a min-max argument. In particular, it is proved that the energy functional $\mathcal{J}^+_\nu$, presented in \eqref{funct:SKdVp}, exhibits the \textit{Mountain-Pass} geometry for certain choice of parameters $\lambda_1,\lambda_2$. This assumption, a kind of separability condition, allows us to establish a proper separation between the semi-trivial energy levels. In Figure~3, we can see the couple $(0,z_{\mu}^{\lambda_2})$ as a ground state, provided by Theorem~\ref{thm:groundstates}, and the bound state provided by the following theorem.

\begin{theorem}\label{thm:MPgeom}
Assume that $\lambda_2> \lambda_1$ and
\beq\label{lamdas}
2^{-\frac{2}{N-1}}<\frac{\Lambda_N-\lambda_2}{\Lambda_N-\lambda_1}.
\eeq
If \eqref{alternative} holds, then there exists  $\tilde{\nu}>0$ such that, for $0<\nu\le\tilde{\nu}$, $\mathcal{J}^+_\nu\Big|_{\mathcal{N}^+_\nu}$ admits a \it{Mountain-Pass} critical point $(\tilde{u},\tilde{v})\in\mathbb{D}$ which is a positive bound state to \eqref{system:SKdV}.
\end{theorem}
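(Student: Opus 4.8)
The plan is to realize the bound state as a mountain--pass critical point of $\mathcal{J}^+_\nu$ constrained on $\mathcal{N}^+_\nu$, the two valleys being the semi-trivial solution $(0,z_\mu^{\lambda_2})$ and the non-critical couple $(z_\mu^{\lambda_1},0)$. Write $m_i=\frac1N\mathcal{S}^{\frac N2}(\lambda_i)$, so that $\mathcal{J}_\nu(0,z_\mu^{\lambda_2})=m_2$ and $\mathcal{J}_\nu(z_\mu^{\lambda_1},0)=m_1$ by \eqref{Jzeta}; the hypothesis $\lambda_2>\lambda_1$ and the monotonicity of $\mathcal{S}(\cdot)$ in \eqref{Slambda} give $m_2<m_1$, so $(0,z_\mu^{\lambda_2})$ is the lower valley. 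Since we take $\nu\le\tilde\nu\le\overline\nu$, Proposition~\ref{thm:semitrivial}(i) makes $(0,z_\mu^{\lambda_2})$ a \emph{strict} local minimum of $\mathcal{J}^+_\nu\big|_{\mathcal{N}^+_\nu}$, while $(z_\mu^{\lambda_1},0)\in\mathcal{N}_\nu\cap\mathcal{N}^+_\nu$ by Remark~\ref{zeta1Nehari}. I would then set
\[
c_\nu=\inf_{\gamma\in\Gamma}\ \max_{s\in[0,1]}\ \mathcal{J}^+_\nu(\gamma(s)),\qquad
\Gamma=\bigl\{\gamma\in C([0,1],\mathcal{N}^+_\nu):\gamma(0)=(0,z_\mu^{\lambda_2}),\ \gamma(1)=(z_\mu^{\lambda_1},0)\bigr\}.
\]

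For the \emph{upper bound} I would test $\Gamma$ with the explicit competitor $\gamma_0(s)=\bigl(s\,z_\mu^{\lambda_1},(1-s)z_\mu^{\lambda_2}\bigr)$ projected radially onto $\mathcal{N}^+_\nu$, and evaluate $\max_s\mathcal{J}^+_\nu$ along it. In the decoupled limit $\nu=0$ one checks, using $\|z_\mu^{\lambda_i}\|^2_{\lambda_i}=\int(z_\mu^{\lambda_i})^{2^*}=\mathcal{S}^{\frac N2}(\lambda_i)$ and \eqref{Nnueq2}, that this maximum equals exactly $m_1+m_2$ (attained at the crossover where both bubbles are switched on). Turning on the coupling forces a smaller Nehari scaling at that crossover, and a first order computation shows the energy there drops by $\nu\!\int h\,(z_\mu^{\lambda_1})^2 z_\mu^{\lambda_2}\,dx+o(\nu)>0$; since away from the crossover the decoupled energy is strictly below $m_1+m_2$, for all small $\nu$ the whole path stays strictly below $m_1+m_2$. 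Hence $c_\nu<m_1+m_2=\frac1N\bigl(\mathcal{S}^{\frac N2}(\lambda_1)+\mathcal{S}^{\frac N2}(\lambda_2)\bigr)$, which is the upper constraint in \eqref{PS1}.

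The \emph{lower bound} is where \eqref{lamdas} enters. Via \eqref{Slambda}, condition \eqref{lamdas} is equivalent to $\mathcal{S}^{\frac N2}(\lambda_1)<2\,\mathcal{S}^{\frac N2}(\lambda_2)$, i.e. $m_1<2m_2$, so that $2m_2<m_1+m_2<3m_2$. I would argue a barrier: every $\gamma\in\Gamma$ must pass through a configuration simultaneously carrying a full $\lambda_1$-- and a full $\lambda_2$--concentration, whose energy is at least $m_1+m_2-O(\nu)$; therefore $c_\nu\ge m_1+m_2-O(\nu)$, which for $\nu$ small keeps $c_\nu$ above $2m_2$ precisely because $m_1+m_2>2m_2$. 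Combined with the upper bound this pins $c_\nu$ in the window $(2m_2,\,m_1+m_2)\subset(m_2,m_1+m_2)$, giving \eqref{PS1}; and since $m_1+m_2<3m_2$, this window contains \emph{no} multiple $\ell m_2$, so \eqref{PS2} holds automatically at the level $c_\nu$. In particular $c_\nu>m_1=\max\{\mathcal{J}^+_\nu(\gamma(0)),\mathcal{J}^+_\nu(\gamma(1))\}$, so the min--max value is a genuine mountain--pass level above both endpoints.

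With $c_\nu$ localized, I would invoke the compactness of Lemma~\ref{lemmaPS1} (valid under \eqref{alternative}, for $0<\nu\le\tilde\nu$, and assuming \eqref{PS0}): a PS sequence at level $c_\nu$, obtained by the min--max principle on the complete $C^1$ manifold $\mathcal{N}^+_\nu$ and turned into a free PS sequence via the analogue of Lemma~\ref{critical Constrained} and \eqref{eq:critt}, converges up to subsequence to a critical point $(\tilde u,\tilde v)$ of $\mathcal{J}^+_\nu$ with $\mathcal{J}^+_\nu(\tilde u,\tilde v)=c_\nu$. Testing with the negative parts as in the opening of Lemma~\ref{lemmaPS1} gives $\tilde u^-=\tilde v^-=0$, whence $\tilde u,\tilde v\ge0$ solve \eqref{systemp}, and the strong maximum principle in $\mathbb{R}^N\setminus\{0\}$ yields strict positivity, so $(\tilde u,\tilde v)$ solves \eqref{system:SKdV}. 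Finally $(\tilde u,\tilde v)$ is a true bound state and not semi-trivial: its energy $c_\nu$ is strictly above $m_2$ and different from $m_1$, so it cannot be $(0,z_\mu^{\lambda_2})$, while $(z_\mu^{\lambda_1},0)$ is not a solution at all; hence both components are nontrivial. The main obstacle is exactly the two--sided estimate $2m_2<c_\nu<m_1+m_2$: the upper bound hinges on the correct sign of the coupling correction along the competitor path, and the lower bound requires the barrier argument certifying that no connecting path can bring the maximal energy below $m_1+m_2-O(\nu)$, this being the step in which the separation hypothesis \eqref{lamdas} is indispensable.
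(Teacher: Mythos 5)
Your proposal follows the same architecture as the paper's proof: a mountain-pass level for $\mathcal{J}^+_\nu$ on $\mathcal{N}^+_\nu$ joining $(0,z_\mu^{\lambda_2})$ and $(z_\mu^{\lambda_1},0)$, an upper bound $c_\nu<\frac1N\bigl(\mathcal{S}^{\frac N2}(\lambda_1)+\mathcal{S}^{\frac N2}(\lambda_2)\bigr)$ obtained from the explicit interpolating path projected onto the Nehari manifold, placement of $c_\nu$ in a window compatible with \eqref{PS1}--\eqref{PS2}, then Lemma~\ref{lemmaPS1} for compactness and the maximum principle for positivity. Your reading of \eqref{lamdas} as $\mathcal{S}^{\frac N2}(\lambda_1)<2\,\mathcal{S}^{\frac N2}(\lambda_2)$ is correct, and your upper-bound argument is sound: it is a first-order-in-$\nu$ variant of the paper's observation that the strictly positive coupling term lowers the Nehari scaling (cf. \eqref{gammabound}), and both versions yield strictness at the crossover $t=\tfrac12$ where the decoupled maximum $m_1+m_2$ would be attained.

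The genuine gap is the lower bound, your ``barrier'' step, which is precisely where the paper does its real work. You assert that every admissible path ``must pass through a configuration simultaneously carrying a full $\lambda_1$-- and a full $\lambda_2$--concentration, whose energy is at least $m_1+m_2-O(\nu)$,'' but you offer no mechanism, and the concentration heuristic is not one: intermediate points of an arbitrary path in $\mathcal{N}^+_\nu$ need not concentrate at all, and concentration--compactness says nothing about a single fixed pair $(u,v)$. The obstruction is algebraic, and it is exactly the content of the paper's Claim \eqref{claim1}: setting $\sigma_i(t)=\int_{\R^N}(\psi_i^+(t))^{2^*}dx$, the intermediate value theorem produces $\tilde t$ with $\sigma_1(\tilde t)=\sigma_2(\tilde t)=\tilde\sigma$; at that point the Nehari identity \eqref{MPgeomp1}, the Sobolev--Hardy quotient \eqref{Slambda}, and H\"older's inequality on the coupling term give $(\mathcal{S}(\lambda_1)+\mathcal{S}(\lambda_2))\,\tilde\sigma^{\frac{N-2}{N}}\le 2\tilde\sigma+3\nu C\,\tilde\sigma^{\frac32\frac{N-2}{N}}$, and Lemma~\ref{algelemma} then forces $\tilde\sigma\ge(1-\varepsilon)\bigl(\frac{\mathcal{S}(\lambda_1)+\mathcal{S}(\lambda_2)}{2}\bigr)^{\frac N2}$ uniformly for $\nu$ small, whence $c_{MP}\ge\frac{2(1-\varepsilon)}{N}\bigl(\frac{\mathcal{S}(\lambda_1)+\mathcal{S}(\lambda_2)}{2}\bigr)^{\frac N2}>\frac2N\mathcal{S}^{\frac N2}(\lambda_2)$. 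That strict bound above $2m_2$ is what, combined with $c_{MP}<m_1+m_2<3m_2$ (here \eqref{lamdas} enters), rules out every multiple $\ell m_2$ and legitimizes the use of Lemma~\ref{lemmaPS1}. Your stronger claim $c_\nu\ge m_1+m_2-O(\nu)$ is in fact true --- it can be proved by the same scheme, making the path cross the ray $\sigma_1/\sigma_2=\mathcal{S}^{\frac N2}(\lambda_1)/\mathcal{S}^{\frac N2}(\lambda_2)$ rather than the diagonal --- but as written your proposal contains no proof of any quantitative lower bound, and without it the level $c_\nu$ is not pinned above $2m_2$, so \eqref{PS2} may fail and the whole compactness step collapses. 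This intermediate-value-plus-Lemma~\ref{algelemma} argument is the missing idea and must be supplied.
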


\begin{proof}
 The proof is divided into two steps. In the first one, we prove that the energy functional $\mathcal{J}_\nu^+$ admits the { \it Mountain-pass} geometry, whereas in the second one we prove that for the {\it Mountain-pass} level the PS condition is guaranteed. As a consequence, there exists a critical point $(\tilde{u},\tilde{v})\in\mathbb{D}$ of $\mathcal{J}_\nu^+$.

First, let us define the set of paths connecting $(z_{\mu}^{\lambda_1},0)$ with $(0,z_{\mu}^{\lambda_2})$ continuously,
$$
\Psi_\nu = \left\{ \psi=(\psi_1,\psi_2)\in C^0([0,1],\mathcal{N}^+_\nu), \quad  \, \psi(0)=(z_1^{\lambda_1},0) \, \mbox{ s. t. } \mbox{ and } \, \psi(1)=(0,z_1^{\lambda_2})\right\},
$$
and the MP level
\begin{equation*}
c_{MP} = \inf_{\psi\in\Psi_\nu} \max_{t\in [0,1]} \mathcal{J}^+_{\nu} (\psi(t)).
\end{equation*}
The hypothesis \eqref{lamdas} implies that
$$
\frac{2}{N} \mathcal{S}^{\frac N2}(\lambda_2) > \frac{1}{N} \mathcal{S}^{\frac N2}(\lambda_1).
$$
Due to the continuity and monotonicity of $\mathcal{S}(\lambda)$, one can fix $\varepsilon>0$ small enough with
\beq\label{MPgeomp0}
\frac{2}{N}(1-\varepsilon)\left( \frac{\mathcal{S}(\lambda_1)+\mathcal{S}(\lambda_2)}{2}\right)^{\frac{N}{2}}> \frac{2}{N} \mathcal{S}^{\frac{N}{2}}(\lambda_2)>\frac{1+\varepsilon}{N} \mathcal{S}^{\frac{N}{2}}(\lambda_1).
\eeq

\

\textbf{Claim}: There exists $\tilde{\nu}=\tilde{\nu}(\varepsilon)>0$ such that, for any $0<\nu<\tilde{\nu}$, we have
\beq\label{claim1}
\max_{t\in[0,1]} \mathcal{J}_\nu^+ (\psi(t)) \ge \frac{2}{N}(1-\varepsilon)\left( \frac{\mathcal{S}(\lambda_1)+\mathcal{S}(\lambda_2)}{2}\right)^{\frac{N}{2}} \qquad \mbox{ with } \psi \in \Psi_\nu.
\eeq
Taking $\psi=(\psi_1,\psi_2) \in \Psi_\nu$, and applying \eqref{Nnueq1} to $\mathcal{J}^+_\nu$, we obtain that
\begin{align}\label{MPgeomp1}
 \int_{\mathbb{R}^N} \left( |\nabla \psi_1(t)|^2 + |\nabla \psi_2(t)|^2  \right) \, dx -\lambda_1 \int_{\mathbb{R}^N} \dfrac{\psi_1^2(t)}{|x|^2}  \, dx   - \lambda_2 \int_{\mathbb{R}^N} \dfrac{\psi_2^2(t)}{|x|^2} \, dx  \vspace{0.3cm}\\
= \int_{\mathbb{R}^N} \left( (\psi_1^+(t))^{2^*} + (\psi_2^+(t))^{2^*}  \right) \, dx  +3\nu \int_{\mathbb{R}^N} h(x) (\psi_1^+(t))^2 {\psi_2(t)} \, dx  \nonumber,
\end{align}
and, by \eqref{Nnueq} applied to $\mathcal{J}^+_\nu$,
\beq\label{MPgeomp2}
\mathcal{J}^+_{\nu} (\psi(t)) = \frac{1}{N} \left( \int_{\mathbb{R}^N} (\psi_1^+(t))^{2^*} + (\psi_2^+(t))^{2^*} \, dx \right)  + \frac{\nu}{2} \int_{\mathbb{R}^N} h(x)  \,  (\psi_1^+(t))^{2} \,  { \psi_2(t) } \, dx.
\eeq
Let us define $\sigma(t)=\left(\sigma_1(t),\sigma_2(t)\right)$ where $\displaystyle\sigma_i(t)=\int_{\mathbb{R}^N} (\psi_i^+(t))^{2^*} \, dx$ for $i=1,2$ and let us assume that $\sigma_i(t)\leq 2 \mathcal{S}^{\frac{N}{2}}(\lambda_1)$ for $t\in[0,1]$ and $i=1,2$ since, on the contrary, \eqref{claim1} is done.\newline
By using the definition of $\mathcal{S}(\lambda)$, we can pass from \eqref{MPgeomp1} to the inequality
\begin{equation}\label{MPgeomp3}
\begin{split}
\mathcal{S}(\lambda_1)(\sigma_1(t))^{\frac{N-2}{N}}+\mathcal{S}(\lambda_2)(\sigma_2(t))^{\frac{N-2}{N}}
\leq & \int_{\mathbb{R}^N} \left( |\nabla \psi_1(t)|^2 + |\nabla \psi_2(t)|^2  \right) \, dx \\
& -\lambda_1 \int_{\mathbb{R}^N} \dfrac{\psi_1^2(t)}{|x|^2} \, dx  - \lambda_2 \int_{\mathbb{R}^N} \dfrac{\psi_2^2(t)}{|x|^2} \, dx\\
= &\: \sigma_1(t)+\sigma_2(t)+3\nu \int_{\mathbb{R}^N} h(x) (\psi_1^+(t))^2 {\psi_2(t)} \, dx.
 \end{split}
 \end{equation}
Moreover, by H\"older's inequality,
\begin{equation}\label{MPgeomp4}
\int_{\mathbb{R}^N} h(x) (\psi_1^+(t))^{2} (\psi_2(t)) \, dx \leq \nu \|h\|_{L^{\infty}(\R^N)} (\sigma_1(t))^{\frac{N-2}{N}} (\sigma_2(t))^{\frac{N-2}{2N}}.
\end{equation}
and by the definition of $\psi$,
$$
\sigma(0)=\left(\int_{\mathbb{R}^N} (z_1^{\lambda_1})^{2^*} \, dx,0\right) \quad \mbox{ and } \quad \sigma(1)=\left(0,\int_{\mathbb{R}^N} (z_1^{\lambda_2})^{2^*} \, dx \right).$$
Since $\sigma$ is continuous, there exists $\tilde{t}\in(0,1)$ with $\sigma_1(\tilde{t})=\tilde{\sigma}=\sigma_2(\tilde{t})$. Combining \eqref{MPgeomp3} with $t=\tilde{t}$ and \eqref{MPgeomp4},
$$
(\mathcal{S}(\lambda_1)+\mathcal{S}(\lambda_2)) \tilde{\sigma}^{\frac{N-2}{N}} \leq 2 \tilde{\sigma} + 3 \nu \tilde{\sigma}^{\frac{3}{2}\frac{N-2}{N}}.
$$
On the other hand, by Lemma~\ref{algelemma}, there exists $\tilde{\nu}$ depending on $\varepsilon$ such that
\begin{equation}\label{MPgeomp5}
\tilde{\sigma}\ge (1-\varepsilon)  \left(\frac{\mathcal{S}(\lambda_1)+\mathcal{S}(\lambda_2)}{2}\right)^{\frac{N}{2}} \quad \mbox{ for every } 0<\nu\le\tilde{\nu}.
\end{equation}
Then, by \eqref{MPgeomp2} and \eqref{MPgeomp5}, one has 
$$
\max_{t\in[0,1]} \mathcal{J}^+_\nu(\psi(t)) \ge \frac{ \sigma_1(t)+ \sigma_2(t)}{N}  \ge {\frac{2(1-\varepsilon) }{N}} \left(\frac{\mathcal{S}(\lambda_1)+\mathcal{S}(\lambda_2)}{2}\right)^{\frac{N}{2}},
$$
proving the claim \eqref{claim1}. In addition, because of \eqref{MPgeomp0} and \eqref{claim1},  we get
\begin{equation}\label{MPgeomp6}
{c_{MP}}>\frac{(1+\varepsilon)}{N} \mathcal{S}^{\frac{N}{2}}(\lambda_1)={(1+\varepsilon)}\mathcal{J}^+_\nu(z_1^{\lambda_1},0).
\eeq
Consequently, the energy functional $\mathcal{J}^+_\nu$ has a {\it Mountain-Pass} geometry on $\mathcal{N}_\nu$.

Now we address the second step. To do so, let us consider
$$
\psi(t) =(\psi_1(t),\psi_2(t))=\left((1-t)^{1/2} z_1^{\lambda_1},t^{1/2}z_1^{\lambda_2} \right)\mbox{ for } t\in[0,1].
$$
Because of the properties of the Nehari manifold $\mathcal{N}^+_\nu$, we can deduce the existence of a positive function $\gamma:[0,1]\to(0,+\infty)$ with the $\gamma \psi \in \mathcal{N}_\nu^+$ for { $t\in[0,1]$}. We point out that $\gamma(0)=\gamma(1)=1$. As above, let us define the integral vector
$$
\sigma(t)=(\sigma_1(t),\sigma_2(t))=\left(\int_{\mathbb{R}^N} \left( \gamma \psi_1(t)\right)^{2^*} \, dx ,
\int_{\mathbb{R}^N}\left( \gamma \psi_2(t)\right)^{2^*} \, dx \right).
$$
Since $z_1^{\lambda_1}\in\mathcal{N}_1$ and  $z_2^{\lambda_1}\in\mathcal{N}_2$, { introduced in \eqref{Nnui}}, it holds
$$
\sigma_1(0)=\|z_1^{\lambda_1}\|^2_{\lambda_1}=\int_{\mathbb{R}^N} (z_1^{\lambda_1})^{2^*} = \mathcal{S}(\lambda_1), \quad \mbox{ and }\quad \sigma_2(1)=\|z_1^{\lambda_2}\|^2_{\lambda_2}=\int_{\mathbb{R}^N} (z_1^{\lambda_2})^{2^*} = \mathcal{S}(\lambda_2).
$$
Since $\gamma\psi(t)\in\mathcal{N}^+_\nu$ and \eqref{normH}, one has that
\begin{align*}
\left\|\left((1-t)^{1/2} z_1^{\lambda_1},t^{1/2}z_1^{\lambda_2} \right)\right\|^2_\mathbb{D}=& \gamma^{2^*-2}(t) \left((1-t)^{2^*/2} \sigma_1(0) + t^{2^*/2} \sigma_2(1) \right) \vspace{0.3cm} \\
& + 3 \nu \gamma (t)(1-t)t^{1/2} \int_{\mathbb{R}^N} h(x) (z_1^{\lambda_1})^2 z_1^{\lambda_2} \, dx.
\end{align*}
By the expression above, we can get an upper bound for the function $\gamma$ as follows,
\beq\label{gammabound}
\gamma^{2^*-2}(t)< \dfrac{\left|\left|\left( \psi_1(t), \psi_2(t) \right)\right|\right|^2_\mathbb{D}}{\int_{\mathbb{R}^N} (\psi_1(t))^{2^*}+(\psi_2(t))^{2^*} \, dx}= \dfrac{(1-t) \sigma_1(0) + t \sigma_2(1)}{(1-t)^{2^*/2} \sigma_1(0) + t^{2^*/2} \sigma_2(1)},
\eeq
for every $t\in(0,1)$. By the definition of $\gamma$, \eqref{gammabound} and \eqref{Nnueq}, one gets
\begin{align}\label{Jgammapsibound}
\mathcal{J}_\nu^+(\gamma \psi(t)) & = \frac{1}{6} \|\gamma\psi(t) \|^2_\mathbb{D}+ \frac{6-N}{6N} \gamma^{2^*}(t) \left(\int_{\mathbb{R}^N} (\psi_1(t))^{2^*}+(\psi_2(t))^{2^*}  \, dx \right) \vspace{0.3cm} \nonumber \\
& = \frac{\gamma^2(t)}{6} \left[(1-t) \sigma_1(0) + t \sigma_2(1) \right] + \frac{6-N}{6N} \gamma^{2^*}(t) \left[ (1-t)^{2^*/2} \sigma_1(0) + t^{2^*/2} \sigma_2(1)\right] \vspace{0.3cm} \\
& < \frac{\gamma^2(t)}{N} \left[(1-t) \sigma_1(0) + t \sigma_2(1) \right] \nonumber.
\end{align}
From \eqref{gammabound}, we have that

$$
\gamma^{2}(t)< \left[ \dfrac{(1-t) \sigma_1(0) + t \sigma_2(1)}{(1-t)^{2^*/2} \sigma_1(0) + t^{2^*/2} \sigma_2(1)} \right]^{\frac{N-2}{2}},
$$
so that, because of \eqref{Jgammapsibound}, for $0<t<1$ we have
$$
\mathcal{J}_\nu^+(\gamma \psi(t)) <  \dfrac{(1-t) \sigma_1(0) + t \sigma_2(1)}{N}  \left[ \dfrac{(1-t) \sigma_1(0) + t \sigma_2(1)}{(1-t)^{2^*/2} \sigma_1(0) + t^{2^*/2} \sigma_2(1)} \right]^{\frac{N-2}{2}}=g(t).
$$
Note that $g(t)$ attains its maximum at $t=\frac{1}{2}$ and
$$
g\left(\frac{1}{2}\right)= \dfrac{ \sigma_1(0) + \sigma_2(1)}{N}=\dfrac{\mathcal{S}^{\frac{N}{2}}(\lambda_1)+\mathcal{S}^{\frac{N}{2}}(\lambda_2)}{N}.
$$
Hence, we have established an upper bound for the {\it Mountain-pass} level $c_{MP}$. More precisely,
$$
{c_{MP} }\leq \max_{t\in[0,1]} \mathcal{J}_\nu^+(\gamma \psi(t))< \dfrac{\mathcal{S}^{\frac{N}{2}}(\lambda_1)+\mathcal{S}^{\frac{N}{2}}(\lambda_2)}{N}.
$$
Finally, introducing the separability condition, by  \eqref{lamdas} and {\eqref{MPgeomp6}}, then
$$
\frac{\mathcal{S}^{\frac{N}{2}}(\lambda_2)}{N}<\frac{\mathcal{S}^{\frac{N}{2}}(\lambda_1)}{N}<c_{MP}<\frac{1}{N}\left( \mathcal{S}^{\frac{N}{2}}(\lambda_1)+\mathcal{S}^{\frac{N}{2}}(\lambda_2) \right)<3\frac{\mathcal{S}^{\frac{N}{2}}(\lambda_2)}{N}
$$
if $\lambda_2>\lambda_1$. The previous inequality means that $c_{MP}$ satisfies the hypotheses of Lemma~\ref{lemmaPS1}. Next, by the {\it Mountain-Pass} Theorem, there exists a sequence $\left\{ (u_n,v_n) \right\} \subset \mathcal{N}^+_\nu$ such that
$$\mathcal{J}^+(u_n,v_n ) \to c_\nu\qquad \displaystyle\mathcal{J}^+|_{\mathcal{N}^+_\nu}(u_n,v_n ) \to 0.$$
Moreover, by Lemma~\ref{lemmaPS1}, $(u_n,v_n) \to ( \tilde{u},\tilde{v} )$. Indeed, $(\tilde{u},\tilde{v} )$ is a critical point  of $\mathcal{J}_\nu$ on $\mathcal{N}_\nu$. Even more, $\tilde{u},\tilde{v} \ge 0$ in $\mathbb{R}^N$. Moreover, the ground state is actually strictly positive by applying maximum principle in $\mathbb{R}^N\setminus\{ 0\}$. We obtain the same conclusion for $N=6$, using Lemma~\ref{lemcritic} for convergence of the PS sequence. 
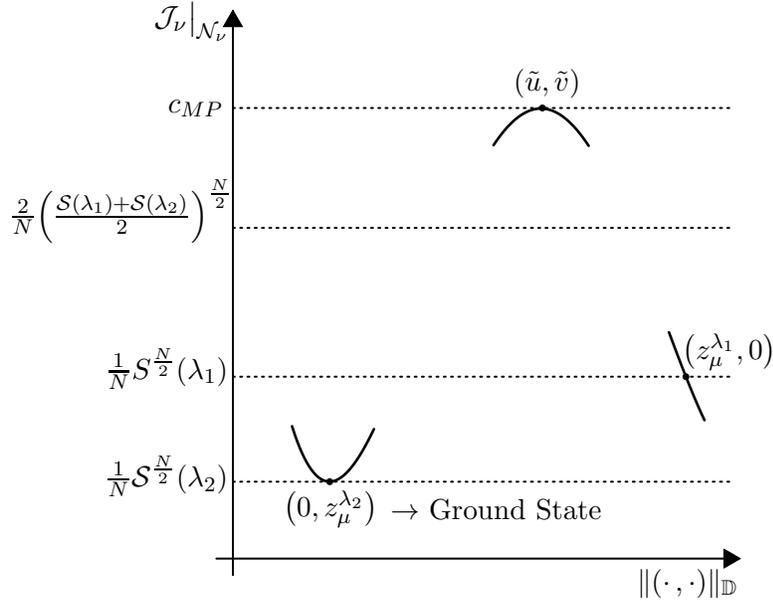
\begin{figure}[b]
\begin{tikzpicture}[scale=1.1, line cap=round,line join=round,>=triangle 45,x=1cm,y=1cm]
\draw [line width=0.75pt] (0,-0.2)-- (0,6.5);
\draw [line width=0.75pt] (-0.2,0)-- (6,0);
\draw [fill=black,shift={(0,6.5)}] (0,0) ++(0 pt,3.75pt) -- ++(3.2475952641916446pt,-5.625pt)--++(-6.495190528383289pt,0 pt) -- ++(3.2475952641916446pt,5.625pt);
\draw [fill=black,shift={(6,0)},rotate=270] (0,0) ++(0 pt,3.75pt) -- ++(3.2475952641916446pt,-5.625pt)--++(-6.495190528383289pt,0 pt) -- ++(3.2475952641916446pt,5.625pt);
\draw[line width=1pt,smooth,samples=100,domain=0.715:1.705] plot(\x,{0.00053*(\x)^(5)+0.098*(\x)^(4)-1.534*(\x)^(3)+7.318*(\x)^(2)-11.448*(\x)+6.58});
\draw[line width=1pt,smooth,samples=100,domain=3.15:4.3] plot(\x,{0.00053*(\x)^(5)+0.098*(\x)^(4)-1.534*(\x)^(3)+7.318*(\x)^(2)-11.448*(\x)+6.58});
\draw[line width=1pt,smooth,samples=100,domain=5.27:5.7] plot(\x,{0.00053*(\x)^(5)+0.098*(\x)^(4)-1.534*(\x)^(3)+7.318*(\x)^(2)-11.448*(\x)+6.58});
\draw [line width=.75pt,dotted] (0,0.93)-- (6,0.93);
\draw[fill=black] (1.17,0.93) circle (1pt);
\draw[color=black] (1.18,0.6) node {$\left(0,z_{\mu}^{\lambda_2}\right)$};
\draw[color=black] (3.18,0.6) node {$\rightarrow$  Ground State};
\draw[color=black] (-0.8,1) node {$\frac{1}{N}\mathcal{S}^{\frac{N}{2}}(\lambda_2)$};
\draw [line width=.75pt,dotted] (0,2.2)-- (6,2.2);
\draw [fill=black] (5.475,2.2) circle (1pt);
\draw[color=black] (6,2.5) node {$\left(z_{\mu}^{\lambda_1},0\right)$};
\draw[color=black] (-0.8,2.3) node {$\frac{1}{N}S^{\frac{N}{2}}(\lambda_1)$};
\draw [line width=.75pt,dotted] (0,4)-- (6,4);
\draw[color=black] (-1.35,4.2) node {$\frac{2}{N}\!\left(\frac{\mathcal{S}(\lambda_1)+\mathcal{S}(\lambda_2)}{2}\right)^{\frac{N}{2}}$};
\draw [line width=.75pt,dotted] (0,5.45)-- (6,5.45);
\draw[color=black] (-0.45,5.45) node {$c_{MP}$};
\draw [fill=black] (3.74,5.45) circle (1pt);
\draw[color=black] (3.8,5.75) node {$\left(\tilde{u},\tilde{v}\right)$};
\draw[color=black] (5.5,-0.3) node {$\|(\cdot \, ,\cdot)\|_{\mathbb{D}}$};
\draw[color=black] (-0.5,6.5) node {$\mathcal{J}_{\nu}{\big|}_{\mathcal{N}_{\nu}}$};
\end{tikzpicture}
\caption{The energy configuration given by Theorem~\ref{thm:groundstates} and Theorem~\ref{thm:MPgeom}.}
\end{figure}  
\end{proof}

\begin{center}{\bf Acknowledgements}\end{center} The authors wishes to thank B. Abdellaoui for useful discussions concerning the problem. \\ This work has been partially supported by the Madrid Government (Comunidad de Madrid-Spain) under the Multiannual Agreement with UC3M in the line of Excellence of University Professors (EPUC3M23), and in the context of the V PRICIT (Regional Programme of Research and Technological Innovation).\\ The authors are partially supported by the Ministry of Economy and Competitiveness of Spain, under research project PID2019-106122GB-I00.

\end{document}